\newcommand{\class}{\mathsf}
\newcommand{\alg}{\mathbf}
\newcommand{\lang}{\mathcal}
\newcommand{\filter}{\mathcal}
\newcommand{\ideal}{\mathcal}
\newcommand{\assign}{:=}
\newcommand{\pair}[2]{\langle #1, #2 \rangle}
\newcommand{\triple}[3]{\langle #1, #2, #3 \rangle}
\newcommand{\set}[2]{\{ #1 \mid #2 \}}
\newcommand{\modframe}{\mathcal}
\newcommand{\AdjDiamond}{\blacklozenge}
\newcommand{\AdjBox}{\rule{1.5ex}{1.5ex}}
\newcommand{\minus}{-}
\newcommand{\sqleq}{\sqsubseteq}
\newcommand{\unit}{\eta}
\newcommand{\complex}[1]{#1^{\ast}}		
\newcommand{\canframe}[1]{#1_{\bullet}}	
\newcommand{\converse}[1]{\overline{#1}}	
\theoremstyle{plain}
\newtheorem{theorem}{Theorem}[section]
\newtheorem{proposition}[theorem]{Proposition}
\newtheorem{corollary}[theorem]{Corollary}
\theoremstyle{definition}
\newtheorem{definition}[theorem]{Definition}
\newtheorem{example}[theorem]{Example}
\title{Compatibility between modal operators in distributive modal logic}
\author{Adam P\v{r}enosil}
\address{Institute of Computer Science, Czech Academy of Sciences}
\email{adam.prenosil@gmail.com}
\thanks{This research was supported by the grant 22-00137S of the Czech Science Foundation.}
\keywords{non-classical modal logic, distributive modal logic, intuitionistic modal logic, positive modal logic}
\begin{document}

\begin{abstract}
  Unlike in classical modal logic, in non-classical modal logics the box and diamond operators frequently fail to be interdefinable. Instead, these logics impose some compatibility conditions which tie the box and the diamond together and ensure that in terms of Kripke semantics they arise from the same accessibility relation. This is the case in the intuitionistic modal logic of Fischer Servi as well as the positive modal logic of Dunn. In these logics, however, such compatibility conditions also impose further restrictions on the accessibility relation. In this paper, we identify the basic compatibility conditions which ensure that modal operators arise from a single accessibility relation without imposing any restrictions on the relation. As in the distributive logic of Gehrke, Nagahashi, and Venema, we allow for negative box and diamond operators here in addition to the usual positive ones. Intuitionistic modal logic and positive modal logic, or more precisely the corresponding classes of algebras, are then obtained in a modular way by adding certain canonical axioms which we call locality conditions on top of these basic compatibility conditions.
\end{abstract}

\maketitle

  Given a modal box operator $\Box$ and a modal diamond operator $\Diamond$ in a logic with distributive lattice connectives, how can we tell whether these represent two distinct modalities or different aspects of a single modality? In algebraic terms: given a box operator $\Box$ (a unary operation preserving finite meets) and a diamond operator $\Diamond$ (a unary operation preserving finite joins) on a distributive lattice $\alg{A}$, how can we tell whether they arise from a single accessibility relation on the dual space of $\alg{A}$?

  In the case of classical modal logic, this question has an easy answer: they arise from a single accessibility relation if and only if they are interdefinable as $\Diamond x = \neg \Box \neg x$, or equivalently $\Box x = \neg \Diamond \neg x$. If the box and diamond operators are no longer interdefinable, as happens in the case of the intuitionistic modal logic of Fischer Servi~\cite{fischer-servi77,fischer-servi84} (see \cite{simpson94} for a comprehensive discussion of Fischer Servi's logic and its relation to other variants of intuitionistic modal logic) and the positive modal logic of Dunn~\cite{dunn95} (the negation-free fragment of classical modal logic), the questios becomes substantially more complicated. In modal Heyting algebras (the algebraic semantics of intuitionistic modal logic) it is the compatibility relations
\begin{align*}
  & \Box x \wedge \Diamond y \leq \Diamond (x \wedge y), & \Diamond x \rightarrow \Box y & \leq \Box (x \rightarrow y),
\end{align*}
  which ensure that box and diamond arise from a single accessibility relation. In positive modal algebras (the algebraic semantics of positive modal logic) this role is played by the compatibility relations
\begin{align*}
  & \Box \wedge \Diamond y \leq \Diamond (x \wedge y), & \Box (x \vee y) & \leq \Box x \vee \Diamond y.
\end{align*}

  However, both of these pairs of equations impose further requirements above and beyond the fact that $\Box$ and $\Diamond$ come from the same relation, tying the accessibility relation $R$ to the partial order $\leq$ on the dual space in various ways. In intuitionistic modal logic, the conditions ${\geq} \circ R \subseteq R \circ {\geq}$ and $R \circ {\leq} \subseteq {\leq} \circ R$ are presupposed, while positive modal logic imposes the conditions ${\geq} \circ R \subseteq R \circ {\geq}$ and ${\leq} \circ R \subseteq R \circ {\leq}$.\footnote{We are using the partially ordered Kripke semantics of Celani and Jansana~\cite{celani+jansana97,celani+jansana99} here, rather than Dunn's original Kripke semantics.}

  In the present paper, we prove Kripke completeness theorems for logics whose modal operators arise from a single binary relation $R$ on a poset ordered by $\leq$ without imposing extra conditions relating $R$ and $\leq$. Such additional postulates can then be added \emph{\`{a} la carte} (see Example~\ref{example}), depending on one's choice of the modal signature (we allow for negative as well as positive modalities) and one's choice of desired compatibility conditions between $R$ and $\leq$ (we consider all $8$ possible conditions of the above kind). This in particular places both intuitionistic modal logic and positive modal logic within a systematic, modular framework as extensions of a single basic modal logic by locality conditions.

  We shall, however, not explicitly discuss logics \textit{qua} consequences relations or sets of formulas in this paper, choosing instead to deal directly with the corresponding classes of algebras. Associating a logic with the classes of algebras discussed in this paper is a mechanical task which has no bearing on our main concern here, namely determining the appropriate compatibility conditions between modal operators.

  The present paper is based on the conference paper \cite{prenosil14} and expands on it in several ways. Firstly, in addition to the usual box and diamond we allow for negative modal operators (as discussed in the next section). Secondly, we consider a wider range of locality conditions. And finally, we also consider in more detail the relationship between the two types Kripke semantics for distributive modal logic (namely what we-call multimodal and unimodal semantics below).

  The paper is structured as follows. In Section~\ref{sec:preliminaries} we review the work of Gehrke, Nagahashi, and Venema~\cite{gehrke+nagahashi+venema05} on distributive modal logic and recast it in our own notation and terminology. In Section~\ref{sec:unimodal} we discuss the relationship between the two possible semantics for extensions of distributive modal logic, namely multimodal and unimodal semantics. We axiomatize the quasivariety of modal algebras corresponing to the unimodal semantics. That is, we identify the minimal compatibility conditions which ensure that modal operators come from a single accessibility relation. Finally, in Section~\ref{sec:locality} we consider eight local modality conditions which bind together the order $\leq$ and the binary accessibility relation $R$ in a unimodal frame and show how to axiomatize the variety of modal algebras generated by the complex algebras of unimodal Kripke frames satisfying any combination of such conditions. This is a Kripke completeness result which in particular covers the known cases of intuitionistic modal logic and positive modal logic, as these are extensions of our basic modal logic by locality conditions. Example~\ref{example} illustrates how one picks the correct conditions from our menu of axioms when axiomatizing a particular logic.

\section{$\Delta_{\alpha}$-modal frames and algebras}
\label{sec:preliminaries}

  This section reviews the framework of distributive modal logic of Gehrke, Nagahashi, and Venema~\cite{gehrke+nagahashi+venema05}. We recast their definitions and results in slightly different terminology and notation. In particular, we use the symbols $\Diamond_{-}$ and $\Box_{-}$ instead of their symbols $\vartriangleleft$ and $\vartriangleright$, respectively. We also consider modalities which are adjoint to these, i.e.\ backward-looking modal operators in terms of their Kripke semantics. Apart from some simple observations about adjoint modalities, the results presented in this section are all taken from~\cite{gehrke+nagahashi+venema05}.

  We shall assume basic familiarity with universal algebra, distributive lattices, and Heyting algebras, which the reader may obtain from standard textbooks such as \cite{burris+sankappanavar81} and \cite{davey+priestley02}. Throughout the paper, by distributive lattices and their homomorphisms we shall always mean \emph{bounded} distributive lattices and homomorphisms which preserve these bounds.

  The algebras studied in this paper all have a distributive lattice reduct, some have a Heyting algebra reduct, and some in fact have a bi-Heyting algebra reduct. Let us recall the definition of bi-Heyting algebras.

\begin{definition}
  A \emph{Heyting algebra} is an algebra of the form $\alg{A} = \langle A, \wedge, \vee, \top, \bot \rightarrow \rangle$ such that $\langle A, \wedge, \vee, \top, \bot \rangle$ is a bounded distributive lattice and for all $a, b, c \in \alg{A}$
\begin{align*}
  a \wedge b \leq c & \iff b \leq a \rightarrow c.
\end{align*}
  A \emph{co-Heyting algebra} is an algebra of the form $\alg{A} = \langle A, \vee, \wedge, \bot, \top, \minus \rangle$ such that $\langle A, \wedge, \vee, \top, \bot \rangle$ is a bounded distributive lattice and for all $a, b, c \in \alg{A}$
\begin{align*}
  a \leq b \vee c & \iff a \minus b \leq c.
\end{align*}
  A \emph{bi-Heyting algebra} is an algebra of the form $\alg{A} = \langle A, \wedge, \vee, \top, \bot, \rightarrow, \minus \rangle$ such that $\langle A, \wedge, \vee, \top, \bot \rightarrow \rangle$ is a Heyting algebra and $\langle A, \wedge, \vee, \top, \bot, \minus \rangle$ is a co-Heyting algebra.
\end{definition}

  A \emph{polarity} is simply some $\alpha \in \{ +, - \}$. The variables $\alpha$ and $\beta$ will be reserved for polarities. For each poset $\pair{U}{\leq}$ and polarity $\alpha$ we define the poset $\pair{U}{\leq}^{\alpha}$ as follows:
\begin{align*}
  \pair{U}{\leq}^{\alpha} \assign \begin{cases} & \pair{U}{\leq} \text{ if } \alpha = +, \\ & \pair{U}{\geq} \text{ if } \alpha = -.\end{cases}
\end{align*}
   Likewise, given a distributive lattice $\alg{L} = \langle L, \wedge, \vee, \top, \bot \rangle$, we define $\alg{L}^{\alpha}$ as follows:
\begin{align*}
  \alg{L}^{\alpha} \assign \begin{cases} & \langle L, \wedge, \vee, \top, \bot \rangle \text{ if } \alpha = +, \\ & \langle L, \vee, \wedge, \bot, \top \rangle \text{ if } \alpha = -.\end{cases}
\end{align*}

  A \emph{modal symbol} $\Delta_{\alpha}$ is a pair consisting of some $\Delta \in \{ \Box, \Diamond \}$ and a polarity $\alpha \in \{ +, - \}$. In the following, we use $\Delta$ and $\nabla$ as variables which stand for either of the symbols $\Box$ and $\Diamond$ (or their adjoints, introduced below). The modal symbol \emph{opposite} to $\Box_{\alpha}$ (to $\Diamond_{\alpha}$) is the symbol $\Diamond_{\alpha}$ ($\Box_{\alpha}$).

\begin{definition}
  Let $\pair{U}{\leq}$ be a poset. We define the poset $\pair{U}{\leq} \times_{\Delta_{\alpha}} \pair{U}{\leq}$ as follows:
\begin{align*}
  \pair{U}{\leq} \times_{\Delta_{\alpha}} \pair{U}{\leq} \assign \begin{cases} & \pair{U}{\leq}^{-} \times \pair{U}{\leq}^{\alpha} \text{ if } \Delta = \Box, \\ & \pair{U}{\leq} \times \pair{U}{\leq}^{-\alpha} \text{ if }\Delta = \Diamond. \end{cases}
\end{align*}
  We say that a binary relation $R \subseteq U \times U$ is \emph{$\Delta_{\alpha}$-monotone} or simply that it is a \emph{$\Delta_{\alpha}$-relation} on $\pair{U}{\leq}$ if it is an upset of $\pair{U}{\leq} \times_{\Delta_{\alpha}} \pair{U}{\leq}$. A \emph{$\Delta_{\alpha}$-modal frame} is then a poset equipped with a $\Delta_{\alpha}$-relation $R^{\Delta}_{\alpha}$.
\end{definition}

  The least $\Delta_{\alpha}$-relation which extends $R$ will be denoted $\Delta_{\alpha}[R]$:
\begin{align*}
  \Box_{+}[R] & \assign {\leq} \circ {R} \circ {\leq}, & \Diamond_{+}[R] & \assign {\geq} \circ {R} \circ {\geq}, \\
  \Box_{-}[R] & \assign {\leq} \circ {R} \circ {\geq}, & \Diamond_{-}[R] & \assign {\geq} \circ {R} \circ {\leq}.
\end{align*}

\begin{definition}
  The \emph{complex algebra} $\complex{\modframe{F}}$ of a $\Delta_{\alpha}$-modal frame $\modframe{F} = \triple{U}{\leq}{R^{\Delta}_{\alpha}}$ is the distributive lattice of all upsets of $\pair{U}{\leq}$ equipped, depending on $\Delta_{\alpha}$, with one of the following operators:
\begin{align*}
  \Box_{+} a & \assign \set{u \in W}{u R^{\Box}_{+} v \text{ implies } v \in a}, & \Diamond_{+} a & \assign \set{u \in W}{u R^{\Diamond}_{+} v \text{ for some } v \in a}, \\
  \Box_{-} a & \assign \set{u \in W}{u R^{\Box}_{-} v \text{ implies } v \notin a}, & \Diamond_{-} a & \assign \set{u \in W}{u R^{\Diamond}_{-} v \text{ for some } v \notin a}.
\end{align*}
\end{definition}

  We use the notation $\complex{\modframe{F}}$ for the complex algebra rather than the more common $\modframe{F}^{+}$ in other to avoid confusion with other uses of $+$ as a superscript or subscript.

  Such algebras are canonical examples of what we call $\Delta_{\alpha}$-modal algebras.

\begin{definition} \label{def:modal-algebra}
  Let $\alg{L} = \langle L, \wedge, \vee, \top, \bot \rangle$ be a distributive lattice. A \emph{box operator of polarity $\alpha$} on $\alg{L}$ is a homomorphism of unital meet semilattices $\Box_{\alpha}\colon \alg{L}^\alpha \rightarrow \alg{L}$. A \emph{diamond operator of polarity $\alpha$} is a homomorphism of unital join semilattices $\Diamond_{\alpha}\colon \alg{L}^\alpha \rightarrow \alg{L}$. A \emph{$\Box_{\alpha}$-modal algebra over $\alg{L}$} is an expansion of $\alg{L}$ by a box operator of polarity $\alpha$. A \emph{$\Diamond_{\alpha}$-modal algebra} is an expansion of $\alg{L}$ by a diamond operator of polarity $\alpha$.
\end{definition}

  The class of all $\Delta_{\alpha}$-modal algebras forms a variety axiomatized relative to the variety of distributive lattices by one of the following pairs of the equations, depending on $\Delta_{\alpha}$:
\begin{align*}
  \Box_{+} (a \wedge b) & = \Box_{+} a \wedge \Box{+} b, & \Box_{+} \top & = \top, & \Diamond_{+} (a \vee b) & = \Diamond_{+} a \vee \Diamond_{+} b, & \Diamond_{+} \bot = \bot, \\
  \Box_{-} (a \vee b) & = \Box_{-} a \wedge \Box{-} b, & \Box_{-} \bot & = \top, & \Diamond_{-} (a \wedge b) & = \Diamond_{-} a \vee \Diamond{-} b, & \Diamond_{-} \top = \bot.
\end{align*}
  The complex algebras of $\Delta_{\alpha}$-modal frames are precisely the \emph{perfect} $\Delta_{\alpha}$-modal algebras, i.e.\ $\Delta_{\alpha}$-modal algebras $\alg{A}$ such that both $\alg{A}$ and $\alg{A}^{-}$ are algebraic lattices and the map $\Delta_{\alpha}\colon \alg{A}^{\alpha} \to \alg{A}$ preserves arbitrary meets if $\Delta = \Box$ and arbitrary joins if $\Delta = \Diamond$. In particular, each finite modal algebra is perfect.

  Each perfect $\Delta_{\alpha}$-modal algebra may be expanded to a unique Heyting (bi-Heyting) $\Delta_{\alpha}$-modal algebra. Here by a \emph{Heyting (bi-Heyting) $\Delta_{\alpha}$-modal algebra} we simply mean an algebra which is both a Heyting (bi-Heyting) algebra and a $\Delta_{\alpha}$-modal algebra.

\begin{proposition}
  The complex algebras of $\Delta_{\alpha}$-modal frames are precisely the perfect $\Delta_{\alpha}$-modal algebras.
\end{proposition}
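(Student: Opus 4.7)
The plan is to establish both inclusions. For the forward direction---that each complex algebra $\complex{\modframe{F}}$ is a perfect $\Delta_{\alpha}$-modal algebra---I would first recall that the lattice of upsets of any poset is a perfect distributive lattice: both it and its order dual are algebraic. It then suffices to check that the modal operator on $\complex{\modframe{F}}$ has the correct type and preserves arbitrary meets (if $\Delta = \Box$) or arbitrary joins (if $\Delta = \Diamond$) of its argument in the appropriate polarity. Both properties are immediate from the set-theoretic definitions: the positive-polarity operators are simply direct image and preimage along $R^{\Delta}_{\alpha}$, and the negative-polarity operators further invoke set-theoretic complementation, which is precisely the operation mediating between $\complex{\modframe{F}}$ and $\complex{\modframe{F}}^{-}$. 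The $\Delta_{\alpha}$-monotonicity of $R^{\Delta}_{\alpha}$ ensures that the output is again an upset of $\pair{U}{\leq}$.

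For the converse, let $\alg{A}$ be a perfect $\Delta_{\alpha}$-modal algebra. I would invoke the standard duality for perfect distributive lattices, which exhibits the lattice reduct of $\alg{A}$ as the lattice of upsets of a poset $\pair{U}{\leq}$ extracted from the completely prime elements of $\alg{A}$ (with appropriate orientation), via some isomorphism $\varphi$. The task is then to define a $\Delta_{\alpha}$-relation $R^{\Delta}_{\alpha}$ on $\pair{U}{\leq}$ so that the resulting complex operator agrees with $\varphi \circ \Delta_{\alpha} \circ \varphi^{-1}$. Since any candidate complex operator is forced by its behaviour on the completely prime elements---which correspond to points of $U$---the definition of $R^{\Delta}_{\alpha}$ is essentially dictated by $\Delta_{\alpha}$. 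The two things left to verify are that $R^{\Delta}_{\alpha}$ is actually $\Delta_{\alpha}$-monotone, which follows from the monotonicity of $\Delta_{\alpha}\colon \alg{A}^{\alpha} \to \alg{A}$, and that the resulting complex operator reproduces $\Delta_{\alpha}$ on all of $\alg{A}$. Perfectness is essential for the latter: preservation of arbitrary meets or joins of the appropriate kind reduces the equality to agreement on completely prime inputs, which then holds by construction.

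The main obstacle is the four-fold case split over $\Delta_{\alpha}$, which threatens to balloon into four parallel arguments. The notational conventions introduced just before the statement---$\pair{U}{\leq}^{\alpha}$, $\alg{L}^{\alpha}$, and the product $\pair{U}{\leq} \times_{\Delta_{\alpha}} \pair{U}{\leq}$---are clearly engineered precisely to enable a unified treatment, and I would lean on them heavily so that polarity is handled by formal substitution rather than by manual case analysis. With this in place, the four cases collapse into a single argument with very little additional work.
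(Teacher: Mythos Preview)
The paper does not actually prove this proposition: it is stated without proof in Section~\ref{sec:preliminaries}, which is explicitly a review section whose results are attributed to Gehrke, Nagahashi, and Venema~\cite{gehrke+nagahashi+venema05}. Your plan is the standard argument and is correct; in particular your identification of the two key points in the converse direction (that $\Delta_{\alpha}$-monotonicity of the recovered relation follows from monotonicity of the operator, and that agreement on all elements reduces by perfectness to agreement on completely join- or meet-irreducibles) is exactly what is needed.
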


  Each modal algebra embeds into a perfect modal algebra in a natural way.

\begin{definition}
  Let $\alg{A}$ be a $\Delta_{\alpha}$-modal algebra. The \emph{canonical frame} of $\alg{A}$, denoted $\canframe{\alg{A}}$, is the poset of all prime filters on $\alg{A}$ ordered by inclusion equipped with the $\Delta_{\alpha}$-relation $R^{\Delta}_{\alpha}$ defined, depending on $\Delta_{\alpha}$, as:
\begin{align*}
  \filter{U} R^{\Box}_{+} \filter{V} & \iff \text{ $\Box_{+} a \in \filter{U}$ implies $a \in \filter{V}$}, \\
  \filter{U} R^{\Box}_{-} \filter{V} & \iff \text{ $\Box_{-} a \in \filter{U}$ implies $a \notin \filter{V}$}, \\
  \filter{U} R^{\Diamond}_{+} \filter{V} & \iff \text{ $a \in \filter{V}$ implies $\Diamond_{+} a \in \filter{U}$}, \\
  \filter{U} R^{\Diamond}_{-} \filter{V} & \iff \text{ $a \notin \filter{V}$ implies $\Diamond_{-} a \in \filter{U}$}.
\end{align*}
\end{definition}

  We define the map $\unit_{\alg{A}}\colon \alg{A} \rightarrow \complex{(\canframe{\alg{A}})}$ as $\unit_{\alg{A}} (a) \assign \set{\filter{U} \in \canframe{\alg{A}}}{a \in \filter{U}}$.

\begin{theorem}[Canonical embedding] \label{thm:canonical-embedding}
  Let $\alg{A}$ be a modal $\Delta_{\alpha}$-algebra. Then the map $\unit_{\alg{A}}\colon \alg{A} \rightarrow \complex{(\canframe{\alg{A}})}$ is an embedding of modal $\Delta_{\alpha}$-algebras called the \emph{canonical embedding}.
\end{theorem}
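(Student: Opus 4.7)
The plan is to establish this in three stages. First I would verify that $\canframe{\alg{A}}$ is genuinely a $\Delta_{\alpha}$-modal frame, i.e., that $R^{\Delta}_{\alpha}$ is $\Delta_{\alpha}$-monotone on the poset of prime filters ordered by inclusion; in each of the four cases this reduces to the fact that prime filters are closed upward with respect to $\leq$ on $\alg{A}$. For instance, if $\filter{U}' \subseteq \filter{U}$, $\filter{U} R^{\Box}_{+} \filter{V}$, and $\filter{V} \subseteq \filter{V}'$, then $\Box_{+} b \in \filter{U}'$ forces $\Box_{+} b \in \filter{U}$, hence $b \in \filter{V} \subseteq \filter{V}'$, as required. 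Second, I would invoke the classical Stone/Birkhoff prime filter representation to conclude that $\unit_{\alg{A}}$ is a distributive lattice embedding: it is a lattice homomorphism because prime filters are closed under finite meets and their complements under finite joins, and it is injective by the prime filter separation theorem applied to pairs $a \not\leq b$.

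The heart of the proof is the third stage, showing $\unit_{\alg{A}}(\Delta_{\alpha} a) = \Delta_{\alpha} \unit_{\alg{A}}(a)$. Consider first $\Delta_{\alpha} = \Box_{+}$. The inclusion $\unit_{\alg{A}}(\Box_{+} a) \subseteq \Box_{+} \unit_{\alg{A}}(a)$ is immediate from the very definition of $R^{\Box}_{+}$. The reverse inclusion is the crucial existence lemma: if $\Box_{+} a \notin \filter{U}$ I must produce a prime filter $\filter{V}$ with $\filter{U} R^{\Box}_{+} \filter{V}$ and $a \notin \filter{V}$. My approach is to form $F \assign \set{b \in \alg{A}}{\Box_{+} b \in \filter{U}}$, which is a filter precisely because $\Box_{+}$ preserves $\wedge$ and $\top$; observe that $F \cap {\downarrow} a = \emptyset$ (otherwise some $b \leq a$ with $\Box_{+} b \in \filter{U}$ would force $\Box_{+} a \in \filter{U}$ by monotonicity); and then apply the prime filter separation theorem to extend $F$ to a prime filter $\filter{V}$ disjoint from $\downarrow a$. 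By construction this $\filter{V}$ witnesses the required condition.

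The remaining three cases $\Diamond_{+}$, $\Box_{-}$, $\Diamond_{-}$ follow the same template with suitable dualisations, and here one must be careful because $\Box_{-}$ and $\Diamond_{-}$ are antitone. For $\Diamond_{+}$ I would separate the ideal $\set{b}{\Diamond_{+} b \notin \filter{U}}$ from the principal filter $\uparrow a$. For $\Box_{-}$ I would separate the ideal $\set{b}{\Box_{-} b \in \filter{U}}$ from $\uparrow a$. For $\Diamond_{-}$ I would separate the filter $\set{b}{\Diamond_{-} b \notin \filter{U}}$ from $\downarrow a$. In each case the defining equations of the relevant operator, such as $\Box_{-}(a \vee b) = \Box_{-} a \wedge \Box_{-} b$ and $\Box_{-} \bot = \top$ in the $\Box_{-}$ case, are exactly what guarantee that the indicated auxiliary set is a filter or an ideal, and antitonicity combined with primeness of $\filter{U}$ delivers the required disjointness. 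The main obstacle is not conceptual but organisational: identifying the correct filter--ideal pair for each of the four modal symbols and verifying their disjointness from the algebraic identities in force. Once that is done, the prime filter separation theorem uniformly delivers the witness and the theorem follows.
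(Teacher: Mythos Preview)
Your proposal is correct and follows the standard Stone--Priestley route: verify monotonicity of the canonical relation, invoke the prime filter representation for the lattice part, and for each modality produce the required witnessing prime filter by separating an appropriate filter--ideal pair built from the operator. The filter/ideal choices and disjointness arguments you sketch for all four cases $\Box_{+}$, $\Diamond_{+}$, $\Box_{-}$, $\Diamond_{-}$ are the right ones.

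There is nothing to compare against, however: the paper does not supply its own proof of this theorem. It is stated in the preliminaries section as a known result imported from Gehrke, Nagahashi, and Venema~\cite{gehrke+nagahashi+venema05}, with the surrounding remark that ``the results presented in this section are all taken from~\cite{gehrke+nagahashi+venema05}.'' Your argument is exactly the kind of proof one would give if asked to fill in this citation.
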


  The algebra $\complex{(\canframe{\alg{A}})}$ and the embedding $\unit_{\alg{A}}$ may also be introduced algebraically without mentioning the canonical frame of $\alg{A}$. The map $\unit_{\alg{A}}$ is, in a suitable sense, a dense and compact embedding of $\alg{A}$ into $\complex{(\canframe{\alg{A}})}$, and the algebra $\complex{(\canframe{\alg{A}})}$, also called the \emph{canonical extension} of $\alg{A}$, is up to isomorphism the unique algebra into which $\alg{A}$ embeds in a dense and compact way. This is the perspective adopted in~\cite{gehrke+nagahashi+venema05}. However, we shall be satisfied with the less abstract definition in terms of the canonical frame of $\alg{A}$.

  A class of modal $\Delta_{\alpha}$-algebras closed under canonical extensions will be called \emph{canonical}. By extension, a set of universal sentences is canonical if they axiomatize a canonical universal class. Theorem \ref{thm:canonical-embedding} then implies that each canonical universal class is generated by its perfect algebras. Being generated by perfect algebras is precisely the algebraic counterpart of being complete with respect to some class of modal frames, i.e.\ the algebraic formulation of being \emph{Kripke complete}.

  We shall also consider modalities which are \emph{adjoint} to the four modal operators introduced above. We shall not not study algebras with adjoint modalities for their own sake, but they will prove useful in understanding what we call local modality conditions. Semantically, adjoint modalities governed by the \emph{converse} of the modal accessibility relations governing ordinary modalities. (The \emph{converse} of a relation $R$ is the relation $\converse{R}$ such that $x \converse{R} y$ if and only if $y R x$.) They are accordingly also called \emph{backward} modalities and denoted $\AdjBox_{\alpha}$ and $\AdjDiamond_{\alpha}$, in contrast to the \emph{forward} modalities $\Box_{\alpha}$ and $\Diamond_{\alpha}$. Each forward modality has a corresponding adjoint backward modality, the adjoint pairs being
\begin{align*}
  & \Box_{+} \text{ and } \AdjDiamond_{+}, & & \Box_{-} \text{ and } \AdjBox_{-}, \\
  & \Diamond_{+} \text{ and } \AdjBox_{+}, & & \Diamond_{-} \text{ and } \AdjDiamond_{-}.
\end{align*}

  A \emph{$\AdjBox_{\alpha}$-modal ($\AdjDiamond_{\alpha}$-modal) frame} or \emph{algebra} is simply a $\Box_{\alpha}$-modal ($\Diamond_{\alpha}$-modal) frame or algebra under a different name.  The converse of each $\Delta_{\alpha}$-relation is a $\nabla_{\!\alpha}$-relation, where $\nabla_{\!\alpha}$ is the modality adjoint to~$\Delta_{\alpha}$. For example, the converse of a $\Box_{+}$-relation is a $\AdjDiamond_{+}$-relation (i.e.\ a $\Diamond_{+}$-relation denoted by $\AdjDiamond_{+}$) and the converse of a $\Box_{-}$ relation is a $\AdjBox_{-}$ relation (i.e\ a $\Box_{-}$-relation denoted by $\AdjBox_{-}$).

\begin{definition}
  A \emph{tense $\Delta_{\alpha}$-modal frame} is both a $\Delta_{\alpha}$-modal frame and a $\nabla_{\!\alpha}$-modal frame over the same poset, where $\nabla_{\!\alpha}$ is adjoint to $\Delta_{\alpha}$, such that $R^{\Delta}_{\alpha}$ and $R^{\nabla}_{\alpha}$ are mutually converse relations.
\end{definition}

  For example, a \emph{tense $\Box_{+}$-modal frame} is a structure $\langle U, \leq, R^{\Box}_{+}, R^{\AdjDiamond}_{+} \rangle$ such that $R^{\Box}_{+}$ is a $\Box_{+}$-relation on $\pair{U}{\leq}$ and $R^{\AdjDiamond}_{+}$ is the converse $\AdjDiamond_{+}$-relation on $\pair{U}{\leq}$. The \emph{complex algebra} of a tense modal frame is accordingly a distributive lattice expanded by two modal operators. 

\begin{definition}

  A \emph{tense $\Delta_{\alpha}$-modal algebra} is both a $\Delta_{\alpha}$-modal algebra a $\nabla_{\!\alpha}$-modal algebra over the same distributive lattice such that the symbol $\nabla_{\!\alpha}$ is adjoint to $\Delta_{\alpha}$ and the appropriate pair of the following conditions holds:
\begin{align*}
  & a \leq \Box_{+} \AdjDiamond_{+} a, & & a \leq \AdjBox_{+} \Diamond_{+} a, & & a \leq \Box_{-} \AdjBox_{-} a, & & \AdjDiamond_{-} \Diamond_{-} a \leq a, \\
  & \AdjDiamond_{+} \Box_{+} a \leq a, & & \Diamond_{+} \AdjBox_{+} a \leq a, & & a \leq \AdjBox_{-} \Box_{-} a, & & \Diamond_{-} \AdjDiamond_{-} a \leq a.
\end{align*}
  Equivalently, the appropriate equivalence below holds:
\begin{align*}
  a \leq \Box_{+} b & \iff \AdjDiamond_{+} a \leq b, &
  \Diamond_{+} a \leq b & \iff a \leq \AdjBox_{+} b, \\
  a \leq \Box_{-} b & \iff b \leq \AdjBox_{-} a, &
  \Diamond_{-} a \leq b & \iff \AdjDiamond_{-} b \leq a.
\end{align*}
\end{definition}
  For example, a tense $\Box_{+}$-modal algebra $\alg{A}$ is both a $\Box_{+}$-modal algebra and a $\AdjDiamond_{+}$-modal algebra over the same distributive lattice such that $a \leq \Box_{+} \AdjDiamond_{+} a$ and $\AdjDiamond_{+} \Box_{+} a \leq a$ hold for all $a \in \alg{A}$. A tense $\Delta_{\alpha}$-modal algebra is \emph{perfect} if it is perfect both as a $\Delta_{\alpha}$-modal algebra and as a $\nabla_{\!\alpha}$-modal algebra.

\begin{proposition}
  The complex algebras of tense $\Delta_{\alpha}$-modal frames are the perfect tense $\Delta_{\alpha}$-modal algebras.
\end{proposition}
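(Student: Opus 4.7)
The plan is to reduce to the previously-stated proposition characterizing complex algebras of single-operator frames, applied separately to each of the two modalities, and then to check that the algebraic adjointness conditions correspond exactly to the geometric condition that the two accessibility relations are mutual converses.

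For the forward direction, let $\modframe{F} = \langle U, \leq, R^{\Delta}_{\alpha}, R^{\nabla}_{\alpha} \rangle$ be a tense $\Delta_{\alpha}$-modal frame. By the earlier proposition, the complex algebra of $\langle U, \leq, R^{\Delta}_{\alpha} \rangle$ is a perfect $\Delta_{\alpha}$-modal algebra, and the complex algebra of $\langle U, \leq, R^{\nabla}_{\alpha} \rangle$ is a perfect $\nabla_{\!\alpha}$-modal algebra. Since both live on the same underlying distributive lattice of upsets of $\pair{U}{\leq}$, taking their common expansion yields a single algebra on which both operators are simultaneously defined and perfect. The only remaining work is to verify the adjointness inequalities. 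This is a short set-theoretic calculation using $R^{\nabla}_{\alpha} = \converse{R^{\Delta}_{\alpha}}$; for instance, in the $\Box_{+}/\AdjDiamond_{+}$ case, if $u \in a$ and $u R^{\Box}_{+} v$ then $v R^{\AdjDiamond}_{+} u$ witnesses $v \in \AdjDiamond_{+} a$, so $u \in \Box_{+} \AdjDiamond_{+} a$, and dually for $\AdjDiamond_{+} \Box_{+} a \leq a$. The three other polarity cases are parallel, with $\leq$ swapped for $\geq$ on the appropriate factor.

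For the reverse direction, let $\alg{A}$ be a perfect tense $\Delta_{\alpha}$-modal algebra. Applying the earlier proposition twice produces a $\Delta_{\alpha}$-frame and a $\nabla_{\!\alpha}$-frame whose complex algebras realize $\alg{A}$ with respect to each operator. Because the construction of the dual poset from a perfect distributive lattice (via completely join-prime elements) depends only on the lattice reduct, not on the modal operators, both frames live on the same poset $\pair{U}{\leq}$, which is the canonical frame of $\alg{A}$. Thus we obtain a single structure $\langle U, \leq, R^{\Delta}_{\alpha}, R^{\nabla}_{\alpha} \rangle$. The crucial step is then to show that the adjointness equations of the tense $\Delta_{\alpha}$-modal algebra force $R^{\nabla}_{\alpha} = \converse{R^{\Delta}_{\alpha}}$. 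Spelling out the relations on $\canframe{\alg{A}}$ using the definitions from the previous section, the inclusion $a \leq \Box_{+} \AdjDiamond_{+} a$ (for instance) says that whenever $a \in \filter{U}$ and $\filter{U} R^{\Box}_{+} \filter{V}$, we have $\AdjDiamond_{+} a \in \filter{V}$; by prime-filter/Zorn-style separation this is exactly the statement $\filter{U} R^{\Box}_{+} \filter{V} \Rightarrow \filter{V} R^{\AdjDiamond}_{+} \filter{U}$, with the converse implication coming from $\AdjDiamond_{+} \Box_{+} a \leq a$.

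The main obstacle is the bookkeeping across the four adjoint pairs. Because two of the pairs involve a negative modality, the role of $\leq$ and $\geq$ is flipped on one of the sides and the adjointness equations accordingly change shape; one must check polarity-by-polarity that the algebraic adjunction translates into the same geometric condition ``$R^{\nabla}_{\alpha}$ is the converse of $R^{\Delta}_{\alpha}$.'' A clean way to organize this is to view the adjointness equations as asserting that $\Delta_{\alpha}$ and $\nabla_{\!\alpha}$ form a residuated pair between $\alg{A}^{\alpha}$ and $\alg{A}$ (in the direction dictated by the type of each operator), and to invoke the standard fact that on a perfect distributive lattice such residuated pairs are in bijective correspondence with pairs of mutually converse relations on the dual poset. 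With this observation in hand, the four cases collapse to a single uniform argument, and the proposition follows.
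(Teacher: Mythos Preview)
The paper does not supply a proof of this proposition: it sits in the preliminaries section, which explicitly reviews material taken from Gehrke, Nagahashi, and Venema, and the statement is left unargued. Your proposal is therefore not being compared against an existing argument, but it is a perfectly reasonable way to fill the gap.

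Your overall architecture is sound. Reducing each direction to the single-operator proposition and then checking that adjointness on the algebra side matches ``mutually converse relations'' on the frame side is exactly what is needed. Two minor remarks. First, the reference to ``prime-filter/Zorn-style separation'' in the reverse direction is out of place: once $\alg{A}$ is perfect, the canonical embedding $\unit_{\alg{A}}$ is an isomorphism, and the relation can be read off from the operator on \emph{principal} upsets (e.g.\ $u R^{\AdjDiamond}_{+} v \iff u \in \AdjDiamond_{+}(\uparrow v)$), so no maximality argument is required. Second, and relatedly, the cleanest way to finish is the one you sketch in your final paragraph: uniqueness of adjoints forces $\AdjDiamond_{+}$ to coincide with the $\Diamond_{+}$-type operator induced by $\converse{R^{\Box}_{+}}$, and since a $\Diamond_{+}$-relation is recoverable from its operator on principal upsets, $R^{\AdjDiamond}_{+} = \converse{R^{\Box}_{+}}$ follows. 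That argument handles all four polarity cases uniformly and makes the earlier canonical-frame computation redundant.
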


  The canonical frame of a tense $\Delta_{\alpha}$-modal algebra is defined in the expected way as a pair of canonical frames over the same poset. Crucially, the adjointness conditions above ensure that the canonical frame is indeed a tense modal frame. Equivalently, we may say that the adjointness conditions are canonical.

\begin{proposition}
  The canonical frame of a tense $\Delta_{\alpha}$-modal algebra is a tense $\Delta_{\alpha}$-modal frame.
\end{proposition}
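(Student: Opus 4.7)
The plan is to verify that for each of the four adjoint pairs $\Delta_{\alpha}, \nabla_{\!\alpha}$ the canonically defined relations $R^{\Delta}_{\alpha}$ and $R^{\nabla}_{\alpha}$ on the poset of prime filters of $\alg{A}$ are mutually converse, i.e., that $\filter{U} R^{\Delta}_{\alpha} \filter{V}$ if and only if $\filter{V} R^{\nabla}_{\alpha} \filter{U}$ for all prime filters $\filter{U}, \filter{V}$ of $\alg{A}$. I would fix a representative pair, say $\Box_{+}/\AdjDiamond_{+}$, carry out the argument in detail, and then point out that the other three pairs go through by the same template.

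For $\Box_{+}/\AdjDiamond_{+}$, the computation uses only the unit and counit inequalities $a \leq \Box_{+} \AdjDiamond_{+} a$ and $\AdjDiamond_{+} \Box_{+} a \leq a$ from the definition of tense modal algebra, together with upward closure of prime filters. Suppose $\filter{U} R^{\Box}_{+} \filter{V}$ and $a \in \filter{U}$; the unit inequality gives $\Box_{+} \AdjDiamond_{+} a \in \filter{U}$, so the definition of $R^{\Box}_{+}$ yields $\AdjDiamond_{+} a \in \filter{V}$, which is exactly what $\filter{V} R^{\AdjDiamond}_{+} \filter{U}$ demands. Conversely, if $\filter{V} R^{\AdjDiamond}_{+} \filter{U}$ and $\Box_{+} a \in \filter{U}$, then the definition of $R^{\AdjDiamond}_{+}$ delivers $\AdjDiamond_{+} \Box_{+} a \in \filter{V}$, and the counit inequality then gives $a \in \filter{V}$.

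The remaining three pairs admit a fully analogous argument from their own unit/counit inequalities. The only bookkeeping subtlety is that for the negative modalities $\Box_{-}$ and $\Diamond_{-}$ the canonical relation is phrased in negative form --- ``$\Box_{-} a \in \filter{U}$ implies $a \notin \filter{V}$'' and dually for $\Diamond_{-}$ --- so the argument is most cleanly run by contraposition. For instance, for $\Box_{-}/\AdjBox_{-}$, assuming $\filter{V} R^{\AdjBox}_{-} \filter{U}$, if $\Box_{-} a \in \filter{U}$ and $a \in \filter{V}$ then $\AdjBox_{-} \Box_{-} a \in \filter{V}$ by $a \leq \AdjBox_{-} \Box_{-} a$, which forces $\Box_{-} a \notin \filter{U}$ by the definition of $R^{\AdjBox}_{-}$, a contradiction.

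The main obstacle is purely organizational: four cases, each with its own definition of the canonical accessibility relation and its own pair of unit/counit inequalities, must be matched up correctly according to polarity and to the box/diamond type. No case is mathematically deep --- each reduces to a single application of a unit or counit inequality followed by the definition of the canonical relation. The mathematical content has already been packaged into the definition of tense $\Delta_{\alpha}$-modal algebra, whose adjointness axioms were chosen precisely to make this argument go through.
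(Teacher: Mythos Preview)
Your argument is correct and is exactly the standard verification: match each adjoint pair with its unit/counit inequalities and chase the definitions of the canonical relations. The paper itself does not supply a proof of this proposition; it appears in the preliminaries section, where the results are attributed to Gehrke, Nagahashi, and Venema, and the adjointness conditions are described as having been ``chosen precisely'' so that the canonical frame is a tense modal frame --- which is just what you observe in your final paragraph. So there is nothing to compare against; your write-up is the expected routine proof.
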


  Finally, it will be useful to make explicit two symmetries present in our system of four forward modalities and four backward modalities. Firstly, we shall say that the modalities $\Box_{\alpha}$ and $\AdjBox_{\alpha}$ are \emph{opposite} to the modalities $\Diamond_{\alpha}$ and $\AdjDiamond_{\alpha}$. This relation will also be called \emph{order duality}. Secondly, we have the adjointness relation between the appropriate forward and backward modalities, which we call \emph{forward--backward duality}. Exploiting these two symmetries will often allow us to cut our work down by half.

  These symmetries extend to the semantic and algebraic structures introduced above. The order dual of a $\Delta_{\alpha}$-modal frame $\modframe{F} = \triple{U}{\leq}{R^{\Delta}_{\alpha}}$ is the $\nabla_{\!\alpha}$-modal frame $\modframe{F}^{-} = \triple{U}{\geq}{R^{\Delta}_{\alpha}}$  and the order dual of a $\Delta_{\alpha}$-modal algebra $\alg{A} = \langle A, \wedge, \vee, \top, \bot, \Delta_{\alpha} \rangle$ is the $\nabla_{\!\alpha}$-modal algebra $\alg{A}^{-} = \langle A, \vee, \wedge, \bot, \top, \Delta_{\alpha} \rangle$, where $\nabla_{\!\alpha}$ is the modality order dual to $\Delta_{\alpha}$. The converse of a $\Delta_{\alpha}$-modal frame $\modframe{F} = (U, \leq, R^{\Delta}_{\alpha})$ is the $\nabla_{\!\alpha}$-modal frame $\converse{\modframe{F}} = (U, \leq, \overline{R^{\Delta}_{\alpha}})$ with the converse accessibility relation, where $\nabla_{\!\alpha}$ is the modality adjoint to $\Delta_{\alpha}$. Finally, the adjoint of a $\Delta_{\alpha}$-modal algebra $\alg{A} = \langle A, \wedge, \vee, \top, \bot, \Delta_{\alpha} \rangle$, if it exists, is the $\nabla_{\!\alpha}$-modal algebra $\converse{\alg{A}} = \langle A, \wedge, \vee, \top, \bot, \nabla_{\!\alpha} \rangle$ such that $\langle A, \wedge, \vee, \top, \bot, \Delta_{\alpha}, \nabla_{\!\alpha} \rangle$ is a tense $\Delta_{\alpha}$-modal algebra, where $\nabla_{\alpha}$ is the modality adjoint to $\Delta_{\alpha}$. These symmetries extend to tense modal frames and algebras in the expected way, i.e.\ componentwise. For example, the adjoint of a tense $\Box_{+}$-modal algebra $\langle A, \wedge, \vee, \top, \bot, \Box_{+}, \AdjDiamond_{+} \rangle$ is the tense $\Diamond_{+}$-modal algebra $\langle A, \wedge, \vee, \top, \bot, \Diamond_{+}, \AdjBox_{+} \rangle$ such that $\Diamond_{+} a = \AdjDiamond_{+} a$ and $\AdjBox_{+} a = \Box_{+} a$.

  Conveniently, the operations of taking the complex algebra of a modal frame and taking the canonical frame of a modal algebra commute with these symmetries in the following sense.

\begin{proposition} \label{prop:symmetries}
  If $\modframe{F}$ is a $\Delta_{\alpha}$-modal frame, then $\complex{(\modframe{F}^{-})} = (\complex{\modframe{F}})^{-}$ and $\complex{(\converse{\modframe{F}})} = \converse{\complex{\modframe{F}}}$. If $\alg{A}$ is a $\Delta_{\alpha}$-modal algebra, then $\canframe{(\alg{A}^{-})} = (\canframe{\alg{A}})^{-}$. If $\alg{A}$ is a tense $\Delta_{\alpha}$-modal algebra, then $\converse{\alg{A}^{-}} = (\converse{\alg{A}})^{-}$.
\end{proposition}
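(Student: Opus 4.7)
The proposition records four identities, each of which is a routine unfolding of definitions once one is careful to track polarities and the order-duality/adjointness correspondences between modal symbols. My plan is to handle them one at a time, exploiting the two symmetries highlighted at the end of the section so that in each case it suffices to verify one representative sub-case (say $\Delta_\alpha = \Box_+$) and read off the rest by order duality.

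For the first identity, I unfold: $\modframe{F}^{-}$ has poset $\pair{U}{\geq}$ and the relation $R^{\Delta}_{\alpha}$, now viewed as a $\nabla_{\!\alpha}$-relation with $\nabla_{\!\alpha}$ the modality order-dual to $\Delta_\alpha$. Upsets of $\pair{U}{\geq}$ are the downsets of $\pair{U}{\leq}$; under the canonical identification of the lattice of all subsets of $U$ with its order dual, this matches the underlying lattice of $(\complex{\modframe{F}})^{-}$. The task is then to check that the modal operator defined from $R^{\Delta}_{\alpha}$ via the $\nabla_{\!\alpha}$-clause of the complex-algebra definition coincides with the original $\Delta_\alpha$-operator read in the order-dual lattice. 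For example, for $\Delta_\alpha = \Box_+$ the dual $\nabla_{\!\alpha}$ is $\Diamond_-$, and substituting $R^{\Box}_+$ into the $\Diamond_-$ clause gives exactly the complement-style reformulation of the $\Box_+$ clause.

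The second identity is similar but uses the adjointness tables. On $\converse{\modframe{F}}$ the relation is $\overline{R^{\Delta}_{\alpha}}$, now regarded as a $\nabla_{\!\alpha}$-relation for the \emph{adjoint} $\nabla_{\!\alpha}$ of $\Delta_\alpha$. Substituting into the appropriate modal-operator clause and comparing with the equivalences
\begin{align*}
  a \leq \Box_{+} b & \iff \AdjDiamond_{+} a \leq b, &
  \Diamond_{+} a \leq b & \iff a \leq \AdjBox_{+} b, \\
  a \leq \Box_{-} b & \iff b \leq \AdjBox_{-} a, &
  \Diamond_{-} a \leq b & \iff \AdjDiamond_{-} b \leq a,
\end{align*}
one reads off that the resulting operator is precisely the adjoint of $\Delta_\alpha$ in the sense of tense modal algebras; this is exactly the modal reduct of $\converse{\complex{\modframe{F}}}$.

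For the third identity I would invoke the standard bijection between prime filters on $\alg{A}$ and prime filters on $\alg{A}^{-}$ via taking set-theoretic complements, which is order-reversing and hence provides a poset isomorphism $\canframe{(\alg{A}^{-})} \cong (\canframe{\alg{A}})^{-}$. The only thing to verify is that it transports the accessibility relation correctly; this reduces, by the definition of the canonical frame, to the tautological observation that $\Box_{\alpha}$ on $\alg{A}^{-}$ is the same function on the same set as $\Delta_\alpha$ on $\alg{A}$ (only the lattice framing changes), so the defining clauses line up after the complement-based relabelling of prime filters. The fourth identity follows from the third plus the first: $\converse{\alg{A}^{-}}$ and $(\converse{\alg{A}})^{-}$ are both obtained by adjoining to $\alg{A}$ (or its dual) the operator adjoint to $\Delta_\alpha$, and the commutation of ``order dual'' with ``adjoint'' is visible from the tables of adjoint and opposite pairs.

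The only real obstacle is bookkeeping: one must distinguish the underlying set of an algebra from the operations that structure it, keep straight that order duality on a lattice swaps $\wedge\leftrightarrow\vee$ and $\top\leftrightarrow\bot$ while changing the polarity of each modality, and remember that converse accessibility relations induce the \emph{adjoint} rather than the order-dual modality. None of the computations is deep; the content is entirely in confirming that the definitions have been set up coherently.
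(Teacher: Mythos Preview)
The paper states this proposition without proof, treating it as a routine verification; your plan to unfold definitions case by case is exactly the intended approach.

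There is, however, a concrete bookkeeping error in your representative case for the first identity. You write that for $\Delta_{\alpha} = \Box_{+}$ the order-dual modality $\nabla_{\!\alpha}$ is $\Diamond_{-}$, but according to the paper's definition of order duality (``the modalities $\Box_{\alpha}$ and $\AdjBox_{\alpha}$ are opposite to $\Diamond_{\alpha}$ and $\AdjDiamond_{\alpha}$''), the order dual of $\Box_{+}$ is $\Diamond_{+}$: the polarity does not change. You can also see this at the level of relations: a $\Box_{+}$-relation on $\pair{U}{\leq}$ is an upset of $\pair{U}{\geq}\times\pair{U}{\leq}$, which is precisely a $\Diamond_{+}$-relation on $\pair{U}{\geq}$, not a $\Diamond_{-}$-relation. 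Your proposed check then fails: under the complementation isomorphism $c\colon a \mapsto U\setminus a$ one needs $c(\Box_{+} a) = \nabla_{\!\alpha}(c(a))$, and with $\nabla_{\!\alpha} = \Diamond_{-}$ this gives $\{u : \exists v,\, u R^{\Box}_{+} v,\, v \in a\}$ on the right-hand side, whereas $c(\Box_{+} a) = \{u : \exists v,\, u R^{\Box}_{+} v,\, v \notin a\}$. With the correct choice $\nabla_{\!\alpha} = \Diamond_{+}$ the two sides match. A similar slip appears in your treatment of the fourth identity: it does not ``follow from the third plus the first'' (those concern canonical frames and complex algebras, while the fourth is purely algebraic), though your subsequent remark that order-dual and adjoint commute at the level of the modal-symbol tables is the right justification.
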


  This proposition in effect allows us to consider only signature contains $2^{3} - 2 = 6$ different sets of modal operators instead of $2^{4} - 4 = 12$ (excluding signatures with only one modal operator). It will be useful when discussing what we shall call the backward locality conditions in Section~\ref{sec:locality}.

\section{Unimodal frames and algebras}
\label{sec:unimodal}

  Having introduced modal frames with a single accessibility relation and corresponding modal algebras with a single modal operator or a single adjoint pair of modalities, we now turn our attention to multiple accessibility relations over the same poset and multiple modal operators over the same distributive lattice. If~no conditions connecting these modalities are postulated, such multimodal algebras and frames are nothing but tuples of $\Delta_{\alpha}$-modal algebras or frames over the same distributive lattice or poset. However, we shall be interested in the case where these modal operators capture different aspects of a \emph{single} modality. Such algebras and frames shall be called \emph{unimodal}. A \emph{unimodal signature} will be a set of modal symbols, and a \emph{tense unimodal signature} will be a set of modal symbols or adjoint modal symbols. The \emph{full (tense) unimodal signature} is the set of all modal symbols (or adjoint modal symbols). 

  Let $\lang{L}$ be a unimodal signature and let $\pair{W}{\leq}$ be a poset in the following. Recall that $\Delta_{\alpha}[R]$ denotes the least $\Delta_{\alpha}$-relation extending $R$, e.g.\ $\Box_{+}[R] = {\leq} \circ {R} \circ {\leq}$.

\begin{definition}
  An \emph{$\lang{L}$-modal frame} over $W$ is an $\lang{L}$-tuple $R_{\lang{L}}$ of monotone relations over $W$. An \emph{$\lang{L}$-modal algebra} over $\alg{L}$ is an expansion of $\alg{L}$ by a $\Delta_{\alpha}$-modal operator for each $\Delta_{\alpha} \in \lang{L}$.
\end{definition}

  An $\lang{L}$-modal frame over $W$ is to be viewed as a $\Delta_{\alpha}$-modal frame over $W$ for each $\Delta_{\alpha} \in \lang{L}$. Accordingly, the complex algebra of an $\lang{L}$-modal frame is an $\lang{L}$-modal algebra.

  We now wish to formalize the idea that the accessibility relations of an $\lang{L}$-modal frame in fact come from a single binary relation, just like they do in intuitionistic modal logic and positive modal logic.

\begin{definition}
  An \emph{$\lang{L}$-tuple $R_{\lang{L}}$ of monotone relations on $W$} is consists of a $\Delta_{\alpha}$-monotone relation $R^{\Delta}_{\alpha}$ on $W$ to each $\Delta_{\alpha} \in \lang{L}$. An $\lang{L}$-tuple $R_{\lang{L}}$ of monotone relations on $W$ is \emph{generated by the binary relation $R$} if $R^{\Delta}_{\alpha} = \Delta_{\alpha}[R]$ for each $\Delta_{\alpha} \in \lang{L}$. An \emph{$\lang{L}$-unimodal frame} over $W$ is an $\lang{L}$-modal frame over $W$ such that $R_{\lang{L}}$ is generated by some underlying relation $R$.
\end{definition}

  For each $\lang{L}$-tuple of monotone relations on $W$ generated by a single relation, without loss of generality we can always take the generating relation to be $R \assign \bigcap_{\Delta_{\alpha} \in \lang{L}} R^{\Delta}_{\alpha}$. It follows that $\lang{L}$-tuples of monotone relations generated by a single relation are in bijective correspondence with \emph{$\lang{L}$-convex relations} on the underlying set of $W$, i.e.\ binary relations $R$ such that $R = \bigcap_{\Delta_{\alpha} \in \lang{L}} \Delta_{\alpha}[R]$. It is therefore largely a matter of taste whether we choose to view $\lang{L}$-unimodal frames as posets equipped with $\lang{L}$-tuples of monotone relations or as posets equipped with an underlying $\lang{L}$-convex relation.

  The goal of this section is to axiomatize the universal class (or equivalently, the quasivariety) generated by the complex algebras of $\lang{L}$-unimodal frames, just as we axiomatized the universal class (or equivalently, the variety) generated by the complex algebras of $\Delta_{\alpha}$-frames in the previous section.

  It turns out that for this purpose it suffices to axiomatize the interaction between each \emph{pair} of modalities. We therefore need to consider $12$ different axioms. These are the axioms labelled $(\Delta_{\alpha}, \nabla_{\!\beta})$ shown in Figure~\ref{fig:unimodal-axioms}. Taking order duality into account, we only need to consider 6 conditions, which furthermore naturally split into 3 groups, as indicated in the figure.

\begin{figure}[t]
\caption{Modal axioms for the full unimodal signature}
\label{fig:unimodal-axioms}

\begin{align}
  \Box_{-} a \wedge \Box_{+} (a \vee b) \leq \Box_{+} b \tag{$\Box_{+}, \Box_{-}$}\\
  \Box_{+} a \wedge \Box_{-} (a \wedge b) \leq \Box_{-} b \tag{$\Box_{-}, \Box_{+}$}\\
  \Diamond_{+} b \leq \Diamond_{+} (a \wedge b) \vee \Diamond_{-} a \tag{$\Diamond_{+}, \Diamond_{-}$}\\
  \Diamond_{-} b \leq \Diamond_{-} (a \vee b) \vee \Diamond_{+} a \tag{$\Diamond_{-}, \Diamond_{+}$}
\end{align}
\begin{align}
  \Diamond_{-} a \wedge c \leq \Box_{+} a 	& \Rightarrow c \leq \Box_{+} a \tag{$\Box_{+}, \Diamond_{-}$}\\
  \Diamond_{+} a \wedge c \leq \Box_{-} a 	& \Rightarrow c \leq \Box_{-} a \tag{$\Box_{-}, \Diamond_{+}$}\\
  \Diamond_{+} a \leq \Box_{-} a \vee c 	& \Rightarrow \Diamond_{+} a \leq c \tag{$\Diamond_{+}, \Box_{-}$}\\
  \Diamond_{-} a \leq \Box_{+} a \vee c 	& \Rightarrow \Diamond_{-} a \leq c \tag{$\Diamond_{-}, \Box_{+}$}
\end{align}
\begin{align}
  \Diamond_{+} b \wedge c \leq \Box_{+} a  	& \Rightarrow \Box_{+} (a \vee b) \wedge c \leq \Box_{+} a \tag{$\Box_{+}, \Diamond_{+}$}\\
  \Diamond_{-} b \wedge c \leq \Box_{-} a 	& \Rightarrow \Box_{-} (a \wedge b) \wedge c \leq \Box_{-} a \tag{$\Box_{-}, \Diamond_{-}$}\\
  \Diamond_{+} a \leq \Box_{+} b \vee c 	& \Rightarrow \Diamond_{+} a \leq \Diamond_{+} (a \wedge b) \vee c \tag{$\Diamond_{+}, \Box_{+}$}\\
  \Diamond_{-} a \leq \Box_{-} b \vee c  	& \Rightarrow \Diamond_{-} a \leq \Diamond_{-} (a \vee b) \vee c \tag{$\Diamond_{-}, \Box_{-}$}
\end{align}

\end{figure}

\begin{figure}[t]
\caption{Equational formulations of the modal axioms for the full unimodal signature}
\label{fig:equational-unimodal-axioms}

\begin{align*}
  \Box_{-} a \wedge \Box_{+} (a \vee b) \leq \Box_{+} b \tag*{$(\Box_{+}, \Box_{-})$}\\
  \Box_{+} a \wedge \Box_{-} (a \wedge b) \leq \Box_{-} b \tag*{$(\Box_{-}, \Box_{+})$}\\
  \Diamond_{+} b \leq \Diamond_{+} (a \wedge b) \vee \Diamond_{-} a \tag*{$(\Diamond_{+}, \Diamond_{-})$}\\
  \Diamond_{-} b \leq \Diamond_{-} (a \vee b) \vee \Diamond_{+} a \tag*{$(\Diamond_{-}, \Diamond_{+})$}
\end{align*}
\begin{align*}
  \Diamond_{-} a \rightarrow \Box_{+} a \leq \Box_{+} a \tag*{$(\Box_{+}, \Diamond_{-})$}\\
  \Diamond_{+} a \rightarrow \Box_{-} a \leq \Box_{-} a \tag*{$(\Box_{-}, \Diamond_{+})$}\\
  \Diamond_{+} a \leq \Diamond_{+} a \minus \Box_{-} a \tag*{$(\Diamond_{+}, \Box_{-})$}\\
  \Diamond_{-} a \leq \Diamond_{-} a \minus \Box_{+} a \tag*{$(\Diamond_{-}, \Box_{+})$}
\end{align*}
\begin{align*}
  \Diamond_{+} b \rightarrow \Box_{+} a \leq \Box_{+} (a \vee b) \rightarrow \Box_{+} a \tag*{$(\Box_{+}, \Diamond_{+})$}\\
  \Diamond_{-} b \rightarrow \Box_{-} a \leq \Box_{-} (a \wedge b) \rightarrow \Box_{-} a \tag*{$(\Box_{-}, \Diamond_{-})$}\\
  \Diamond_{+} a \minus \Diamond_{+} (a \wedge b) \leq \Diamond_{+} a \minus \Box_{+} b\tag*{$(\Diamond_{+}, \Box_{+})$}\\
  \Diamond_{-} a \minus \Diamond_{-} (a \vee b) \leq \Diamond_{-} a \minus \Box_{-} b \tag*{$(\Diamond_{-}, \Box_{-})$}
\end{align*}

\end{figure}

  We achieve our goal in three steps. First, we determine the frame conditions which correspond to the quasiequations $(\Delta_{\alpha}, \nabla_{\!\beta})$. Secondly, we observe that although these frame conditions do not state that the modal accessibility relations are generated by a single underlying relation, they do so if we restrict to a certain wide class of frames, which in particular includes all canonical frames. Finally, we show that if an algebra satisfies the quasiequation $(\Delta_{\alpha}, \nabla_{\!\beta})$, then its canonical frame satisfies the corresponding frame condition. We obtain as a corollary that for most choices of $\lang{L}$ the class of $\lang{L}$-unimodal frames is not definable by means of universal sentences in the signature of $\lang{L}$-modal algebras (or even bi-Heyting $\lang{L}$-modal algebras).

\begin{definition}
  An \emph{$\lang{L}$-unimodal algebra} is an $\lang{L}$-modal algebra which satisfies the quasiequations $(\Delta_{\alpha}, \nabla_{\!\beta})$ for all $\Delta_{\alpha}, \nabla_{\!\beta} \in \lang{L}$.
\end{definition}

  All of the quasiequations $(\Delta_{\alpha}, \nabla_{\!\beta})$ may be expressed equationally in the presence of Heyting implication and co-implication, as shown in Figure~\ref{fig:equational-unimodal-axioms}. Bi-Heyting $\lang{L}$-unimodal algebras therefore in fact form a variety, as do Heyting $\lang{L}$-unimodal algebras for certain choices of $\lang{L}$.

\begin{proposition}
  The class of bi-Heyting $\lang{L}$-unimodal algebras is a variety for each $\lang{L}$. The class of Heyting $\lang{L}$-unimodal algebras is a variety if $\lang{L} = \{ \Box_{-}, \Box_{+} \}$ or $\lang{L} = \{ \Diamond_{-}, \Diamond_{+} \}$ or $\lang{L}$ contains only one modality.
\end{proposition}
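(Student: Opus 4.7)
The plan is to reduce the proposition to Figure~\ref{fig:equational-unimodal-axioms}: I would first show that each of the twelve quasiequations $(\Delta_\alpha, \nabla_\beta)$ from Figure~\ref{fig:unimodal-axioms} is equivalent to its reformulation in Figure~\ref{fig:equational-unimodal-axioms} whenever the requisite Heyting implication or co-implication is available in the signature, and then examine for each of the three signatures mentioned in the proposition which axioms apply and whether they can be stated equationally.

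The equivalences follow by a standard adjunction argument. For axioms whose conclusion has the form $c \leq \Box_\alpha(\cdots)$ (or a conjunction with such a term on the left), the Heyting adjunction $a \wedge b \leq c \iff b \leq a \rightarrow c$ delivers the equational form. For instance, for $(\Box_{+}, \Diamond_{-})$, the equation $\Diamond_{-} a \rightarrow \Box_{+} a \leq \Box_{+} a$ together with $\Diamond_{-} a \wedge c \leq \Box_{+} a$ forces $c \leq \Diamond_{-} a \rightarrow \Box_{+} a \leq \Box_{+} a$; conversely, specializing the quasiequation at $c \assign \Diamond_{-} a \rightarrow \Box_{+} a$ recovers the equation. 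Third-group axioms such as $(\Box_{+}, \Diamond_{+})$ are handled analogously by specializing at $c \assign \Diamond_{+} b \rightarrow \Box_{+} a$. The remaining axioms, whose conclusions have the form $\Diamond_\alpha a \leq \cdots$, are order-dual and rely on the co-Heyting adjunction $a \leq b \vee c \iff a \minus b \leq c$. The four first-group axioms are already equations in the distributive lattice signature and require no reformulation.

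With the equivalences in hand, the three cases of the proposition follow by inspection. In the bi-Heyting case both $\rightarrow$ and $\minus$ are available, so all twelve quasiequations are equationally expressible and the class of bi-Heyting $\lang{L}$-unimodal algebras is a subvariety of the variety of bi-Heyting $\lang{L}$-modal algebras. For Heyting $\lang{L}$-unimodal algebras with $\lang{L} = \{\Box_{+}, \Box_{-}\}$ or $\lang{L} = \{\Diamond_{+}, \Diamond_{-}\}$, only the relevant first-group axioms apply, and these are already equational, so co-implication is not needed. Finally, if $\lang{L}$ consists of a single modality, then no pair of distinct modalities in $\lang{L}$ exists, so no compatibility axioms apply and the class coincides with the variety of Heyting $\lang{L}$-modal algebras. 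The only real work is the twelve adjoint reformulations; once Figure~\ref{fig:equational-unimodal-axioms} is verified, the result is immediate, the main ``obstacle'' being merely the bookkeeping needed to pair each quasiequation with the correct Heyting or co-Heyting operation.
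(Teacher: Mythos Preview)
Your proposal is correct and follows the same approach the paper takes: the proposition is stated immediately after the observation that the quasiequations in Figure~\ref{fig:unimodal-axioms} are equivalent to the equations in Figure~\ref{fig:equational-unimodal-axioms} in the presence of the relevant Heyting or co-Heyting operation, and the paper offers no further argument. Your sketch simply spells out the adjunction verifications and the case analysis that the paper leaves implicit.
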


  We now show that the quasiequations in Figure~\ref{fig:unimodal-axioms} correspond precisely to the frame conditions in Figure~\ref{fig:unimodal-frame-conditions}. Recall that the quasiequation $(\Delta_{\alpha}, \nabla_{\!\beta})$ \emph{corresponds to} the frame condition $(\Delta_{\alpha}, \nabla_{\!\beta})$ in case an $\lang{L}$-modal frame satisfies the frame condition $(\Delta_{\alpha}, \nabla_{\!\beta})$ if and only if its complex algebra satisfies the quasiequation~$(\Delta_{\alpha}, \nabla_{\!\beta})$. This of course presupposes that the modalities $\Delta_{\alpha}$ and $\nabla_{\!\beta}$ belong to the signature $\lang{L}$.

  In the following proofs, we use $a \in^{\alpha} A$ to abbreviate $a \in A$ for $\alpha = +$ and $a \notin A$ for $\alpha = -$. As expected, $\wedge^{+} = \wedge$ and $\wedge^{-} = \vee$, while $\vee^{+} = \vee$ and $\vee^{-} = \wedge$. If $\filter{V}$ is a prime filter on $\alg{A}$, then $\filter{V}^{+} = \filter{V}$ and $\filter{V}^{-} = \alg{A} \setminus \filter{V}$.

\begin{figure}[t]
\caption{Unimodal frame conditions}
\label{fig:unimodal-frame-conditions}

\begin{align*}
  R^{\Box}_{+} \subseteq (R^{\Box}_{+} \cap R^{\Box}_{-}) \circ {\leq} \tag*{$(\Box_{+}, \Box_{-})$}\\
  R^{\Box}_{-} \subseteq (R^{\Box}_{-} \cap R^{\Box}_{+}) \circ {\geq} \tag*{$(\Box_{-}, \Box_{+})$}\\
  R^{\Diamond}_{+} \subseteq (R^{\Diamond}_{+} \cap R^{\Diamond}_{-}) \circ {\geq} \tag*{$(\Diamond_{+}, \Diamond_{-})$}\\
  R^{\Diamond}_{-} \subseteq (R^{\Diamond}_{-} \cap R^{\Diamond}_{+}) \circ {\leq} \tag*{$(\Diamond_{-}, \Diamond_{+})$}
\end{align*}
\begin{align*}
  R^{\Box}_{+} \subseteq {\leq} \circ (R^{\Box}_{+} \cap R^{\Diamond}_{-}) \tag*{$(\Box_{+}, \Diamond_{-})$}\\
  R^{\Box}_{-} \subseteq {\leq} \circ (R^{\Box}_{-} \cap R^{\Diamond}_{+}) \tag*{$(\Box_{-}, \Diamond_{+})$}\\
  R^{\Diamond}_{+} \subseteq {\geq} \circ (R^{\Diamond}_{+} \cap R^{\Box}_{-}) \tag*{$(\Diamond_{+}, \Box_{-})$}\\
  R^{\Diamond}_{-} \subseteq {\geq} \circ (R^{\Diamond}_{-} \cap R^{\Box}_{+}) \tag*{$(\Diamond_{-}, \Box_{+})$}
\end{align*}
\begin{align*}
  \text{if } u R^{\Box}_{+} v, \text{ then there are } u' \geq u \text{ and } v' \leq v \text{ such that } u R^{\Box}_{+} v', u R^{\Box}_{+} v', \text{ and } u' R^{\Diamond}_{+} v' \tag*{$(\Box_{+}, \Diamond_{+})$}\\
  \text{if } u R^{\Box}_{-} v, \text{ then there are } u' \geq u \text{ and } v' \geq v \text{ such that } u R^{\Box}_{-} v', u R^{\Box}_{-} v', \text{ and } u' R^{\Diamond}_{-} v' \tag*{$(\Box_{-}, \Diamond_{-})$}\\
  \text{if } u R^{\Diamond}_{+} v, \text{ then there are } u' \leq u \text{ and } v' \geq v \text{ such that } u R^{\Diamond}_{+} v', u R^{\Diamond}_{+} v', \text{ and } u' R^{\Box}_{+} v' \tag*{$(\Diamond_{+}, \Box_{+})$}\\
  \text{if } u R^{\Diamond}_{-} v, \text{ then there are } u' \leq u \text{ and } v' \leq v \text{ such that } u R^{\Diamond}_{-} v', u R^{\Diamond}_{-} v', \text{ and } u' R^{\Box}_{-} v' \tag*{$(\Diamond_{-}, \Box_{-})$}
\end{align*}

\end{figure}

\begin{theorem}[Correspondence] \label{thm:correspondence}
  Each quasiequation $(\Delta_{\alpha},\! \nabla_{\!\beta})$ corresponds to the frame condition $(\Delta_{\alpha},\! \nabla_{\!\beta})$.
\end{theorem}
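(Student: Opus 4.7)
The plan is to treat each of the twelve correspondences $(\Delta_{\alpha}, \nabla_{\!\beta})$ separately, halving the work via order duality: by Proposition~\ref{prop:symmetries} the quasiequation and the frame condition for an order-dual pair are obtained from those for the given pair by passing to $\modframe{F}^{-}$ and $\alg{A}^{-}$, so it suffices to handle only the six box-first pairs listed in the upper half of each block in Figure~\ref{fig:unimodal-frame-conditions}.

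The direction from frame condition to quasiequation proceeds by unfolding the semantic clauses for $\Box_{\alpha}$ and $\Diamond_{\alpha}$ on upsets. For $(\Box_{+}, \Box_{-})$: given $u \in \Box_{-} a \cap \Box_{+}(a \vee b)$ and $u R^{\Box}_{+} v$, the frame condition supplies $w \leq v$ with $u R^{\Box}_{+} w$ and $u R^{\Box}_{-} w$; then $w \notin a$ forces $w \in b$, and hence $v \in b$ since $b$ is upward closed. For $(\Box_{+}, \Diamond_{-})$ I would argue by contraposition: given $u \in c$ with $u R^{\Box}_{+} v$ and $v \notin a$, the frame condition yields $w \geq u$ with $w R^{\Box}_{+} v$ and $w R^{\Diamond}_{-} v$, so $w$ belongs to $c \cap \Diamond_{-} a$ but not to $\Box_{+} a$. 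The group-$3$ argument is similar but extracts two witnesses $u'$ and $v'$ simultaneously.

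For the converse direction, assuming the frame condition fails at a pair $(u, v)$ the task is to construct concrete upsets of $\pair{W}{\leq}$ falsifying the quasiequation in $\complex{\modframe{F}}$. For $(\Box_{+}, \Box_{-})$ I would take $b := W \setminus {\downarrow} v$, so that $v \notin b$ and hence $u \notin \Box_{+} b$, and $a := \{w : u R^{\Box}_{+} w \text{ and } w \leq v\}^{\uparrow}$; the failure hypothesis combined with $R^{\Box}_{-}$-monotonicity ensures that no $R^{\Box}_{-}$-successor of $u$ lies in $a$, yielding $u \in \Box_{-} a$, while every $R^{\Box}_{+}$-successor of $u$ lies in $a \cup b$, yielding $u \in \Box_{+}(a \vee b)$. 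For $(\Box_{+}, \Diamond_{-})$ I would instead take $a := W \setminus {\downarrow} v$ and $c := {\uparrow} u$; the monotonicity of $R^{\Diamond}_{-}$ and $R^{\Box}_{+}$ then ensures that any $x \geq u$ witnessing $\Diamond_{-} a \wedge c \not\leq \Box_{+} a$ would give $x R^{\Diamond}_{-} v$ and $x R^{\Box}_{+} v$, contradicting the failure hypothesis. Group~$3$ combines both tricks with a further upset engineered around the two-coordinate witness.

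The main obstacle I expect is the group-$3$ completeness argument, where the frame condition is existential in \emph{both} coordinates of the accessibility relation and the quasiequation $\Diamond_{+} b \wedge c \leq \Box_{+} a \Rightarrow \Box_{+}(a \vee b) \wedge c \leq \Box_{+} a$ involves a nontrivial auxiliary $c$. The three upsets $a, b, c$ must be pinned down so that $\Diamond_{+} b \wedge c$ stays inside $\Box_{+} a$ throughout the frame while $\Box_{+}(a \vee b) \wedge c$ escapes $\Box_{+} a$ at the point $u$; once this construction is fixed, order duality carries the result through the remaining cases.
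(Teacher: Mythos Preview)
Your approach is essentially the same as the paper's: reduce via order duality and then, for each representative pair, verify soundness by unfolding the semantic clauses and prove completeness by exhibiting explicit upsets that falsify the quasiequation when the frame condition fails. Your concrete constructions for $(\Box_{+}, \Box_{-})$ and $(\Box_{+}, \Diamond_{-})$ differ in detail from the paper's (the paper takes $a$ to be the complement of the $R^{\Box}_{-}$-successor set of $u$ rather than your upward closure), but both work for the same monotonicity reasons.

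The one place where you stop short is exactly where the paper invests its effort: the group-3 case. For the direction ``quasiequation $\Rightarrow$ frame condition'' in $(\Diamond_{\alpha}, \Box_{\alpha})$ the paper takes $a$ to be the principal up$^{\alpha}$set at $v$, defines $b$ so that $w \in^{\alpha} b$ iff some $u' \leq u$ has $u' R^{\Diamond}_{\alpha} v$ and $u' R^{\Box}_{\alpha} w$, and takes $c$ to be the complement of ${\downarrow} u$; this is the ``further upset engineered around the two-coordinate witness'' you allude to. For the converse direction the paper argues directly rather than by contraposition: assuming the frame condition and the premise $\Diamond_{\alpha} a \leq \Box_{\alpha} b \vee c$, it uses the witnesses $u', v'$ to push membership in $\Box_{\alpha} b$ down to $u'$ and then back up to obtain $u \in \Diamond_{\alpha}(a \wedge^{\alpha} b)$. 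Your sketch is compatible with this, but you should be aware that the three upsets really do need to be chosen with this level of care; the ``combine both tricks'' description undersells the construction.
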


\begin{proof}
  In the left-to-right direction, we only verify the case of $(\Diamond_{\alpha}, \Box_{\alpha})$, hence by order duality also of~$(\Box_{\alpha}, \Diamond_{\alpha})$. The other left-to-right implications are easier and are therefore left to the interested reader as an exercise.

  Suppose that $u R^{\Diamond}_{\alpha} v$ but there are no $u' \leq u$ and $v' \geq^{\alpha} v$ such that $u' R^{\Diamond}_{\alpha} v$, $u R^{\Diamond}_{\alpha} v'$ and $u' R^{\Box}_{\alpha} v'$. Then let $w \in^{\alpha} a$ if and only if $w \geq^{\alpha} v$, let $w \in^{\alpha} b$ if and only if there is some $u' \leq u$ such that $u' R^{\Diamond}_{\alpha} v$ and $u' R^{\Box}_{\alpha} w$, and let $w \notin c$ if and only if $w \leq u$. It follows that $\Diamond_{\alpha} a \leq \Box_{\alpha} b \vee c$, but $u \in \Diamond_{\alpha} a$, $u \notin \Diamond_{+}(a \wedge^{\alpha} b)$ and $u \notin c$.

  To verify the right-to-left implications, by order duality it suffices to deal with the three cases $(\Box_{\alpha}, \Box_{-\alpha})$ and $(\Diamond_{\alpha}, \Box_{-\alpha})$ and $(\Diamond_{\alpha}, \Box_{\alpha})$. We deal with them in this order.

  Suppose that $u R^{\Box}_{\alpha} v$ but there is no $v^\prime \leq^{\alpha} v$ such that $u R^{\Box}_{\alpha} v^\prime$ and $u R^{\Box}_{-\alpha} v^\prime$. Then let $w \in^{-\alpha} a$ if and only if $u R^{\Box}_{-\alpha} w$ and let $w \in^{-\alpha} b$ if and only if $w \leq^{\alpha} v$. It follows that $u \in \Box_{-\alpha} a$ by definition and $u \in \Box_{\alpha} (a \vee^{\alpha} b)$ by the assumption about the frame, but $u \notin \Box_{\alpha} b$.

  Now suppose that $u R^{\Diamond}_{\alpha} v$ but there is no $u^\prime \leq u$ such that $u^\prime R^{\Diamond}_{\alpha} v$ and $u^\prime R^{\Box}_{-\alpha} v$. Then let $w \in^{\alpha} a$ if and only if $w \geq^{\alpha} v$, and let $w \notin c$ if and only if $w \leq u$. It follows that $\Diamond_{\alpha} a \leq \Box_{-\alpha} a \vee c$, but $u \in \Diamond_{\alpha} a$ and $u \notin c$.

  Finally, we deal with the case $(\Diamond_{\alpha}, \Box_{\alpha})$. Suppose that $\Diamond_{\alpha} a \leq \Box_{\alpha} b \vee c$ and $u R^{\Diamond}_{\alpha} v$ for some $u \notin c$ and $v \in a$. Then there are $u' \leq u$ and $v' \geq^{\alpha} v$ such that $u' R^{\Diamond}_{\alpha} v$, $u R^{\Diamond}_{\alpha} v'$ and $u' R^{\Box}_{\alpha} v'$, hence $u' \in \Diamond_{\alpha} a$. It follows that $u' \notin c$, therefore $u' \in \Box_{\alpha} b$, $v' \in^{\alpha} b$, and $u \in \Diamond_{\alpha} (a \wedge^{\alpha} b)$.
\end{proof}

  The first 8 conditions $(\Delta_{\alpha}, \nabla_{\!-\alpha})$ in Figure \ref{fig:unimodal-frame-conditions} state precisely that $R^{\Delta}_{\alpha} = \Delta_{\alpha}[R^{\Delta}_{\alpha} \cap R^{\nabla}_{-\alpha}]$, i.e.\ that the relations $R^{\Delta}_{\alpha}$ and $R^{\nabla}_{-\alpha}$ are generated by a single relation. By contrast, the last 4 conditions $(\Delta_{\alpha}, \nabla_{\!\alpha})$ state that for each $u R^{\Delta}_{\alpha} v$ there are $(u', v') \leq (u, v)$ in $(U, \leq) \times_{\Delta_{\alpha}} (U, \leq)$ such that $u' R^{\Delta}_{\alpha} v$, $u R^{\Delta}_{\alpha} v'$, and $u' R^{\nabla}_{\alpha} v'$. This condition is depicted on the left-hand side of Figure \ref{fig:condition-diamond-plus-box-plus} for $R^{\Diamond}_{+}$ and $R^{\Box}_{+}$ alongside the condition which we would have liked to capture, namely $R^{\Diamond}_{+}$ and $R^{\Box}_{+}$ being generated by a single relation.

  If the relations $R^{\Delta}_{\alpha}$ and $R^{\nabla}_{\beta}$ have the same tonicity in at least one of their two arguments, Theorem~\ref{thm:correspondence} tells us that we can characterize the class of frames such that $R^{\Delta}_{\alpha}$ and $R^{\nabla}_{\beta}$ are generated by a single underlying relation by means of a quasiequation in the signature of $\lang{L}$-modal algebras.

  The situation is more complicated when the relations $R^{\Delta}_{\alpha}$ and $R^{\nabla}_{\beta}$ have opposite tonicities in both positions. The following examples show that the conditions $(\Delta_{\alpha}, \nabla_{\beta})$ do not capture precisely the class of frames such that $R^{\Delta}_{\alpha}$ and $R^{\nabla}_{\beta}$ are generated by a single underlying relation, i.e.\ the class of frames such that $R^{\Diamond}_{\alpha} = \Diamond_{\alpha}[R^{\Diamond}_{\alpha} \cap R^{\Box}_{\alpha}]$ and $R^{\Box}_{\alpha} = \Box_{\alpha}[R^{\Box}_{\alpha} \cap R^{\Diamond}_{\alpha}]$.

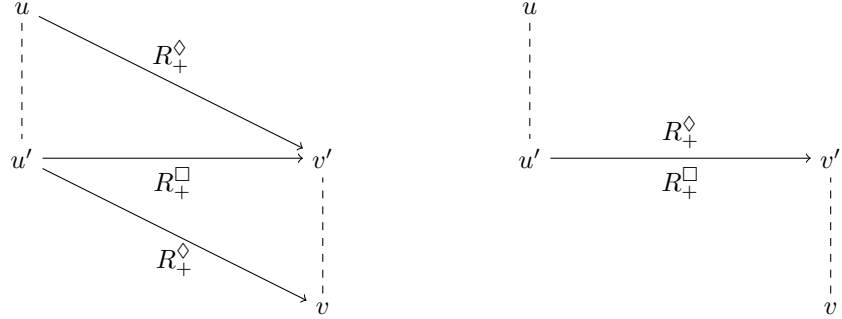
\begin{figure}[t]
\caption{The frame condition $(\Diamond_{+}, \Box_{+})$ vs.\ the property of being generated by a single relation}
\label{fig:condition-diamond-plus-box-plus}

\medskip

\begin{center}

\begin{tikzpicture}
\node (u) at (0,4) {$u$};
\node (v) at (4,0) {$v$};
\node (uprime) at (0,2) {$u'$};
\node (vprime) at (4,2) {$v'$};
\path[draw,dashed] (u) -- (uprime);
\path[draw,dashed] (v) -- (vprime);
\path[draw,->] (u) -- node[above] {$R^{\Diamond}_{+}$} (vprime);
\path[draw,->] (uprime) -- node[below] {$R^{\Diamond}_{+}$} (v);
\path[draw,->] (uprime) -- node[below] {$R^{\Box}_{+}$} (vprime);
\end{tikzpicture}
\qquad\qquad\qquad
\begin{tikzpicture}
\node (u) at (0,4) {$u$};
\node (v) at (4,0) {$v$};
\node (uprime) at (0,2) {$u'$};
\node (vprime) at (4,2) {$v'$};
\path[draw,dashed] (u) -- (uprime);
\path[draw,dashed] (v) -- (vprime);
\path[draw,->] (uprime) -- node[below] {$R^{\Box}_{+}$} node[above] {$R^{\Diamond}_{+}$} (vprime);
\end{tikzpicture}

\end{center}

\end{figure}

\begin{example} \label{ex:non-correspondence-box-diamond}
  Let $W = [0, 1] \times \{0, 1 \}$, let $(q, i) R^{\Box}_{+} (r, j)$ if and only if $i = 0$, $j = 1$ and $q \leq r$, and let $(q, i) R^{\Diamond}_{+} (r, j)$ if and only if $i = 0$, $j = 1$ and $q > r$. This frame satisfies $(\Diamond_{+}, \Box_{+})$ but $R^{\Box}_{+} \cap R^{\Diamond}_{+} = \emptyset$.
\end{example}

  This example, however, does not rule out the possibility that the conjunction of all the applicable frame conditions for some modal signature $\lang{L}$ does imply that $R^{\Diamond}_{+} = \Diamond_{+}[R^{\Box}_{+} \cap R^{\Diamond}_{+}]$. We rule this out by brute force.

\begin{example} \label{ex:non-correspondence-arbitrary-language}
  Let us call the last four pairs of modalities in Figure \ref{fig:unimodal-frame-conditions} (as well as the corresponding frame conditions) \emph{problematic}. We assume that $\lang{L}$ contains a problematic pair of modalities. Let $\modframe{F}_{0}$ be any $\lang{L}$-modal frame which at least contains some pair of points connected by some accessibility relation.

  We define $\modframe{F}_{2i+1}$ by adding enough points to $\modframe{F}_{2i}$ to make all of the problematic conditions $(\Delta_{\alpha}, \nabla_{\!\beta})$ hold for $\Delta_{\alpha}, \nabla_{\!\beta} \in \lang{L}$. That is, for any problematic pair of modalities $\Delta_{\alpha}$ and $\nabla_{\!\beta}$ in $\lang{L}$ and any pair of points $u$ and $v$ such that $u R^{\Delta}_{\alpha} v$, we add a pair of points $u^\prime$ and $v^\prime$ which ``complete'' the appropriate version of the \emph{left} part of Figure \ref{fig:condition-diamond-plus-box-plus}. (It would be straightforward but tedious to explicitly write out what this means. For instance in case $\Delta_{\alpha} = \Box_{+}$ and $\nabla_{\!\beta} = \Diamond_{+}$, we require that $u^\prime \leq w$ if and only if $w = u^\prime$ or $u \leq w$, that $u^\prime R^{\Diamond}_{+} w$ if and only if $w \leq v$, that $u^\prime R^{\Box}_{+} w$ if and only if $w = v^\prime$, and so on.) We define $\modframe{F}_{2i+2}$ by adding enough points to $\modframe{F}_{2i+1}$ to make all of the applicable \emph{un}problematic conditions $(\Delta_{\alpha}, \nabla_{\!\beta})$ hold. That is, for any unproblematic pair of modalities $\Delta_{\alpha}$ and $\nabla_{\!\beta}$ in $\lang{L}$ and any pair of points $u$ and $v$ such that $u R^{\Delta}_{\alpha} v$, we add a pair of points $u^\prime$ and $v^\prime$ such that $u^\prime (R^{\Delta}_{\alpha} \cap R^{\nabla}_{\beta}) v^\prime$ which ``complete'' the appropriate version of the \emph{right} part of Figure \ref{fig:condition-diamond-plus-box-plus}.

  We now define $\modframe{F}$ as the union of the sequence of frames $\modframe{F}_{i}$ for $i \in \omega$. The frame $\modframe{F}$ was constructed to satisfy all of the applicable conditions $(\Delta_{\alpha}, \nabla_{\!\beta})$. However, if $\Delta_{\alpha}$ and $\nabla_{\!\beta}$ is a problematic pair of modalities, then $u (R^{\Delta}_{\alpha} \cap R^{\nabla}_{\beta}) v$ holds in $\modframe{F}$ if and only if it holds already in the original frame $\modframe{F}_{0}$, as we never add any pair of points connected by $R^{\Delta}_{\alpha} \cap R^{\nabla}_{\beta}$. In particular, $u (R^{\Delta}_{\alpha} \cap R^{\nabla}_{\beta}) v$ never holds unless $u$ and $v$ are points in the original frame $\modframe{F}_{0}$. The problematic pair of relations $R^{\Delta}_{\alpha}$ and $R^{\nabla}_{\beta}$ is therefore \emph{never} generated by a single relation on the frame $\modframe{F}$, even though $\modframe{F}$ by construction satisfies $(\Delta_{\alpha}, \nabla_{\!\beta})$ for all $\Delta_{\alpha}$ and $\nabla_{\!\beta}$ in $\lang{L}$.
\end{example}

  Although the desired equivalence between the conditions in Figure~\ref{fig:unimodal-frame-conditions} and being a unimodal frame does not hold in full generality, it does hold on a wide class of frames which includes all canonical frames.

\begin{definition} \label{def:minimally-generated}
  Let $\pair{U}{\leq}$ and $\pair{V}{\sqleq}$ be posets. A set $X \subseteq U$ is \emph{minimally generated} if for each $u \in X$ the set $X$ has a minimal element below $u$. A set $X \subseteq U \times V$ is \emph{componentwise minimally generated} if for each $(u,v) \in X$, both of the sets $\set{w \in V}{uXw \textrm{ and } w \sqleq v}$ and $\set{w \in V}{wXv \textrm{ and } w \leq u}$ have a minimal element. A $\Delta_{\alpha}$-relation on $\pair{U}{\leq}$ is \emph{(componentwise) minimally ge\-ne\-ra\-ted} if it is (componentwise) minimally generated as a subset of $\pair{U}{\leq} \times_{\Delta_{\alpha}} \pair{U}{\leq}$. An $\lang{L}$-modal frame $\modframe{F}$ is (componentwise) minimally generated in case each $R^{\Delta}_{\alpha}$ in $\lang{L}$ is a (componentwise) minimally generated $\Delta_{\alpha}$-relation.
\end{definition}

  Each componentwise minimally generated relation is in particular minimally generated.

\begin{proposition} \label{prop:unimodal-minimally-generated}
  Let $\modframe{F}$ be a minimally generated $\lang{L}$-modal frame. Then $\modframe{F}$ is an $\lang{L}$-unimodal frame (i.e.\ $R^{\Delta}_{\alpha} = \Delta_{\alpha}[R^{\Delta}_{\alpha} \cap R^{\nabla}_{\beta}]$ for all $\Delta_{\alpha}, \nabla_{\beta} \in \lang{L}$) if and only if $\modframe{F}$ satisfies $(\Delta_{\alpha}, \nabla_{\!\beta})$ for each $\Delta_{\alpha}, \nabla_{\!\beta} \in \lang{L}$.
\end{proposition}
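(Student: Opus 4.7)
The plan is to prove both directions of the equivalence separately. The forward direction (pairwise generation implies the frame conditions) requires no appeal to minimality and is essentially a routine verification. The converse is where minimality enters in an essential way.

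For the forward direction, I would fix a pair $\Delta_{\alpha}, \nabla_{\!\beta} \in \lang{L}$ and an element $(u, v) \in R^{\Delta}_{\alpha}$, and unfold the equation $R^{\Delta}_{\alpha} = \Delta_{\alpha}[R^{\Delta}_{\alpha} \cap R^{\nabla}_{\beta}]$ to extract a witness $(u_0, v_0)$ with $u_0 (R^{\Delta}_{\alpha} \cap R^{\nabla}_{\beta}) v_0$ and $(u_0, v_0) \leq (u, v)$ in the product poset $\pair{U}{\leq} \times_{\Delta_{\alpha}} \pair{U}{\leq}$. For the eight unproblematic conditions (where $\beta = -\alpha$) this witness already verifies the corresponding frame condition from Figure~\ref{fig:unimodal-frame-conditions} directly. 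For the four problematic conditions $(\Delta_{\alpha}, \nabla_{\!\alpha})$ with $\nabla \neq \Delta$, one additionally invokes $\Delta_{\alpha}$-monotonicity of $R^{\Delta}_{\alpha}$ on $(u_0, v_0)$ to produce the extra data $u R^{\Delta}_{\alpha} v_0$ and $u_0 R^{\Delta}_{\alpha} v$ that the frame condition asks for.

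For the converse, I assume all the frame conditions and that $\modframe{F}$ is minimally generated. Fix $(u, v) \in R^{\Delta}_{\alpha}$. By minimal generation of $R^{\Delta}_{\alpha}$, pick a minimal $(u_0, v_0) \in R^{\Delta}_{\alpha}$ with $(u_0, v_0) \leq (u, v)$ in $\pair{U}{\leq} \times_{\Delta_{\alpha}} \pair{U}{\leq}$. Then the frame condition $(\Delta_{\alpha}, \nabla_{\!\beta})$ applied at $(u_0, v_0)$ produces auxiliary points in $R^{\Delta}_{\alpha}$ sitting at or below $(u_0, v_0)$ in that same product poset. Minimality forces each of these auxiliary points to coincide with $(u_0, v_0)$, so we obtain $u_0 R^{\nabla}_{\beta} v_0$. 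Consequently $(u, v) \in \Delta_{\alpha}[R^{\Delta}_{\alpha} \cap R^{\nabla}_{\beta}]$; the reverse containment is trivial since $R^{\Delta}_{\alpha}$ is itself a $\Delta_{\alpha}$-relation containing $R^{\Delta}_{\alpha} \cap R^{\nabla}_{\beta}$.

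The delicate step is the converse direction for the problematic pairs, since there the frame condition supplies \emph{two} separate witnesses $u'$ and $v'$ with off-diagonal data $u' R^{\Delta}_{\alpha} v_0$ and $u_0 R^{\Delta}_{\alpha} v'$, rather than a single witness below $(u_0, v_0)$. The point is that this off-diagonal data is precisely what allows minimality to be applied independently in each coordinate, forcing $u' = u_0$ and $v' = v_0$ separately. Order duality (Proposition~\ref{prop:symmetries}) lets me handle all four problematic pairs by treating one representative such as $(\Diamond_{+}, \Box_{+})$, and similarly reduces the unproblematic case analysis to a small number of representatives such as $(\Box_{+}, \Box_{-})$ and $(\Box_{+}, \Diamond_{-})$.
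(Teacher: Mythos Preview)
Your argument correctly establishes the equivalence between the pairwise conditions $R^{\Delta}_{\alpha} = \Delta_{\alpha}[R^{\Delta}_{\alpha} \cap R^{\nabla}_{\beta}]$ and the frame conditions $(\Delta_{\alpha}, \nabla_{\!\beta})$, and both directions match the paper's approach (the first paragraph of its proof); in particular your handling of the problematic pairs via coordinatewise minimality on the off-diagonal witnesses $(u', v_0)$ and $(u_0, v')$ is exactly the key observation.

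There is, however, a gap. You have read the parenthetical ``(i.e.\ \ldots)'' as the working definition of \emph{$\lang{L}$-unimodal frame}, but in the paper an $\lang{L}$-unimodal frame is by definition one whose relations are generated by a \emph{single} relation $R$. The pairwise conditions you prove do not obviously coincide with single-relation generation once $\lvert \lang{L} \rvert \geq 3$, and the paper devotes the second paragraph of its proof to this passage. The fix is easy and already implicit in what you wrote: the minimal pair $(u_0, v_0) \in R^{\Delta}_{\alpha}$ you select below $(u,v)$ does not depend on $\nabla_{\!\beta}$, so running your minimality step for \emph{each} $\nabla_{\!\beta} \in \lang{L}$ in turn yields $u_0 R^{\nabla}_{\beta} v_0$ for all of them simultaneously, hence $(u_0, v_0) \in R := \bigcap_{\nabla_{\!\beta} \in \lang{L}} R^{\nabla}_{\beta}$ and $(u, v) \in \Delta_{\alpha}[R]$. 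For the forward direction, single-relation generation trivially implies the pairwise conditions (since $R \subseteq R^{\Delta}_{\alpha} \cap R^{\nabla}_{\beta}$), after which your routine verification applies unchanged.
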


\begin{proof}
  We first show that $R^{\Delta}_{\alpha} = \Delta_{\alpha}[R^{\Delta}_{\alpha} \cap R^{\nabla}_{\beta}]$ and $(\Delta_{\alpha}, \nabla_{\!\beta})$ are equivalent conditions. For $(\Delta_{\alpha}, \nabla_{\!-\alpha})$ this holds by definition. Now let $\modframe{F}$ be a minimally generated frame which satisfies $(\Delta_{\alpha}, \nabla_{\!\alpha})$. By order duality we may assume without loss of generality that $\Delta = \Diamond$. Now if $u R^{\Diamond}_{\alpha} v$, then there are $u'$ and $v'$ in $\modframe{F}$ such that $u' \leq u$ and $v' \geq^{\alpha} v$ and whenever $u'' R^{\Diamond}_{+} v''$ for some $u'' \leq u$ and $v'' \geq^{\alpha} v'$, then in fact $u'' = u'$ and $v'' = v'$. Such worlds $u'$ and $v'$ will be said to constitute an \emph{$R^{\Diamond}_{\alpha}$-minimal pair}, and $R^{\Box}_{\alpha}$-minimal pairs are defined dually. Applying the condition $(\Diamond_{\alpha}, \Box_{\alpha})$ to $u'$ and $v'$ now yields that there are $u'' \leq u$ and $v'' \geq^{\alpha} v'$ such that $u' R^{\Diamond_{\alpha}} v''$ and $u'' R^{\Diamond}_{\alpha} v'$ and $u'' R^{\Box}_{\alpha} v''$. But then $u'' = u'$ and $v'' = v'$, hence $u' (R^{\Diamond}_{\alpha} \cap R^{\Box}_{\alpha}) v'$.

  Each $\lang{L}$-unimodal frame satisfies $R^{\Delta}_{\alpha} \subseteq \Delta_{\alpha}[R] \subseteq  \Delta_{\alpha}[R^{\Delta}_{\alpha} \cap R^{\nabla}_{\beta}]$ and $\Delta_{\alpha}[R^{\Delta}_{\alpha} \cap R^{\nabla}_{\beta}] \subseteq \Delta_{\alpha}[R^{\Delta_{\alpha}}] \subseteq R^{\Delta}_{\alpha}$. Conversely, suppose that $\modframe{F}$ is a minimally generated $\lang{L}$-frame which satisfies $R^{\Delta}_{\alpha} = \Delta_{\alpha}[R^{\Delta}_{\alpha} \cap R^{\nabla}_{\beta}]$ for each $\Delta_{\alpha}, \nabla_{\!\beta} \in \lang{L}$. It suffices to prove that if $u$ and $v$ form a $R^{\Delta}_{\alpha}$-minimal pair, then $u R^{\nabla}_{\beta} v$ for each $\nabla_{\!\beta} \in \lang{L}$. But this holds because by assumption $R^{\Delta}_{\alpha} = \Delta_{\alpha}[R^{\Delta}_{\alpha} \cap R^{\nabla}_{\beta}]$, hence there are $u' \leq u$ and $v' \geq^{\alpha} v$ in case $\Delta = \Diamond$ and $u' \geq u$ and $v' \leq^{\alpha} v$ in case $\Delta = \Box$ such that $u' R^{\Delta}_{\alpha} v'$ and $u' R^{\nabla}_{\beta} v'$. But by the $R^{\Delta}_{\alpha}$-minimality of $u$ and $v$ we have $u' = u$ and $v' = v$, therefore $u R^{\nabla}_{\beta} v$ and $u R v$ for each $R^{\Delta}_{\alpha}$-minimal pair $u$ and $v$.
\end{proof}

\begin{proposition} \label{prop:canonical-minimally-generated}
  The canonical frame of any $\lang{L}$-modal algebra is (componentwise) minimally generated.
\end{proposition}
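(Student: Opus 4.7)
The plan is to establish componentwise minimal generation for each of the four $\Delta_{\alpha}$-relations on $\canframe{\alg{A}}$ individually; this suffices because every componentwise minimally generated relation is minimally generated. By the order duality recorded in Proposition~\ref{prop:symmetries}, the cases $\Box_{+}$ and $\Diamond_{+}$ are mutually dual, as are $\Box_{-}$ and $\Diamond_{-}$. Moreover, the $\Box_{-}$ argument should be formally identical to the $\Box_{+}$ argument modulo a systematic swap of $\in$ for $\notin$, so I would present only $\Box_{+}$ in detail and invoke the other cases by symmetry.

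Given $\filter{U} R^{\Box}_{+} \filter{V}$, and writing $U$ for the underlying set of prime filters of $\alg{A}$, the relation $R^{\Box}_{+}$ is an upset of $\pair{U}{\geq} \times \pair{U}{\leq}$, so componentwise minimal generation at $(\filter{U}, \filter{V})$ reduces to producing two extremal prime filters: (a) a minimal $\filter{V}' \subseteq \filter{V}$ with $\filter{U} R^{\Box}_{+} \filter{V}'$, and (b) a maximal $\filter{U}' \supseteq \filter{U}$ with $\filter{U}' R^{\Box}_{+} \filter{V}$. I would obtain each by a separate application of Zorn's lemma.

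The enabling observation is that the defining condition $\Box_{+} a \in \filter{X} \Rightarrow a \in \filter{Y}$ of $R^{\Box}_{+}$ is preserved under intersections of descending chains of prime filters in the $\filter{Y}$ slot and under unions of ascending chains of prime filters in the $\filter{X}$ slot, and that in both directions chains of prime filters yield prime filters by the standard argument. For (a), the family $\set{\filter{V}' \in \canframe{\alg{A}}}{\Box_{+}^{-1}[\filter{U}] \subseteq \filter{V}' \subseteq \filter{V}}$ is nonempty (since $\filter{U} R^{\Box}_{+} \filter{V}$ is equivalent to $\Box_{+}^{-1}[\filter{U}] \subseteq \filter{V}$) and is closed under intersections of chains, so Zorn produces a minimum. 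For (b), the family $\set{\filter{U}' \in \canframe{\alg{A}}}{\filter{U} \subseteq \filter{U}' \text{ and } \Box_{+}^{-1}[\filter{U}'] \subseteq \filter{V}}$ is nonempty and closed under unions of chains, so Zorn produces a maximum. I would also use that $\Box_{+}^{-1}[\filter{U}]$ is always a filter, which holds because $\Box_{+}$ is a unital meet-semilattice homomorphism.

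The main point to watch out for, which I do not expect to be a genuine obstacle, is that the negative-polarity relations $R^{\Box}_{-}$ and $R^{\Diamond}_{-}$ are defined in terms of $\notin$ rather than $\in$, so the directions of extremality must flip accordingly and the closure-under-chains claims correspondingly dualize. However, the coordinate reversals built into the product posets $\pair{U}{\leq} \times_{\Delta_{\alpha}} \pair{U}{\leq}$ are designed precisely to absorb these sign changes, so the Zorn argument should transfer case by case without complication.
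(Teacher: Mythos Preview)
Your approach is correct but takes a genuinely different route from the paper's. The paper applies Zorn's lemma within the poset of \emph{all} filters (or ideals) satisfying the relevant constraint---for instance, it extends $\filter{V}$ to a maximal filter $\filter{V}'$ with $\Diamond_{+}[\filter{V}'] \subseteq \filter{U}$---and then invokes the modal axioms (specifically $\Diamond_{+}(a \vee b) = \Diamond_{+} a \vee \Diamond_{+} b$, together with primality of $\filter{U}$) to argue that the extremal filter so obtained is prime. You instead apply Zorn directly within the poset of prime filters, relying on the purely lattice-theoretic fact that the union and the intersection of a chain of prime filters are again prime. Your route is more uniform across the two coordinates and does not really exercise the modal axioms (the observation that $\Box_{+}^{-1}[\filter{U}]$ is a filter is not needed for the Zorn step); the paper's route is the standard canonical-model argument and makes explicit where the operator equations enter. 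One terminological slip: Zorn yields a \emph{minimal} (respectively \emph{maximal}) element, not a minimum or maximum.
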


\begin{proof}
  Let $\alg{A}$ be an $\lang{L}$-modal algebra. It suffices to consider singleton $\lang{L}$. We only deal with the cases $\lang{L} \subseteq \{ \Diamond_{+}, \Diamond_{-} \}$, since the cases $\lang{L} \subseteq \{ \Box_{+}, \Box_{-} \}$ are order dual. 

  If $\lang{L} = \{ \Diamond_{+} \}$, let $\filter{U}$ and $\filter{V}$ be prime filters on $\alg{A}$, and suppose that $\filter{U} R^{\Diamond}_{+} \filter{V}$, that is, $\Diamond_{+} [\filter{V}] \subseteq \filter{U}$. Use Zorn's lemma to extend $\filter{V}$ to a maximal filter $\filter{V}^\prime$ such that $\Diamond_{+} [\filter{V}^\prime] \subseteq \filter{U}$. It suffices to show that $\filter{V}^\prime$ is prime. If $a \notin \filter{V}^\prime$ and $b \notin \filter{V}^\prime$, then there is some $c \in \filter{V}^\prime$ such that $\Diamond_{+} (a \wedge c) \nsubseteq \filter{U}$ and some $d \in \filter{V}^\prime$ such that $\Diamond_{+} (b \wedge d) \nsubseteq \filter{U}$. Without loss of generality, we may assume that $c = d$. Since $\filter{U}$ is prime, $\Diamond_{+} (a \wedge c) \vee \Diamond_{+} (b \wedge c) = \Diamond_{+} ((a \wedge c) \vee (b \wedge c)) = \Diamond_{+} ((a \vee b) \wedge c) \nsubseteq \filter{U}$, hence $a \vee b \notin \filter{V}^\prime$.

  If $\lang{L} = \{ \Diamond_{-} \}$, suppose that $\filter{U} R^{\Diamond}_{-} \filter{V}$, that is, $\Diamond_{-} [\alg{A} \setminus \filter{V}] \subseteq \filter{U}$. Again, use Zorn's lemma to extend $\alg{A} \setminus \filter{V}$ to a maximal ideal $\ideal{W}^\prime$ such that $\Diamond_{-} [\ideal{W}^\prime] \subseteq \filter{U}$. It suffices to show that $\ideal{W}^\prime$ is prime. If $a \notin \filter{W}^\prime$ and $b \notin \filter{W}^\prime$, then, as above, there is some $c \in \ideal{W}^\prime$ such that $\Diamond_{-} (a \vee c) \nsubseteq \filter{U}$ and $\Diamond_{-}(b \vee c) \nsubseteq \filter{U}$, hence $\Diamond_{-}((a \wedge b) \vee c) \nsubseteq \filter{U}$ and $a \wedge b \notin \ideal{W}^\prime$.
\end{proof}

  To show that the quasiequations $(\Delta_{\alpha}, \nabla_{\!\beta})$ axiomatize the universal class generated by $\lang{L}$-unimodal frames, it only remains to prove they are canonical. We do this by showing that the canonical frame of each $\lang{L}$-unimodal algebra is an $\lang{L}$-unimodal frame for every unimodal signature $\lang{L}$.

\begin{theorem}[Canonicity] \label{thm:canonicity}
  The quasivariety of $\lang{L}$-unimodal algebras is canonical for each $\lang{L}$.
\end{theorem}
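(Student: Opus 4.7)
The plan is to reduce canonicity to a direct verification of frame conditions on the canonical frame. Let $\alg{A}$ be an $\lang{L}$-unimodal algebra. By Proposition~\ref{prop:canonical-minimally-generated} the canonical frame $\canframe{\alg{A}}$ is componentwise minimally generated, so by Proposition~\ref{prop:unimodal-minimally-generated} it is itself an $\lang{L}$-unimodal frame as soon as it satisfies each of the frame conditions $(\Delta_\alpha, \nabla_\beta)$ for $\Delta_\alpha, \nabla_\beta \in \lang{L}$. Once this is shown, $\complex{(\canframe{\alg{A}})}$ is the complex algebra of a unimodal frame, hence (by Theorem~\ref{thm:correspondence}) again satisfies every quasiequation $(\Delta_\alpha, \nabla_\beta)$, which is precisely canonicity of the quasivariety.

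The core of the argument is therefore the following: from the assumption that the quasiequation $(\Delta_\alpha, \nabla_\beta)$ holds in $\alg{A}$, deduce that the frame condition $(\Delta_\alpha, \nabla_\beta)$ holds in $\canframe{\alg{A}}$. Order duality and forward--backward duality (Proposition~\ref{prop:symmetries}) reduce the twelve pairs to one representative from each of the three groups of axioms in Figure~\ref{fig:unimodal-axioms}. In each representative case I would use a standard prime-filter separation: given prime filters $\filter{U}, \filter{V}$ with $\filter{U} R^\Delta_\alpha \filter{V}$, assemble a filter $F$ whose generators encode the $\Delta_\alpha$-side constraint on the witnessing prime and an ideal $I$ whose generators encode the $\nabla_\beta$-side constraint together with $\alg{A} \setminus \filter{V}$ (suitably polarised), so that any prime separating $F$ from $I$ automatically sits in the prescribed $\leq$-relation to $\filter{V}$, and then apply the quasiequation to prove $F \cap I = \emptyset$. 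For $(\Box_+, \Box_-)$, for instance, one takes $F \assign \langle \set{a}{\Box_+ a \in \filter{U}} \rangle$ and $I \assign \langle \set{a}{\Box_- a \in \filter{U}} \cup (\alg{A} \setminus \filter{V}) \rangle$; if $a \leq b \vee d$ with $\Box_+ a, \Box_- b \in \filter{U}$ and $d \notin \filter{V}$, then $\Box_+(b \vee d) \in \filter{U}$, so the quasiequation $\Box_- b \wedge \Box_+(b \vee d) \leq \Box_+ d$ forces $d \in \filter{V}$, a contradiction.

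The main obstacle is the problematic group $(\Delta_\alpha, \nabla_\alpha)$, where the frame condition requires building \emph{two} primes $\filter{U}', \filter{V}'$ simultaneously, witnessing $\filter{U}' R^\Delta_\alpha \filter{V}$, $\filter{U} R^\Delta_\alpha \filter{V}'$, and $\filter{U}' R^\nabla_\alpha \filter{V}'$, each in the prescribed $\leq$-relation to $\filter{U}$ or $\filter{V}$. I would handle this by iterated separation: first extract one of the two primes from a filter/ideal pair built by the pattern above, then use the resulting prime to seed the construction of the second pair, applying the relevant quasiequation once more to verify disjointness. The polarity bookkeeping --- taking ideals where the polarity dictates, and moving generators between filter and ideal according to whether a given modality is of $\Box$- or $\Diamond$-type --- is routine but must be performed carefully; the notation $\in^\alpha$ introduced before Theorem~\ref{thm:correspondence} already streamlines this.
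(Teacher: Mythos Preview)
Your overall strategy --- verify the frame conditions $(\Delta_\alpha,\nabla_\beta)$ on the canonical frame and then cite Propositions~\ref{prop:canonical-minimally-generated} and~\ref{prop:unimodal-minimally-generated} --- is exactly the paper's. Your separation argument for $(\Box_+,\Box_-)$ is correct and essentially equivalent to what the paper does (the paper streamlines it by first passing to a minimal $\filter{V}$ via Proposition~\ref{prop:canonical-minimally-generated}, but either route works).

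One small slip: forward--backward duality does not help you cut cases here. Proposition~\ref{prop:symmetries} only supplies $\canframe{(\alg{A}^{-})} = (\canframe{\alg{A}})^{-}$ for modal algebras; there is no converse operation available on the canonical frame of a non-tense algebra. You are left with order duality alone, hence six cases (parametrised by $\alpha$), not three.

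The substantive gap is in the problematic group. Take $(\Diamond_+,\Box_+)$. Having used componentwise minimality to fix $\filter{V}$ maximal with $\Diamond_+[\filter{V}] \subseteq \filter{U}$, you now want a prime $\filter{U}' \subseteq \filter{U}$ containing $\Diamond_+[\filter{V}]$ and disjoint from $\Box_+[\alg{A}\setminus\filter{V}]$. The obvious filter to feed into the separation lemma is $\langle \Diamond_+[\filter{V}]\rangle$, but its generic element has the form $\Diamond_+ v_1 \wedge \cdots \wedge \Diamond_+ v_n$, whereas the quasiequation $(\Diamond_+,\Box_+)$ only fires on a premise of the shape $\Diamond_+ a \leq \Box_+ b \vee c$ with a \emph{single} $\Diamond_+$. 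Without Heyting implication you cannot absorb the extra conjuncts into $c$, so the disjointness check does not go through as you describe it. The paper's remedy is to replace $\langle \Diamond_+[\filter{V}]\rangle$ by the filter $\filter{W} \assign \Diamond_+[\filter{V}] \minus (\alg{A}\setminus\filter{U})$ (closing under ``if $\Diamond_+ v \leq w \vee i$ with $v \in \filter{V}$ and $i \notin \filter{U}$ then $w$ is in'') and to use the quasiequation together with the maximality of $\filter{V}$ to show directly that $\Box_+^{-1}[\filter{W}] \subseteq \filter{V}$; only after this is $\filter{W}$ extended to a prime $\filter{U}'$. Your ``iterated separation'' sketch does not anticipate the need for this intermediate closure, and without it the disjointness step for the problematic cases is not justified.
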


\begin{proof}
  Let $\alg{A}$ be an $\lang{L}$-unimodal algebra which satisfies the quasiequation $(\Delta_{\alpha}, \nabla_{\!\-\alpha})$. It suffices to show that $R^{\Delta}_{\alpha} = \Delta_{\alpha}[R^{\Delta}_{\alpha} \cap R^{\nabla}_{-\alpha}]$ holds in the canonical frame of $\alg{A}$. Appealing to order duality, we may assume without loss of generality that $\nabla = \Box$.

  In case $\Delta = \Box$, suppose that $\filter{U} R^{\Box}_{\alpha} \filter{V}$, that is, $\Box_{\alpha} [\alg{A} \setminus \filter{V}^{\alpha}] \subseteq \alg{A} \setminus \filter{U}$. By Proposition \ref{prop:canonical-minimally-generated}, we may assume that $\filter{V}$ is a minimal$^{\alpha}$ such prime filter. It suffices to show that $\filter{U} R^{\Box}_{-\alpha} \filter{V}$. Suppose therefore that $a \in^{\alpha} \filter{V}$. Then there is some $b \notin \filter{V}^{\alpha}$ such that $\bigcup \Box_{\alpha} (a \vee^{\alpha} b) \nsubseteq \alg{A} \setminus \filter{U}$. If it were the case that $\Box_{-\alpha} a \nsubseteq \alg{A} \setminus \filter{U}$, then we would have $\Box_{\alpha} b \nsubseteq \alg{A} \setminus \filter{U}$, which contradicts $b \notin \filter{V}^{\alpha}$.

  In case $\Delta = \Diamond$, suppose that $\filter{U} R^{\Diamond}_{\alpha} \filter{V}$, that is, $\Diamond_{\alpha} [\filter{V}^{\alpha}] \subseteq \filter{U}$. By Proposition \ref{prop:canonical-minimally-generated}, we may assume that $\filter{U}$ is a minimal such prime filter. It suffices to show that $\filter{U} R^{\Box}_{-\alpha} \filter{V}$. Suppose therefore that $\Box_{-\alpha} a \nsubseteq \alg{A} \setminus \filter{U}$. Then there are some $b \in \filter{V}^{\alpha}$ and $c \notin \filter{U}$ such that $\Diamond_{\alpha} b \leq \Box_{\alpha} a \vee c$, hence $\Diamond_{\alpha} (a \wedge^{\alpha} b) \leq \Box_{-\alpha} (a \wedge b) \vee c$ and $\Diamond_{\alpha} (a \wedge^{\alpha} b) \leq c$. If $a \in \filter{V}^{\alpha}$, then $a \wedge^{\alpha} b \in \filter{V}^{\alpha}$ and $\Diamond_{\alpha} (a \wedge^{\alpha} b) \in \filter{U}$, contradicting $c \notin \filter{U}$.

  The rest of the proof deals with the case $\Delta = \Diamond$ and $\alpha = +$. The case $\Delta = \Diamond$ and $\alpha = -$ is entirely analogous and it would be possibly to do both proofs at the same time as in the case $\Delta = \Box$, but we prefer not to deal with both cases at the same time for the sake of easier readability.

  Suppose that $\filter{U} R^{\Diamond}_{+} \filter{V}$, that is, $\Diamond_{+} [\filter{V}] \subseteq \filter{U}$. By Proposition \ref{prop:canonical-minimally-generated}, we may assume that $\filter{V}$ is a maximal such prime filter. It suffices to find a prime filter $\filter{W} \subseteq \filter{U}$ such that $\Diamond_{+} [\filter{V}] \subseteq \filter{W}$ and $\Box_{+} [\alg{A} \setminus \filter{V}] \subseteq \alg{A} \setminus \filter{W}$.

  To construct $\filter{W}$, we need to introduce some basic constructions on filters. Given a pair of filters $\filter{F}$ and $\filter{G}$, let $\filter{F} \vee \filter{G}$ be the filter generated by $\filter{F} \cup \filter{G}$. Given a filter $\filter{F}$ on $\alg{A}$, let $\Box_{+}^{-1} [\filter{F}]$ be the filter $\set{a \in \alg{A}}{\Box_{+} a \in \filter{F}}$. Finally, given an ideal $\ideal{I}$ and a filter $\filter{F}$ on $\alg{A}$, let $\filter{F} \minus \ideal{I}$ be the filter $\set{a \in \alg{A}}{f \leq a \vee i \textrm{ for some $f \in \filter{F}$, $i \in \ideal{I}$}}$. We say that a filter $\filter{F}$ \emph{knows that it is below} $\filter{U}$ if $\filter{F} = \filter{F} \minus (\alg{A} \setminus \filter{U})$, i.e.\ if $f \leq a \vee i$ for $f \in \filter{F}$ and $i \notin \filter{U}$ implies $a \in \filter{F}$. In particular $\filter{F} \subseteq \filter{U}$ whenever a proper filter $\filter{F}$ knows that it is below $\filter{U}$.

  Now let $\filter{W} = \Diamond_{+} [\filter{V}] \minus (\alg{A} \setminus \filter{U})$. Since $\Diamond_{+} [\filter{V}] \subseteq \filter{U}$, the filter $\filter{W}$ is proper. Furthermore, the filter $\filter{W}$ knows that it is below $\filter{U}$ since $\Diamond_{+} [\filter{V}] \minus (\alg{A} \setminus \filter{U}) = (\Diamond_{+} [\filter{V}] \minus (\alg{A} \setminus \filter{U})) \minus (\alg{A} \setminus \filter{U})$. Let $\filter{V}^\prime = \filter{V} \vee \Box_{+}^{-1} [\filter{W}]$, i.e.\ $v^\prime \in \filter{V}^\prime$ if and only if $v^\prime = v_1 \wedge a$ for some $v_1 \in \filter{V}$ and some $a \in \alg{A}$ such that $\Diamond_{+} v_2 \leq \Box_{+} a \vee i$ for some $v_2 \in \filter{V}$ and $i \notin \filter{U}$. Let $v = v_1 \wedge v_2$. Then $v^{\prime} \geq v \wedge a$ and $\Diamond_{+} v \leq \Box_{+} a \vee i$, hence $\Diamond_{+} v \leq \Diamond_{+} (v \wedge a) \vee i$. It follows that $\Diamond_{+} (v \wedge a) \in \filter{U}^\prime$. In other words, $\Diamond_{+} [\filter{V}^\prime] \subseteq \filter{W} \subseteq \filter{U}$. By the maximality of $\filter{V}$ it follows that in fact $\filter{V}^\prime = \filter{V}$.

  We therefore have a filter $\filter{W}$ which knows it is below $\filter{U}$ such that $\Box_{+}^{-1} [\filter{W}] \subseteq \filter{V}$. We extend $\filter{W}$ to a filter $\filter{U}^\prime$ which is maximal among filters which know that they are below $\filter{U}$. Clearly $\filter{U}^\prime R^{\Box}_{+} \filter{V}^\prime$ and $\filter{U}^\prime \subseteq \filter{U}$. Moreover, $\filter{U}^\prime R^{\Diamond}_{+} \filter{V}^\prime$, since $\filter{V}' = \filter{V}$. It therefore suffices to show that the filter $\filter{U}^\prime$ is prime.

  If $a \notin \filter{U}^\prime$, then there are some $c \notin \filter{V}^\prime$ and $i \notin \filter{U}$ such that $a \leq \Box_{+} c \vee i$. Likewise, if $b \notin \filter{U}^\prime$, then there are some $d \notin \filter{V}^\prime$ and $j \notin \filter{U}$ such that $b \leq \Box_{+} d \vee j$. Then $a \vee b \leq \Box_{+} c \vee \Box_{+} d \vee i \vee j \leq \Box_{+} (c \vee d) \vee (i \vee j)$. But since $\filter{U}$ and $\filter{V}^\prime$ are prime, $c \vee d \notin \filter{V}^\prime$ and $i \vee j \notin \filter{U}$, hence $a \vee b \notin \filter{U}^\prime$.
\end{proof}

\begin{corollary} \label{cor:canonical-frame-is-unimodal}
  The canonical frame of each $\lang{L}$-unimodal algebra is an $\lang{L}$-unimodal frame.
\end{corollary}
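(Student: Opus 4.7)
The plan is to derive the corollary as an immediate synthesis of three previously established facts. Given an $\lang{L}$-unimodal algebra $\alg{A}$, Theorem \ref{thm:canonicity} tells us that its canonical extension $\complex{(\canframe{\alg{A}})}$ is again an $\lang{L}$-unimodal algebra, so it satisfies every quasiequation $(\Delta_{\alpha}, \nabla_{\!\beta})$ with $\Delta_{\alpha}, \nabla_{\!\beta} \in \lang{L}$. The correspondence theorem (Theorem \ref{thm:correspondence}) then transfers each such quasiequation holding in the complex algebra into the corresponding frame condition $(\Delta_{\alpha}, \nabla_{\!\beta})$ on $\canframe{\alg{A}}$. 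At the same time, Proposition \ref{prop:canonical-minimally-generated} guarantees that the canonical frame of any $\lang{L}$-modal algebra is componentwise minimally generated, hence in particular minimally generated.

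Combining these observations, $\canframe{\alg{A}}$ is a minimally generated $\lang{L}$-modal frame which satisfies $(\Delta_{\alpha}, \nabla_{\!\beta})$ for every $\Delta_{\alpha}, \nabla_{\!\beta} \in \lang{L}$. Proposition \ref{prop:unimodal-minimally-generated} then immediately yields that $\canframe{\alg{A}}$ is an $\lang{L}$-unimodal frame, i.e.\ its accessibility relations are generated by a single underlying binary relation. This is the content of the corollary.

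There is no genuine obstacle to overcome: the substantive work has already been absorbed into the previous results. The delicate filter-theoretic constructions for the same-polarity case are hidden inside the proof of Theorem \ref{thm:canonicity} (notably the auxiliary filter $\Diamond_{+} [\filter{V}] \minus (\alg{A} \setminus \filter{U})$ and its maximal prime extension), while the nontrivial reduction from the pairwise frame conditions to the existence of a single generating relation was handled in Proposition \ref{prop:unimodal-minimally-generated} by means of the minimal-pair argument. The role of the corollary is simply to package those ingredients as a frame-theoretic statement, and the implicit appeals to order and forward--backward duality already noted in the preceding proofs cover the remaining combinations of $\Delta$ and $\alpha$.
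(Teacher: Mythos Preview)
Your argument is correct and matches the paper's intended derivation: the corollary is stated without proof immediately after Theorem~\ref{thm:canonicity}, and the ingredients you name (canonicity, correspondence, Propositions~\ref{prop:unimodal-minimally-generated} and~\ref{prop:canonical-minimally-generated}) are exactly the ones in play. The only cosmetic difference is that the paper's proof of Theorem~\ref{thm:canonicity} already works directly at the frame level, establishing $R^{\Delta}_{\alpha} = \Delta_{\alpha}[R^{\Delta}_{\alpha} \cap R^{\nabla}_{\beta}]$ on $\canframe{\alg{A}}$ rather than first proving canonicity algebraically and then invoking correspondence; so strictly speaking the corollary follows from the \emph{proof} of Theorem~\ref{thm:canonicity} plus Propositions~\ref{prop:unimodal-minimally-generated} and~\ref{prop:canonical-minimally-generated}, without the detour through Theorem~\ref{thm:correspondence}.
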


  As an immediate corollary we obtain a Kripke completeness theorem, i.e.\ each quasiequation (in fact, each universal sentence) which fails in some $\lang{L}$-unimodal algebra must fail in some $\lang{L}$-unimodal frame.

\begin{theorem}[Kripke completeness] \label{thm:completeness}
  The quasivariety of (Heyting, bi-Heyting) $\lang{L}$-unimodal algebras is generated as a universal class by the class of all complex algebras of $\lang{L}$-unimodal frames.
\end{theorem}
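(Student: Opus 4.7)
The plan is to derive the theorem in just a few lines from Theorem~\ref{thm:canonical-embedding}, Theorem~\ref{thm:correspondence}, and Corollary~\ref{cor:canonical-frame-is-unimodal}. Two inclusions need to be established: that every complex algebra of an $\lang{L}$-unimodal frame is itself an $\lang{L}$-unimodal algebra, and conversely that every $\lang{L}$-unimodal algebra embeds into such a complex algebra.

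For the first inclusion, fix an $\lang{L}$-unimodal frame $\modframe{F}$ with underlying binary relation $R$. Because $\leq$ is reflexive we have $R \subseteq \Delta_{\alpha}[R] = R^{\Delta}_{\alpha}$ for each $\Delta_{\alpha} \in \lang{L}$, hence $R \subseteq R^{\Delta}_{\alpha} \cap R^{\nabla}_{\beta}$ for every pair, and therefore $R^{\Delta}_{\alpha} = \Delta_{\alpha}[R] \subseteq \Delta_{\alpha}[R^{\Delta}_{\alpha} \cap R^{\nabla}_{\beta}] \subseteq R^{\Delta}_{\alpha}$. For opposite-polarity pairs this equality is literally the frame condition $(\Delta_{\alpha},\nabla_{\!\beta})$, as noted after Theorem~\ref{thm:correspondence}; for the problematic same-polarity pairs, witnesses $u',v'$ for the frame condition are supplied directly by the composition witnessing $(u,v) \in \Delta_{\alpha}[R]$. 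Thus every frame condition of Figure~\ref{fig:unimodal-frame-conditions} holds in $\modframe{F}$, and Theorem~\ref{thm:correspondence} makes $\complex{\modframe{F}}$ an $\lang{L}$-unimodal algebra.

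For the converse inclusion, let $\alg{A}$ be an arbitrary $\lang{L}$-unimodal algebra. Theorem~\ref{thm:canonical-embedding} yields an embedding $\unit_{\alg{A}}\colon \alg{A} \rightarrow \complex{(\canframe{\alg{A}})}$ of $\lang{L}$-modal algebras, and Corollary~\ref{cor:canonical-frame-is-unimodal} ensures that $\canframe{\alg{A}}$ is itself an $\lang{L}$-unimodal frame, so the codomain lies in the generating class. Since universal sentences are reflected by embeddings, any universal sentence which holds in every complex algebra of an $\lang{L}$-unimodal frame must hold in $\alg{A}$, which is exactly the claim in the lattice signature.

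For the Heyting and bi-Heyting variants the same argument works, provided one knows that $\unit_{\alg{A}}$ preserves Heyting implication and co-implication whenever these are present, and that $\complex{(\canframe{\alg{A}})}$ is itself (bi-)Heyting. The latter is immediate because the algebra of upsets of any poset is a complete bi-Heyting algebra. The former is the only step not already internal to the paper: it is the standard fact that the canonical extension of a distributive lattice preserves the residuation adjunctions defining $\rightarrow$ and $\minus$. This is the one place where the proof relies on external input rather than on the machinery of the preceding sections, and is thus the only genuine obstacle to writing the argument up in three lines.
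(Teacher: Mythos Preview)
Your argument is correct and is precisely the unpacking of what the paper treats as an ``immediate corollary'' of Corollary~\ref{cor:canonical-frame-is-unimodal} together with Theorem~\ref{thm:canonical-embedding}; the paper does not spell out the two inclusions or the Heyting/bi-Heyting preservation step, but your treatment of these matches the intended reasoning (the soundness direction is essentially the first paragraph of the proof of Proposition~\ref{prop:unimodal-minimally-generated}).
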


  Let us also consider the definability of the class of $\lang{L}$-unimodal frames by equational or quasiequational axioms. We~say that a class $\class{K}$ of $\lang{L}$-modal frames is \emph{definable} by a set of sentences if for each $\lang{L}$-modal frame we have $\modframe{F} \in \class{K}$ if and only if the complex algebra $\modframe{F}^{+}$ satisfies all of these sentences. Let $\lang{L}_{\class{DLat}}$ and $\lang{L}_{\class{BiHA}}$ be the signatures of distributive lattices and bi-Heyting algebras, respectively.

\begin{theorem}[Definability] \label{thm:definability}
  Let $\class{Fr}_{\lang{L}}$ be the class of all $\lang{L}$-unimodal frames. The following are equivalent:
\begin{itemize}
\item[(i)] $\class{Fr}_{\lang{L}}$ is definable by quasiequations in $\lang{L}_{\class{DLat}} \cup \lang{L}$.
\item[(ii)] $\class{Fr}_{\lang{L}}$ is definable by universal sentences in $\lang{L}_{\class{DLat}} \cup \lang{L}$.
\item[(iii)] $\class{Fr}_{\lang{L}}$ is definable by equations in $\lang{L}_{\class{BiHA}} \cup \lang{L}$.
\item[(iv)] $\class{Fr}_{\lang{L}}$ is definable by universal sentences in $\lang{L}_{\class{BiHA}} \cup \lang{L}$.
\item[(v)]$\lang{L}$ contains at most two modal operators and $\lang{L} \neq \{ \Box_{+}, \Diamond_{+} \}$ and $\lang{L} \neq \{ \Box_{-}, \Diamond_{-} \}$.
\end{itemize}
\end{theorem}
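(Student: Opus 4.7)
The equivalences break into trivial implications and two substantive directions. First I would dispose of (i)$\Rightarrow$(ii), (iii)$\Rightarrow$(iv), (i)$\Rightarrow$(iii), (ii)$\Rightarrow$(iv): the first two hold because equations are quasiequations are universal sentences, while the latter two hold because any sentence in $\lang{L}_{\class{DLat}} \cup \lang{L}$ is \emph{a fortiori} a sentence in the richer signature, and moreover any quasiequation in the bi-Heyting signature is expressible as a single equation via $a \leq b \iff a \rightarrow b = \top$. This reduces the theorem to proving (v)$\Rightarrow$(iii) and (iv)$\Rightarrow$(v).

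For (v)$\Rightarrow$(iii), I would first note that (v) is equivalent to saying that $\lang{L}$ contains no ``problematic'' pair $\{\Box_\alpha, \Diamond_\alpha\}$, since any three or more modalities drawn from $\{\Box_+, \Box_-, \Diamond_+, \Diamond_-\}$ necessarily include such a pair. If $|\lang{L}| \leq 1$ every $\lang{L}$-modal frame is trivially $\lang{L}$-unimodal and the plain $\lang{L}$-modal algebra equations suffice. If $\lang{L}$ is a non-problematic pair $\{\Delta_\alpha, \nabla_\beta\}$, I would take the two quasiequations $(\Delta_\alpha, \nabla_\beta)$ and $(\nabla_\beta, \Delta_\alpha)$ from Figure~\ref{fig:unimodal-axioms} and rewrite them as bi-Heyting equations as in Figure~\ref{fig:equational-unimodal-axioms}. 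By Theorem~\ref{thm:correspondence} these hold in $\complex{\modframe{F}}$ iff the corresponding frame conditions hold in $\modframe{F}$; and since both pairs are non-problematic, each such frame condition is exactly of the form $R^{\Delta}_{\alpha} = \Delta_\alpha[R^\Delta_\alpha \cap R^\nabla_\beta]$, so the conjunction asserts that the two accessibility relations are generated by the single relation $R^\Delta_\alpha \cap R^\nabla_\beta$, i.e.\ that $\modframe{F} \in \class{Fr}_\lang{L}$.

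For (iv)$\Rightarrow$(v) I would argue contrapositively. Assume $\lang{L}$ contains a problematic pair. Example~\ref{ex:non-correspondence-arbitrary-language} furnishes an $\lang{L}$-modal frame $\modframe{F}$ which satisfies every frame condition in Figure~\ref{fig:unimodal-frame-conditions} applicable to $\lang{L}$ yet fails to be $\lang{L}$-unimodal, because the problematic pair of relations is not generated by any single underlying relation. By Theorem~\ref{thm:correspondence} the complex algebra $\complex{\modframe{F}}$ then satisfies every quasiequation $(\Delta_\alpha, \nabla_\beta)$ for $\Delta_\alpha, \nabla_\beta \in \lang{L}$; since upsets of any poset form a bi-Heyting algebra, $\complex{\modframe{F}}$ is a bi-Heyting $\lang{L}$-unimodal algebra. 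Now if some set $\Sigma$ of universal sentences in $\lang{L}_{\class{BiHA}} \cup \lang{L}$ defined $\class{Fr}_\lang{L}$, then $\Sigma$ would hold in the complex algebra of every $\lang{L}$-unimodal frame, hence by the bi-Heyting case of Theorem~\ref{thm:completeness} in every bi-Heyting $\lang{L}$-unimodal algebra, and in particular in $\complex{\modframe{F}}$; the defining property of $\Sigma$ would then force $\modframe{F} \in \class{Fr}_\lang{L}$, contradicting non-unimodality.

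The real load-bearing step is (iv)$\Rightarrow$(v): it is where the gap between frame-level and algebra-level expressibility has to be bridged. The construction of the witness frame is already done in Example~\ref{ex:non-correspondence-arbitrary-language}, so the delicate point is to see that $\complex{\modframe{F}}$ lies inside the variety of \emph{bi-Heyting} $\lang{L}$-unimodal algebras (not merely the quasivariety) and then to push the hypothetical defining sentences into $\complex{\modframe{F}}$ via Kripke completeness. By contrast, (v)$\Rightarrow$(iii) is essentially bookkeeping: it amounts to reading off Figure~\ref{fig:equational-unimodal-axioms} for the only case where a non-trivial pair occurs.
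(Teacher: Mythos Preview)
Your overall strategy matches the paper's: use the Correspondence Theorem for the positive direction and Example~\ref{ex:non-correspondence-arbitrary-language} together with Kripke completeness for the negative direction. The substantive arguments you give for (v)$\Rightarrow$(iii) and (iv)$\Rightarrow$(v) are essentially the paper's.

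There is, however, a structural gap in how you organize the ``trivial'' implications. With (i)$\Rightarrow$(ii), (iii)$\Rightarrow$(iv), (i)$\Rightarrow$(iii), (ii)$\Rightarrow$(iv) in hand, proving (v)$\Rightarrow$(iii) and (iv)$\Rightarrow$(v) gives you the cycle (iii)$\Leftrightarrow$(iv)$\Leftrightarrow$(v) and shows that (i) and (ii) imply this cycle, but nothing you have listed gets you \emph{back} to (i) or (ii). You need (v)$\Rightarrow$(i). In fact your own argument for (v)$\Rightarrow$(iii) already proves this: you start from the quasiequations $(\Delta_\alpha,\nabla_\beta)$ of Figure~\ref{fig:unimodal-axioms}, which live in $\lang{L}_{\class{DLat}}\cup\lang{L}$, and only afterwards rewrite them in the bi-Heyting signature. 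So the fix is just to record that this step establishes (v)$\Rightarrow$(i) directly, exactly as the paper does.

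Relatedly, your justification for (i)$\Rightarrow$(iii)---``any quasiequation in the bi-Heyting signature is expressible as a single equation via $a\leq b\iff a\rightarrow b=\top$''---is not sound as stated. The passage from the quasiequation $\bigwedge_i e_i=\top\Rightarrow e=\top$ to the equation $\bigwedge_i e_i\leq e$ is valid only in one direction; quasivarieties of (bi-)Heyting algebras are not in general varieties. Fortunately this implication is not needed once (v)$\Rightarrow$(i) is established directly, so you can simply drop it.
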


\begin{proof}
  The implications (v) $\Rightarrow$ (i) and (v) $\Rightarrow$ (iii) follow from Theorem \ref{thm:correspondence}. The implications (iii) $\Rightarrow$ (iv) and (i) $\Rightarrow$ (ii) $\Rightarrow$ (iv) are trivial. It remains to prove the implication (iv) $\Rightarrow$ (v). If $\class{Fr}_{\lang{L}}$ is definable by some set of universal sentences in $\lang{L}_{\class{BiHA}} \cup \lang{L}$, then by Theorem \ref{thm:completeness} these universal sentences axiomatize the class of all $\lang{L}$-unimodal algebras. They are therefore equivalent to the quasiequations which we used to define the class of all $\lang{L}$-unimodal algebras. But Example \ref{ex:non-correspondence-arbitrary-language} shows that if $\lang{L}$ contains a problematic pair of modalities, i.e.\ if it does not satisfy the assumption of (v), then the class of $\lang{L}$-unimodal frames is not defined by these quasiequations.
\end{proof}

  We therefore cannot in general view the unirelational semantics (given in terms of $R$) as a special case of the multirelational semantics (given in terms of the relations $R^{\Delta}_{\alpha}$) defined by restricting to a class of frames which satisfies some additional universal axioms. On the other hand, by Propositions \ref{prop:unimodal-minimally-generated} and \ref{prop:canonical-minimally-generated} it is true that relative to the class of minimally generated $\lang{L}$-modal frames (in particular, relative to the class of canonical $\lang{L}$-modal frames) unimodality may be expressed by an equation in $\lang{L}_{\class{BiHA}} \cup \lang{L}$ or by a quasiequation in $\lang{L}_{\class{DLat}} \cup \lang{L}$ for each unimodal signature~$\lang{L}$.

\section{Modal locality conditions}
\label{sec:locality}

  In the previous section, we axiomatized the universal class generated by the complex algebras of unimodal frames. In the current section, we show how to axiomatize, in a modular way, the universal classes generated by the complex algebras of unimodal frames satisfying certain natural compability conditions connecting the accessibility relation~$R$ and the partial order~$\leq$. This will in particular subsume intuitionistic modal logic and positive modal logic as special cases.

  Throughout this section we assume that $\modframe{F}$ is an \emph{$\lang{L}$-unimodal} frame over the poset $\pair{W}{\leq}$ generated by an \emph{$\lang{L}$-convex} relation $R$. We shall be interested in the following four \emph{forward locality conditions}:
\begin{align}
  {\geq} \circ {R} \subseteq {R} \circ {\geq}, \tag*{$(\Diamond_{+})^{*}$}\\
  {\geq} \circ {R} \subseteq {R} \circ {\leq}, \tag*{$(\Diamond_{-})^{*}$}\\
  {\leq} \circ {R} \subseteq {R} \circ {\leq}, \tag*{$(\Box_{+})^{*}$}\\
  {\leq} \circ {R} \subseteq {R} \circ {\geq}, \tag*{$(\Box_{-})^{*}$}
\end{align}
as well as the following four \emph{backward locality conditions}:
\begin{align}
  {R} \circ {\leq} \subseteq {\leq} \circ {R}, \tag*{$(\AdjDiamond_{+})^{*}$}\\
  {R} \circ {\leq} \subseteq {\geq} \circ {R}, \tag*{$(\AdjDiamond_{-})^{*}$}\\
  {R} \circ {\geq} \subseteq {\geq} \circ {R}, \tag*{$(\AdjBox_{+})^{*}$}\\
  {R} \circ {\geq} \subseteq {\leq} \circ {R}. \tag*{$(\AdjBox_{-})^{*}$}
\end{align}
The backward conditions are obtained from the forward ones by applying forward--backward duality, i.e.\ by replacing $R$ by its converse. The conjunction of $(\Diamond_{+})^{*}$ and $(\Box_{+})^{*}$ defines the semantics of positive modal logic, while the conjunction of $(\Diamond_{+})^{*}$ and $(\AdjDiamond_{+})^{*}$ defines the semantics of intuitionistic modal logic.

  We first verify that these conditions deserve to be called locality conditions: they hold if and only if the appropriate modal operator is evaluated locally with respect to the partial order. For example, recall that $\Diamond_{+} a$ holds at a world $u$ (i.e.\ $u \in \Diamond_{+} a$) if and only if there are worlds $v$ and $w$ such that $u \geq v$ and $v R w$ and $a$ holds at $w$ (i.e.\ $w \in a$). By \emph{local evaluation} we mean cutting out the middleman and replacing the above condition by a simpler one: $\Diamond_{+} a$ holds at $u$ if and only if there is a world $w$ where $a$ holds such that $u R w$.

  We stick to the conventions used in the previous section: $v \in^{\alpha} a$ stands for $v \in a$ if $\alpha = +$ and for $v \notin a$ if $\alpha = -$, and $\leq^{\alpha}$ ($\geq^{\alpha}$) stands for $\leq$ ($\geq$) if $\alpha = +$ and for $\geq$ ($\leq$) if $\alpha = -$.

\begin{proposition} \label{prop:local-modalities}
  The frame $\modframe{F}$ satisfies $(\Diamond_{\alpha})^{*}$ if and only if for each $a \in \modframe{F}^{+}$
\begin{align*}
  \Diamond_{\alpha} a = \set{u \in W}{u R v \text{ for some }{ v \in^{\alpha} a}}
\end{align*}
  It satisfies $(\Box_{\alpha})^{*}$ if and only if for each $a \in \modframe{F}^{+}$
\begin{align*}
  \Box_{\alpha} a = \set{u \in W}{u R v \text{ implies } v \in^{\alpha} a}.
\end{align*}
\end{proposition}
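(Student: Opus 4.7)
The plan is to prove the two biconditionals separately, handling the polarities $\alpha \in \{+, -\}$ uniformly via the $\in^{\alpha}$ and $\leq^{\alpha}$ notation from the previous section. The key structural fact is that in the $\lang{L}$-unimodal frame $\modframe{F}$ with generating relation $R$, each accessibility relation factors explicitly: $R^{\Diamond}_{\alpha} = {\geq} \circ R \circ {\geq^{\alpha}}$ and $R^{\Box}_{\alpha} = {\leq} \circ R \circ {\leq^{\alpha}}$. Since $R \subseteq R^{\Diamond}_{\alpha}$ and $R \subseteq R^{\Box}_{\alpha}$, one inclusion in each biconditional (the ``$\supseteq$'' direction for $\Diamond_{\alpha}$ and the ``$\subseteq$'' direction for $\Box_{\alpha}$) is automatic. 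I will also use the routine polarized-upset fact: for any upset $a$ of $\leq$, the conditions $w \in^{\alpha} a$ and $w' \geq^{\alpha} w$ jointly force $w' \in^{\alpha} a$.

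For the diamond biconditional, assume $(\Diamond_{\alpha})^{*}$ and take $u \in \Diamond_{\alpha} a$; unfolding yields $u', v', v$ with $u \geq u' R v' \geq^{\alpha} v$ and $v \in^{\alpha} a$. Applying ${\geq} \circ R \subseteq R \circ {\geq^{\alpha}}$ to the prefix $u \geq u' R v'$ gives some $v''$ with $u R v''$ and $v'' \geq^{\alpha} v'$; by transitivity $v'' \geq^{\alpha} v$, and the upset observation then yields $v'' \in^{\alpha} a$, placing $u$ in the local set. For the converse, given $u \geq u' R v$, I choose the upset $a$ for which $w \in^{\alpha} a$ holds exactly when $w \geq^{\alpha} v$, concretely $a = {\uparrow} v$ for $\alpha = +$ and $a = W \setminus {\downarrow} v$ for $\alpha = -$. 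Then $v \in^{\alpha} a$ and the composition $u \geq u' R v \geq^{\alpha} v$ witness $u \in \Diamond_{\alpha} a$; by hypothesis $u$ lies in the local set, yielding a $v'$ with $u R v'$ and $v' \in^{\alpha} a$, i.e.\ $v' \geq^{\alpha} v$.

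The box biconditional proceeds by the same template with the orders flipped. Assuming $(\Box_{\alpha})^{*}$, if $u$ lies in the local box set and $u R^{\Box}_{\alpha} v$, unfolding gives $u \leq u' R v' \leq^{\alpha} v$; the locality condition ${\leq} \circ R \subseteq R \circ {\leq^{\alpha}}$ produces $v''$ with $u R v''$ and $v'' \leq^{\alpha} v'$, so $v'' \leq^{\alpha} v$, and then $v'' \in^{\alpha} a$ (from the local formula) forces $v \in^{\alpha} a$ via the upset observation applied to $v \geq^{\alpha} v''$. For the converse, given $u \leq u' R v$, I pick the upset $a$ for which $w \notin^{\alpha} a$ holds exactly when $w \leq^{\alpha} v$, namely $a = W \setminus {\downarrow} v$ for $\alpha = +$ and $a = {\uparrow} v$ for $\alpha = -$. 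Then $v \notin^{\alpha} a$ and $u R^{\Box}_{\alpha} v$ via $u \leq u' R v \leq^{\alpha} v$ together give $u \notin \Box_{\alpha} a$, so the contrapositive of the local formula supplies a $v'$ with $u R v'$ and $v' \notin^{\alpha} a$, i.e.\ $v' \leq^{\alpha} v$.

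No genuine obstacle arises: the proof reduces to unfolding the unimodal factorization of $R^{\Delta}_{\alpha}$ and applying the locality hypothesis once, together with the upset property of $a$. The only point requiring attention is the polarity bookkeeping in the converse directions, where the four witness upsets split cleanly between principal upsets and complements of principal downsets according to $\alpha$.
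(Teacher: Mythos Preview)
Your proof is correct and follows essentially the same approach as the paper. The paper dispatches the $\Box_{\alpha}$ case by order duality rather than writing it out, and proves the direction from the local formula to $(\Diamond_{\alpha})^{*}$ by contrapositive rather than directly, but the witness upsets are the same principal upsets and complements of principal downsets that you construct.
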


\begin{proof}
  By order reversal it suffices to prove the claim for $\Diamond_{\alpha}$. The left-to-right direction is easy, we only prove the right-to-left direction. Suppose therefore that $u \geq u^\prime R v^\prime$ but there is no $v \in W$ such that $u R v$ and $v \geq^{\alpha} v^\prime$. Then let $w \in^{\alpha} a$ if and only if $w \geq^{\alpha} v^\prime$. It follows that $u \in \Diamond_{\alpha} a$ but there is no $v \in W$ such that $u R v$ and $v \in^{\alpha} a$.
\end{proof}

\begin{proposition} \label{prop:backward-local-modalities}
  The frame $\modframe{F}$ satisfies $(\AdjDiamond_{\alpha})^{*}$ if and only if for each $a \in \modframe{F}^{+}$
\begin{align*}
  \AdjDiamond_{\alpha} a = \set{u \in W}{v R u \text{ for some }{ v \in^{\alpha} a}}.
\end{align*}
  It satisfies $(\AdjBox_{\alpha})^{*}$ if and only if for each $a \in \modframe{F}^{+}$
\begin{align*}
  \AdjBox_{\alpha} a = \set{u \in W}{v R u \text{ implies } v \in^{\alpha} a}.
\end{align*}
\end{proposition}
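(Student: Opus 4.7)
The plan is to reduce Proposition \ref{prop:backward-local-modalities} to the already-proved Proposition \ref{prop:local-modalities} by exploiting forward--backward duality, which is the whole point of having set up the converse operation on frames back in Section \ref{sec:preliminaries}. The key observation is that the semantic clause $\AdjDiamond_{\alpha} a = \set{u \in W}{v R u \text{ for some } v \in^{\alpha} a}$ on $\modframe{F}$ is, upon rewriting $v R u$ as $u \converse{R} v$, literally the clause computing $\Diamond_{\alpha} a$ in the converse frame $\converse{\modframe{F}}$, whose accessibility relation is $\converse{R}$. The analogous remark applies to $\AdjBox_{\alpha}$ and $\Box_{\alpha}$. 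Consequently, the local-evaluation identity we wish to prove for $\AdjDiamond_{\alpha}$ ($\AdjBox_{\alpha}$) on $\modframe{F}$ is the same statement as the local-evaluation identity for $\Diamond_{\alpha}$ ($\Box_{\alpha}$) on $\converse{\modframe{F}}$.

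Given this, the first step is to apply Proposition \ref{prop:local-modalities} to $\converse{\modframe{F}}$. This yields that the desired identity holds iff $\converse{\modframe{F}}$ satisfies the forward locality condition $(\Diamond_{\alpha})^{*}$ or $(\Box_{\alpha})^{*}$ in which $R$ is replaced by $\converse{R}$.

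The second step is to translate these conditions on $\converse{R}$ into the conditions on $R$ appearing in the statement. This is a routine manipulation using $\converse{(A \circ B)} = \converse{B} \circ \converse{A}$ and $\converse{{\leq}} = {\geq}$: taking the converse of both sides of each inclusion turns the forward condition on $\converse{R}$ into the corresponding backward condition on $R$. For instance, $(\Diamond_{+})^{*}$ applied to $\converse{R}$ reads ${\geq} \circ \converse{R} \subseteq \converse{R} \circ {\geq}$, whose converse is $R \circ {\leq} \subseteq {\leq} \circ R$, which is exactly $(\AdjDiamond_{+})^{*}$. Running the same calculation through the three remaining pairs $(\Diamond_{-},\AdjDiamond_{-})$, $(\Box_{+},\AdjBox_{+})$, $(\Box_{-},\AdjBox_{-})$ verifies each of the other cases.

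The main obstacle is genuinely only the bookkeeping of converses and polarity flips; no new construction beyond the one used in the proof of Proposition \ref{prop:local-modalities} is required. This is exactly the situation anticipated by the remark in Section \ref{sec:preliminaries} that forward--backward duality allows us to cut work in half, so it is reasonable to leave the four concrete inclusion-converse verifications to the reader after spelling out one of them.
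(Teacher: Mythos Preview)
Your proposal is correct and follows exactly the paper's own approach: the paper's proof is the single sentence ``The claim follows from Proposition~\ref{prop:local-modalities} by forward--backward duality,'' and you have simply unpacked what that duality amounts to in this instance. Your explicit verification that taking converses turns, e.g., ${\geq} \circ \converse{R} \subseteq \converse{R} \circ {\geq}$ into $R \circ {\leq} \subseteq {\leq} \circ R$ is precisely the bookkeeping the paper leaves implicit.
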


\begin{proof}
  The claim follows from Proposition \ref{prop:local-modalities} by forward--backward duality.
\end{proof}

  Each of the frame conditions $(\Delta_{\alpha})^{*}$ may in fact be decomposed as a conjunction of two simpler conditions. (Recall that we are assuming that $R = \bigcap_{\Delta_{\alpha} \in \lang{L}} R^{\Delta}_{\alpha}$.) For the forward conditions we get the following pairs:
\begin{align}
  R^{\Diamond}_{+} & \subseteq (R^{\Diamond}_{+} \cap R^{\Box}_{+}) \circ {\geq}, & R^{\Diamond}_{+} & \subseteq R^{\Box}_{-}, \tag*{$(\Diamond_{+}, \Box_{+})^{*} ~ \& ~ (\Diamond_{+}, \Box_{-})^{*}$} \\
  R^{\Diamond}_{-} & \subseteq (R^{\Diamond}_{-} \cap R^{\Box}_{-}) \circ {\leq}, & R^{\Diamond}_{-} & \subseteq R^{\Box}_{+}, \tag*{$(\Diamond_{-}, \Box_{-})^{*} ~ \& ~ (\Diamond_{-}, \Box_{+})^{*}$} \\
  R^{\Box}_{+} & \subseteq (R^{\Box}_{+} \cap R^{\Diamond}_{+}) \circ {\leq}, & R^{\Box}_{+} & \subseteq R^{\Diamond}_{-}, \tag*{$(\Box_{+}, \Diamond_{+})^{*} ~ \& ~ (\Box_{+}, \Diamond_{-})^{*}$} \\
  R^{\Box}_{-} & \subseteq (R^{\Box}_{-} \cap R^{\Diamond}_{-}) \circ {\geq}, & R^{\Box}_{-} & \subseteq R^{\Diamond}_{+},  \tag*{$(\Box_{-}, \Diamond_{-})^{*} ~ \& ~ (\Box_{-}, \Diamond_{+})^{*}$}
\end{align}
  while for the backward conditions we get the following pairs:
\begin{align}
  R^{\Box}_{+} & \subseteq {\leq} \circ (R^{\Box}_{+} \cap R^{\Diamond}_{+}), & R^{\Box}_{+} & \subseteq R^{\Box}_{-}, \tag*{$(\AdjDiamond_{+}, \AdjBox_{+})^{*} ~ \& ~ (\AdjDiamond_{+}, \AdjBox_{-})^{*}$} \\
  R^{\Diamond}_{-} & \subseteq {\geq} \circ (R^{\Diamond}_{-} \cap R^{\Box}_{-}), & R^{\Diamond}_{-} & \subseteq R^{\Diamond}_{+}, \tag*{$(\AdjDiamond_{-}, \AdjBox_{-})^{*} ~ \& ~ (\AdjDiamond_{-}, \AdjBox_{+})^{*}$} \\
  R^{\Diamond}_{+} & \subseteq {\geq} \circ (R^{\Diamond}_{+} \cap R^{\Box}_{+}), & R^{\Diamond}_{+} & \subseteq R^{\Diamond}_{-}, \tag*{$(\AdjBox_{+}, \AdjDiamond_{+})^{*} ~ \& ~ (\AdjBox_{+}, \AdjDiamond_{-})^{*}$} \\
  R^{\Box}_{-} & \subseteq {\leq} \circ (R^{\Box}_{-} \cap R^{\Diamond}_{-}), & R^{\Box}_{-} & \subseteq R^{\Box}_{+}. \tag*{$(\AdjBox_{-}, \AdjDiamond_{-})^{*} ~ \& ~ (\AdjBox_{-}, \AdjDiamond_{+})^{*}$}
\end{align}
  The next proposition states this claim more precisely. Each of its four equivalences may be derived from any of the others by applying order duality or forward--backward duality or both.

\begin{proposition} \label{prop:analysis}
  Let $\modframe{F}$ be an $\lang{L}$-unimodal frame. If $\Diamond_{\alpha} \in \lang{L}$, then $\modframe{F}$ satisfies $(\Diamond_{\alpha})^{*}$ if and only if
\begin{align*}
  \modframe{F} \text{ satisfies } (\Diamond_{\alpha}, \Box_{\alpha})^{*} \text{ in case } \Box_{\alpha} \in \alg{L} \text{ and } \modframe{F} \text{ satisfies } (\Diamond_{\alpha}, \Box_{-\alpha})^{*} \text{ in case } \Box_{-\alpha} \in \lang{L}.
\end{align*}
  If $\Box_{\alpha} \in \lang{L}$, then $\modframe{F}$ satisfies $(\Box_{\alpha})^{*}$ if and only if
\begin{align*}
  \modframe{F} \text{ satisfies } (\Box_{\alpha}, \Diamond_{\alpha})^{*} \text{ in case } \Diamond_{\alpha} \in \lang{L} \text{ and } \modframe{F} \text{ satisfies } (\Box_{\alpha}, \Diamond_{-\alpha})^{*} \text{ in case } \Diamond_{-\alpha} \in \lang{L}.
\end{align*}
  Let $\converse{\lang{L}}$ be the set of modalities adjoint to those in $\lang{L}$. If $\AdjDiamond_{\alpha} \in \converse{\lang{L}}$, then $\modframe{F}$ satisfies $(\AdjDiamond_{\alpha})^{*}$ if and only if
\begin{align*}
  \modframe{F} \text{ satisfies } (\AdjDiamond_{\alpha}, \AdjBox_{\alpha})^{*} \text{ in case } \AdjBox_{\alpha} \in \lang{L} \text{ and } \modframe{F} \text{ satisfies } (\AdjDiamond_{\alpha}, \AdjBox_{-\alpha})^{*} \text{ in case } \AdjBox_{-\alpha} \in \lang{L}.
\end{align*}
  If $\AdjBox_{\alpha} \in \converse{\lang{L}}$, then $\modframe{F}$ satisfies $(\AdjBox_{\alpha})^{*}$ if and only if
\begin{align*}
  \modframe{F} \text{ satisfies } (\AdjBox_{\alpha}, \AdjDiamond_{\alpha})^{*} \text{ in case } \AdjDiamond_{\alpha} \in \lang{L} \text{ and } \modframe{F} \text{ satisfies } (\AdjBox_{\alpha}, \AdjDiamond_{-\alpha})^{*} \text{ in case } \AdjDiamond_{-\alpha} \in \lang{L}.
\end{align*}
\end{proposition}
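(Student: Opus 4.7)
My plan is to reduce to a single equivalence using the symmetries of Proposition~\ref{prop:symmetries}, and then argue by cases on the box modalities present in~$\lang{L}$. By forward--backward duality (passing from $R$ to its converse $\converse{R}$), each backward locality condition becomes the corresponding forward one, and similarly for the backward pair conditions; so the two backward equivalences will follow from the two forward ones. By order duality the statement for $\Box_{\alpha}$ follows from that for $\Diamond_{-\alpha}$, and the case $\alpha = -$ reduces to $\alpha = +$. It therefore suffices to prove the first equivalence for $\Diamond_{+} \in \lang{L}$.

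For the left-to-right direction I assume $(\Diamond_{+})^{*}$. Given $u R^{\Diamond}_{+} v$, unfolding the definition of $R^{\Diamond}_{+}$ gives $u \geq u_{0} R v_{0} \geq v$; applying $(\Diamond_{+})^{*}$ to $u \geq u_{0} R v_{0}$ produces some $v' \geq v_{0} \geq v$ with $u R v'$. Since $R \subseteq R^{\Delta}_{\beta}$ for every $\Delta_{\beta} \in \lang{L}$, this witness immediately verifies $(\Diamond_{+}, \Box_{+})^{*}$ whenever $\Box_{+} \in \lang{L}$. For $(\Diamond_{+}, \Box_{-})^{*}$ when $\Box_{-} \in \lang{L}$, the same $v'$ yields $u R^{\Box}_{-} v'$, and I then use that $R^{\Box}_{-}$ is a downset of $\pair{U}{\leq}^{2}$ to bring this relation down along $v \leq v'$ to $u R^{\Box}_{-} v$.

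For the right-to-left direction I assume the applicable pair conditions hold and let $u \geq u_{0} R v$; the goal is to produce $v' \geq v$ with $u R v'$. The key preliminary observation is that from $u_{0} R v$ and $u \geq u_{0}$, the $\Diamond_{\beta}$-monotonicity of each $R^{\Diamond}_{\beta}$ already yields $u R^{\Diamond}_{\beta} v$ for every $\Diamond_{\beta} \in \lang{L}$, in particular $u R^{\Diamond}_{+} v$. I then split on the box modalities in~$\lang{L}$. If $\Box_{+} \in \lang{L}$, I apply $(\Diamond_{+}, \Box_{+})^{*}$ to $u R^{\Diamond}_{+} v$ to obtain $v' \geq v$ with $u R^{\Diamond}_{+} v'$ and $u R^{\Box}_{+} v'$; if additionally $\Box_{-} \in \lang{L}$, the inclusion $R^{\Diamond}_{+} \subseteq R^{\Box}_{-}$ gives $u R^{\Box}_{-} v'$; and if $\Diamond_{-} \in \lang{L}$, $\Diamond_{-}$-monotonicity propagates $u R^{\Diamond}_{-} v$ upward along $v' \geq v$ to $u R^{\Diamond}_{-} v'$, so altogether $u R v'$. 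If $\Box_{+} \notin \lang{L}$ but $\Box_{-} \in \lang{L}$, then $R^{\Diamond}_{+} \subseteq R^{\Box}_{-}$ supplies $u R^{\Box}_{-} v$ directly and I take $v' = v$. If no box modality lies in $\lang{L}$ at all, the preliminary observation already gives $u R v$.

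The only delicate step is the right-to-left case analysis: once $(\Diamond_{+}, \Box_{+})^{*}$ supplies a candidate $v'$, one still has to verify $u R^{\Delta}_{\beta} v'$ for \emph{every} $\Delta_{\beta} \in \lang{L}$, not merely for those modalities appearing in an applicable pair condition. This is exactly what makes the $\Diamond$-monotonicity arguments and the inclusion $R^{\Diamond}_{+} \subseteq R^{\Box}_{-}$ indispensable, since together they force all the required witnesses to agree at the same~$v'$.
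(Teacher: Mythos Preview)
The paper omits the proof entirely (``We omit the easy proof of this proposition''), so there is nothing to compare against at the level of argument. Your detailed treatment of the $\Diamond_{+}$ case is correct: the left-to-right direction correctly uses $\lang{L}$-convexity to pass from $u R v'$ to $u (R^{\Diamond}_{+} \cap R^{\Box}_{+}) v'$, and the $\Box_{-}$-monotonicity step is right since $R^{\Box}_{-} = {\leq} \circ R \circ {\geq}$ is indeed closed downward in the second coordinate. The right-to-left case analysis is also sound, and your closing remark correctly identifies the one nontrivial point, namely that the witness $v'$ produced by $(\Diamond_{+}, \Box_{+})^{*}$ must be checked against \emph{all} modalities in $\lang{L}$, which is handled by monotonicity of the $R^{\Diamond}_{\beta}$ in the second coordinate together with the inclusion $R^{\Diamond}_{+} \subseteq R^{\Box}_{-}$.

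There is one slip in your symmetry reductions: order duality exchanges $\Box_{\alpha}$ with $\Diamond_{\alpha}$ (same polarity), not with $\Diamond_{-\alpha}$. You can see this directly from the frame conditions: reversing $\leq$ turns $(\Box_{+})^{*}$, namely ${\leq} \circ R \subseteq R \circ {\leq}$, into ${\geq} \circ R \subseteq R \circ {\geq}$, which is $(\Diamond_{+})^{*}$. Consequently your claim that ``the case $\alpha = -$ reduces to $\alpha = +$'' is not justified by the symmetries in Proposition~\ref{prop:symmetries}. This does not damage the argument, since your proof for $\Diamond_{+}$ goes through verbatim for $\Diamond_{-}$ after replacing $\geq$ by $\leq$ in the second coordinate throughout; but you should either carry out that case explicitly (or with $\alpha$ as a parameter using the $\geq^{\alpha}$ notation) or correct the duality claim to ``order duality gives the $\Box_{\alpha}$ case from the $\Diamond_{\alpha}$ case, and the argument for $\Diamond_{+}$ adapts to $\Diamond_{-}$ with the obvious sign changes.''
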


\begin{proof}
  We omit the easy proof of this proposition.
\end{proof}

  We now show that each of these 16 conditions corresponds to a canonical equation. For the 8 forward conditions these are equations in the signature of modal algebras. However, for the 8 backward conditions we have to add Heyting implication or co-implication in order to capture them algebraically.

\begin{figure}[t]
\caption{Forward locality axioms}
\label{fig:forward-locality-axioms}

\begin{align}
&  \Box_{+} a \wedge \Diamond_{+} b \leq \Diamond_{+} (a \wedge b) \tag*{$(\Diamond_{+}, \Box_{+})^{*}$}\\
&  \Box_{-} a \wedge \Diamond_{-} b \leq \Diamond_{-} (a \vee b) \tag*{$(\Diamond_{-}, \Box_{-})^{*}$}\\
&  \Box_{+} (a \vee b) \leq \Box_{+} a \vee \Diamond_{+} b \tag*{$(\Box_{+}, \Diamond_{+})^{*}$}\\
&  \Box_{-} (a \wedge b) \leq \Box_{-} a \vee \Diamond_{-} b \tag*{$(\Box_{-}, \Diamond_{-})^{*}$}
\end{align}
\begin{align}
&  \Box_{-} a \wedge \Diamond_{+} a \leq \bot \tag*{$(\Diamond_{+}, \Box_{-})^{*}$}\\
&  \Box_{+} a \wedge \Diamond_{-} a \leq \bot \tag*{$(\Diamond_{-}, \Box_{+})^{*}$}\\
&  \top \leq \Box_{+} a \vee \Diamond_{-} a \tag*{$(\Box_{+}, \Diamond_{-})^{*}$}\\
&  \top \leq \Box_{-} a \vee \Diamond_{+} a \tag*{$(\Box_{-}, \Diamond_{+})^{*}$}
\end{align}

\end{figure}

\begin{figure}[t]
\caption{Backward locality axioms}
\label{fig:backward-locality-axioms}

\begin{align*}
& \Diamond_{+} a \rightarrow \Box_{+} b \leq \Box_{+} (a \rightarrow b) \tag*{$(\AdjDiamond_{+}, \AdjBox_{+})^{*}$} \\
&  \Diamond_{-} (a \rightarrow b) \leq \Diamond_{-} b \minus \Box_{-} a \tag*{$(\AdjDiamond_{-}, \AdjBox_{-})^{*}$} \\
& \Diamond_{+} (b \minus a) \leq \Diamond_{+} b \minus \Box_{+} a \tag*{$(\AdjBox_{+}, \AdjDiamond_{+})^{*}$} \\
&  \Diamond_{-} a \rightarrow \Box_{-} b \leq \Box_{-} (b \minus a) \tag*{$(\AdjBox_{-}, \AdjDiamond_{-})^{*}$}
\end{align*}
\begin{align*}
&  \Box_{-} a \leq \Box_{+} (a \rightarrow \bot) \tag*{$(\AdjDiamond_{+}, \AdjBox_{-})^{*}$} \\
&  \Diamond_{-} (a \rightarrow \bot) \leq \Diamond_{+} a \tag*{$(\AdjDiamond_{-}, \AdjBox_{+})^{*}$} \\
&  \Diamond_{+} (\top \minus a) \leq \Diamond_{-} a \tag*{$(\AdjBox_{+}, \AdjDiamond_{-})^{*}$} \\
&  \Box_{+} a \leq \Box_{-} (\top \minus a) \tag*{$(\AdjBox_{-}, \AdjDiamond_{+})^{*}$}
\end{align*}

\end{figure}

  The equations which correspond to the forward locality conditions are shown in Figure \ref{fig:forward-locality-axioms}, and the equations which correspond to the backward locality conditions are shown in Figure \ref{fig:backward-locality-axioms}. The axioms $(\Diamond_{+}, \Box_{+})^{*}$ and $(\Box_{+}, \Diamond_{+})^{*}$ are precisely the axioms of positive modal algebras, while the axioms $(\Diamond_{+}, \Box_{+})^{*}$ and $(\AdjDiamond_{+}, \AdjBox_{+})^{*}$ are the axioms of intuitionistic modal algebras. Notice that the equations $(\Diamond_{-}, \Box_{-})^{*}$ and $(\Box_{-}, \Diamond_{-})^{*}$ are the only axioms among those considered in this paper which use both implication and co-implication.

  We first show that the backward locality axioms are obtained by applying the forward--backward duality to forward locality axioms. For example, the forward locality axiom $\Diamond_{+} a \rightarrow \Box_{+} b \leq \Box_{+} (a \rightarrow b)$ characteristic of intuitionistic modal algebras, is essentially the ordinary forward-looking axiom for backward-looking modalities $\AdjBox_{+} a \wedge \AdjDiamond_{+} b \leq \AdjDiamond_{+} (a \wedge b)$.

  We call an $\lang{L}$-modal algebra \emph{appropriate} for the equation $(\Delta_{\alpha}, \nabla_{\!\beta})^{*}$ if it is a Heyting, co-Heyting, or bi-Heyting algebra according to which connectives appear in the equation $(\Delta_{\alpha}, \nabla_{\!\beta})^{*}$. 

\begin{proposition} \label{prop:backward-locality-conditions}
  Let $\Delta'_{\alpha}$ and $\nabla'_{\!\beta}$ be adjoint to $\Delta_{\alpha}$ and $\nabla_{\!\beta}$, respectively. Let $\alg{A}$ be a tense $\lang{L}$-modal algebra appropriate for $(\Delta'_{\alpha}, \nabla'_{\!\beta})$. Then the adjoint of $\alg{A}$ satisfies $(\Delta_{\alpha}, \nabla_{\!\beta})^{*}$ if and only if $\alg{A}$ satisfies $(\Delta'_{\alpha}, \nabla'_{\!\beta})^{*}$.
\end{proposition}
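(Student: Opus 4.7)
The plan is to unfold the definition of the adjoint algebra $\converse{\alg{A}}$ and then invoke the tense adjointness equivalences. Recall that $\converse{\alg{A}}$ is a bulk relabeling of $\alg{A}$: the main modality $\Box_{\alpha}^{\alg{A}}$ becomes the backward modality $\AdjBox_{\alpha}^{\converse{\alg{A}}}$, the backward modality $\AdjDiamond_{\alpha}^{\alg{A}}$ becomes the main modality $\Diamond_{\alpha}^{\converse{\alg{A}}}$, and analogously for the pair $\Diamond_{\alpha}^{\alg{A}} \leftrightarrow \AdjDiamond_{\alpha}^{\converse{\alg{A}}}$ and $\AdjBox_{\alpha}^{\alg{A}} \leftrightarrow \Box_{\alpha}^{\converse{\alg{A}}}$ (the shape $\Box/\Diamond$ and the polarity $\alpha$ are preserved; only the backward decoration is toggled). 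Consequently ``$\converse{\alg{A}}$ satisfies $(\Delta_{\alpha}, \nabla_{\!\beta})^{*}$'', read back in the signature of $\alg{A}$, translates into a single equation $E$ obtained from $(\Delta_{\alpha}, \nabla_{\!\beta})^{*}$ by toggling the backward decoration on each modal symbol appearing in it.

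It then remains to show that $E$ is equivalent in $\alg{A}$ to $(\Delta'_{\alpha}, \nabla'_{\!\beta})^{*}$. By Proposition~\ref{prop:symmetries} (order duality and forward--backward duality) the sixteen possible pairs $(\Delta_{\alpha}, \nabla_{\!\beta})$ collapse to a small number of representative cases, each settled by a direct algebraic calculation. The main tools are the adjunction laws $\AdjDiamond_{\alpha} x \leq y \iff x \leq \Box_{\alpha} y$ and $\Diamond_{\alpha} x \leq y \iff x \leq \AdjBox_{\alpha} y$ (together with their polarity-$-$ variants), which allow one to transfer a modal operator from one side of an inequality to the other by replacing it with its adjoint. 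Applied to the purely multiplicative same-polarity axioms (such as $(\Diamond_+, \Box_+)^*$), the equivalence between $E$ and $(\Delta'_\alpha, \nabla'_\beta)^*$ follows immediately from one application of the adjunction on each side, and the opposite-polarity axioms of the form $\Box_\alpha a \wedge \Diamond_{-\alpha} a \leq \bot$ and $\top \leq \Box_\alpha a \vee \Diamond_{-\alpha} a$ are handled similarly.

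The main technical subtlety concerns those axioms $(\Delta_{\alpha}, \nabla_{\!\beta})^{*}$ that involve Heyting implication or co-implication. In these cases the adjunction laws must be combined with the Heyting residuation $a \wedge b \leq c \iff b \leq a \rightarrow c$ (and its dual for co-implication). The typical manoeuvre is to recast an inequality containing $\rightarrow$ as a purely multiplicative one of shape $p \wedge q \leq r$, use the adjunction to shift a modal operator across, and then reassemble the implication on the resulting side. Once this is carried out in each representative case, the remaining cases follow by the duality symmetries, completing the proof.
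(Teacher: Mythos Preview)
Your first step is right and matches the paper: reading the forward axiom $(\Delta_{\alpha}, \nabla_{\!\beta})^{*}$ in the adjoint algebra amounts to replacing each white modality by its black counterpart, yielding an inequality $E$ in $\alg{A}$ such as $\AdjBox_{+} a \wedge \AdjDiamond_{+} b \leq \AdjDiamond_{+}(a \wedge b)$.

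The gap is in your second step. You claim that for the ``purely multiplicative same-polarity axioms'' the equivalence of $E$ with $(\Delta'_{\alpha}, \nabla'_{\!\beta})^{*}$ follows from ``one application of the adjunction on each side'', and that only the cases where the \emph{source} axiom contains $\rightarrow$ or $\minus$ require extra work. But look at the tables: \emph{every} forward locality axiom in Figure~\ref{fig:forward-locality-axioms} is purely lattice-theoretic, while \emph{every} backward locality axiom in Figure~\ref{fig:backward-locality-axioms} contains $\rightarrow$ or $\minus$. So in every single case you must pass from a lattice inequality in the black modalities to a Heyting inequality in the white modalities, and this is precisely the content of the proposition---it is never ``immediate''. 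For instance, to go from $E$ above to $(\AdjDiamond_{+}, \AdjBox_{+})^{*}$, namely $\Diamond_{+} c \rightarrow \Box_{+} d \leq \Box_{+}(c \rightarrow d)$, a single adjunction move does not suffice: one has to \emph{instantiate} the universally quantified $a, b$ in $E$ by the specific values $a := \Diamond_{+} c$ and $b := \Diamond_{+} c \rightarrow \Box_{+} d$, and then invoke the unit and counit inequalities $c \leq \AdjBox_{+}\Diamond_{+} c$ and $\AdjDiamond_{+}\Box_{+} d \leq d$ to collapse the resulting expression. The converse direction likewise requires a carefully chosen substitution. The opposite-polarity cases (e.g.\ $\AdjBox_{-} a \wedge \AdjDiamond_{+} a \leq \bot$ versus $\Box_{-} a \leq \Box_{+}(a \rightarrow \bot)$) need the same trick: substitute $a := \Box_{-} x$ and use $x \leq \AdjBox_{-}\Box_{-} x$.

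In short, the adjunction laws you list are the right ingredients, but the recipe you describe (``shift a modal operator across'') omits the essential instantiation step that converts a universal statement about black modalities into one about white modalities. Without spelling out those substitutions and the use of the tense unit/counit inequalities, the argument does not go through.
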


\begin{proof}
  By order reversal it suffices to deal with the first two equations in each group.

  Suppose that the adjoint of $\alg{A}$ satisfies $(\Diamond_{+}, \Box_{+})^{*}$, i.e.\ that $\alg{A}$ satisfies $\AdjBox_{+} a \wedge \AdjDiamond_{+} b \leq \AdjDiamond_{+} (a \wedge b)$ for all $a, b \in \alg{A}$. Letting $a = \Diamond_{+} c$ and $b = \Diamond_{+} c \rightarrow \Box_{+} d$ (in particular $a \wedge b \leq \Box_{+} d$) yields $c \wedge \AdjDiamond_{+} b \leq \AdjBox_{+} \Diamond_{+} c \wedge \AdjDiamond_{+} b \leq \AdjDiamond_{+} (\Diamond_{+} c \wedge b) \leq \AdjDiamond_{+} \Box_{+} d \leq d$, hence $\AdjDiamond_{+} (\Diamond_{+} c \rightarrow \Box_{+} d) \leq c \rightarrow d$ and $\Diamond_{+} c \rightarrow \Box_{+} d \leq \Box_{+} c \rightarrow d$.

  Conversely, suppose that $\Diamond_{+} a \rightarrow \Box_{+} b \leq \Box_{+} (a \rightarrow b)$ for all $a, b \in \alg{A}$. But then $a \wedge \AdjDiamond_{+} (\Diamond_{+} a \rightarrow \Box_{+} b) \leq b$. Now $\AdjBox_{+} c \wedge \AdjDiamond_{+} d \leq \AdjBox_{+} c \wedge \AdjDiamond_{+} (c \rightarrow d) \leq \AdjBox_{+} c \wedge \AdjDiamond_{+} (\Diamond_{+} \AdjBox_{+} c \rightarrow \Box_{+} \AdjDiamond_{+} d) \leq \AdjDiamond_{+} d$, where the last inequality was obtained from $a \wedge \AdjDiamond_{+} (\Diamond_{+} a \rightarrow \Box_{+} b) \leq b$ by substituting $a = \AdjBox_{+} c$ and $b = \AdjDiamond_{+} d$.

  Suppose that the adjoint of $\alg{A}$ satisfies $(\Diamond_{-}, \Box_{-})^{*}$, i.e.\ that $\alg{A}$ satisfies $\AdjBox_{-} a \wedge \AdjDiamond_{-} b \leq \AdjDiamond_{-} (a \vee b)$. Letting $a = \Box_{-} c$ and $b = \Diamond_{-} d \minus \Box_{-} c$ (in particular $a \vee b \geq \Diamond_{-} d$) yields $c \wedge \AdjDiamond_{-} b \leq \AdjBox_{-} \Box_{-} c \wedge \AdjDiamond_{-} b \leq \AdjDiamond_{-} (a \vee b) \leq \AdjDiamond_{-} \Diamond_{-} d \leq d$, hence $\AdjDiamond_{-} b \leq c \rightarrow d$ and $\Diamond_{-} (c \rightarrow d) \leq \Diamond_{-} \AdjDiamond_{-} b \leq b \leq \Diamond_{-} d \minus \Box_{-} c$.

  Conversely, suppose that $\Diamond_{-} (a \rightarrow b) \leq \Diamond_{-} b \minus \Box_{-} a$ for all $a, b \in \alg{A}$. But then $a \wedge \AdjDiamond_{-} (\Diamond_{-} b \minus \Box_{-} a) \leq b$. Observe that the inequality $(c \vee d) \minus c \leq c \vee d$ holds for all $c, d \in \alg{A}$. It follows that $\AdjBox_{-} c \wedge \AdjDiamond_{-} (c \vee d) \leq \AdjBox_{-} c \wedge \AdjDiamond_{-} ((c \vee d) \minus c) \leq \AdjBox_{-} c \wedge \AdjDiamond_{-} (\Diamond_{-} \AdjDiamond_{-} (c \vee d) \minus \Box_{-} \AdjBox_{-} c) \leq \AdjDiamond_{-} (c \vee d)$, where the last inequality was obtained from $a \wedge \AdjDiamond_{-} (\Diamond_{-} b \minus \Box_{-} a) \leq b$ by substituting $a = \AdjBox_{-} c$ and $b = \AdjDiamond_{-} (c \vee d)$.

  Suppose that the adjoint of $\alg{A}$ satisfies $(\Diamond_{+}, \Box_{-})^{*}$, i.e.\ that $\alg{A}$ satisfies $\AdjBox_{-} a \wedge \AdjDiamond_{+} a \leq \bot$ for all $a \in \alg{A}$. Then $a \wedge \AdjDiamond_{+} \Box_{-} a \leq \AdjBox_{-} \Box_{-} a \wedge \AdjDiamond_{+} \Box_{-} a \leq \bot$, hence $\AdjDiamond_{+} \Box_{-} a \leq a \rightarrow \bot$ and $\Box_{-} a \leq \Box_{+} (a \rightarrow \bot)$.

  Conversely, let $x \leq \AdjDiamond_{+} a$ and $x \leq \AdjBox_{-} a$. Then $a \leq \Box_{-} x$, hence $x = x \wedge \AdjDiamond_{+} \Box_{-} x$. Therefore $x \leq \bot$ if and only if $\AdjDiamond_{+} \Box_{-} x \leq x \rightarrow \bot$, that is, if and only if $\Box_{-} x \leq \Box_{+} (x \rightarrow \bot)$.

  Suppose that the adjoint of $\alg{A}$ satisfies $(\Diamond_{-}, \Box_{+})^{*}$, i.e.\ that $\alg{A}$ satisfies $\AdjBox_{+} a \wedge \AdjDiamond_{-} a \leq \bot$ for all $a \in \alg{A}$. Then $a \wedge \AdjDiamond_{-} \Diamond_{+} a \leq \AdjBox_{+} \Diamond_{+} a \wedge \AdjDiamond_{-} \Diamond_{+} a \leq \bot$, hence $\AdjDiamond_{-} \Diamond_{+} a \leq a \rightarrow \bot$ and $\Diamond_{-} (a \rightarrow \bot) \leq \Diamond_{+} a$.

  Conversely, let $x \leq \AdjBox_{+} a$ and $x \leq \AdjDiamond_{-} a$. Then $\Diamond_{+} x \leq a$, hence $x = x \wedge \AdjDiamond_{-} \Diamond_{+} x$. Therefore $x \leq \bot$ if and only if $\AdjDiamond_{-} \Diamond_{+} x \leq x \rightarrow \bot$, that is, if and only if $\Diamond_{-} (x \rightarrow \bot) \leq \Diamond_{+} x$.
\end{proof}

\begin{theorem}[Correspondence] \label{thm:correspondence-for-locality-conditions}
  Each equation $(\Delta_{\alpha},\hskip -0.21em \nabla_{\!\beta})^{*}$ \hskip -0.12em corresponds to the frame condition $(\Delta_{\alpha},\hskip -0.21em \nabla_{\!\beta})^{*}$\!.
\end{theorem}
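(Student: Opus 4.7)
The plan is to reduce the 16 cases (8 forward and 8 backward) to a handful of representatives using the symmetries of Proposition~\ref{prop:symmetries} and Proposition~\ref{prop:backward-locality-conditions}, then verify both directions of correspondence in each remaining case by direct frame--algebra translation.

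First, I would cut down cases. Order duality swaps $(\Delta_{\alpha}, \nabla_{\!\beta})^{*}$ with $(\Delta_{-\alpha}, \nabla_{\!-\beta})^{*}$ simultaneously on the frame side and on the algebra side, so the $-$-polarity versions follow from the $+$-polarity ones. Forward--backward duality handles the backward conditions: by Proposition~\ref{prop:backward-locality-conditions} a backward equation holds in the adjoint algebra iff the corresponding forward equation holds in the original, while Proposition~\ref{prop:symmetries} identifies $\complex{(\converse{\modframe{F}})}$ with $\converse{\complex{\modframe{F}}}$, so the backward frame condition on $\modframe{F}$ becomes the corresponding forward frame condition on $\converse{\modframe{F}}$. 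This leaves the four forward cases $(\Diamond_{+}, \Box_{+})^{*}$, $(\Box_{+}, \Diamond_{+})^{*}$, $(\Diamond_{+}, \Box_{-})^{*}$, and $(\Box_{+}, \Diamond_{-})^{*}$ to check.

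Second, for the left-to-right direction (frame condition implies equation) I unfold the semantic clauses from the complex-algebra definition. For $(\Diamond_{+}, \Box_{+})^{*}$, given $u \in \Box_{+} a \cap \Diamond_{+} b$, pick $v$ with $u R^{\Diamond}_{+} v$ and $v \in b$; the frame condition $R^{\Diamond}_{+} \subseteq (R^{\Diamond}_{+} \cap R^{\Box}_{+}) \circ {\geq}$ supplies $w \geq v$ with $u R^{\Diamond}_{+} w$ and $u R^{\Box}_{+} w$, whence $w \in a$ (since $u \in \Box_{+} a$) and $w \in b$ (since $b$ is an upset and $v \leq w$), giving $u \in \Diamond_{+}(a \wedge b)$. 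The case $(\Box_{+}, \Diamond_{+})^{*}$ is symmetric. For the opposite-polarity cases, $R^{\Diamond}_{+} \subseteq R^{\Box}_{-}$ and $R^{\Box}_{+} \subseteq R^{\Diamond}_{-}$ immediately yield $\Box_{-} a \wedge \Diamond_{+} a \leq \bot$ and $\top \leq \Box_{+} a \vee \Diamond_{-} a$ by unfolding the semantic clauses.

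Third, for the right-to-left direction I refute the equation whenever the frame condition fails by constructing suitable upsets in the complex algebra. For $(\Diamond_{+}, \Box_{+})^{*}$, if $u R^{\Diamond}_{+} v$ but no $w \geq v$ satisfies both $u R^{\Diamond}_{+} w$ and $u R^{\Box}_{+} w$, set $b \assign \set{w \in W}{v \leq w}$ and $a \assign \set{w \in W}{u R^{\Box}_{+} w}$; the latter is a $\leq$-upset by the tonicity of $R^{\Box}_{+}$ in its second coordinate. Then $u \in \Box_{+} a \cap \Diamond_{+} b$ by construction, but any $w$ witnessing $u \in \Diamond_{+}(a \wedge b)$ would give $v \leq w$, $u R^{\Diamond}_{+} w$, and $u R^{\Box}_{+} w$, contradicting the failure assumption. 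For $(\Diamond_{+}, \Box_{-})^{*}$, if $u R^{\Diamond}_{+} v$ but $\lnot u R^{\Box}_{-} v$, take $a \assign \set{w \in W}{\lnot u R^{\Box}_{-} w}$, a $\leq$-upset by the tonicity of $R^{\Box}_{-}$; then $v \in a$ gives $u \in \Diamond_{+} a$, while $u \in \Box_{-} a$ is immediate from the definition of $a$. The remaining two cases are handled analogously. The main recurring delicacy, and the step most likely to bite, is checking that each chosen set is a genuine $\leq$-upset, which in every case comes down to the tonicity built into the definition of a $\Delta_{\alpha}$-relation.
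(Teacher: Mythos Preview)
Your overall strategy matches the paper's: reduce via order duality and forward--backward duality, then verify the surviving forward cases directly. The forward--backward reduction via Proposition~\ref{prop:backward-locality-conditions} and Proposition~\ref{prop:symmetries} is exactly what the paper does, and your concrete verifications for the cases you actually treat are correct.

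There is, however, a genuine slip in your reduction step. In this paper's conventions, order duality swaps $\Box_{\alpha} \leftrightarrow \Diamond_{\alpha}$ while keeping the polarity $\alpha$ fixed; it does \emph{not} swap $+ \leftrightarrow -$. Concretely, the order dual of $(\Diamond_{+}, \Box_{+})^{*}$ is $(\Box_{+}, \Diamond_{+})^{*}$, not $(\Diamond_{-}, \Box_{-})^{*}$. Your four chosen representatives $(\Diamond_{+}, \Box_{+})^{*}$, $(\Box_{+}, \Diamond_{+})^{*}$, $(\Diamond_{+}, \Box_{-})^{*}$, $(\Box_{+}, \Diamond_{-})^{*}$ are therefore two order-dual \emph{pairs}, and you have done redundant work while leaving the four negative-polarity conditions $(\Diamond_{-}, \Box_{-})^{*}$, $(\Box_{-}, \Diamond_{-})^{*}$, $(\Diamond_{-}, \Box_{+})^{*}$, $(\Box_{-}, \Diamond_{+})^{*}$ uncovered by any symmetry you have invoked. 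The paper instead treats $(\Diamond_{\alpha}, \Box_{\alpha})^{*}$ and $(\Diamond_{\alpha}, \Box_{-\alpha})^{*}$ for both values of $\alpha$ simultaneously (using the $\in^{\alpha}$ notation), and then appeals to order duality to obtain the $(\Box_{\alpha}, \ldots)^{*}$ cases. Your arguments translate to the missing cases with only cosmetic changes (replace upsets of $v$ by downsets, swap $\wedge$ and $\vee$ inside the modal operator, etc.), so the fix is routine---but as written, the reduction does not cover all eight forward conditions.
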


\begin{proof}
  The right-to-left direction is left to the reader. In the opposite direction, we start with the forward conditions. We only deal with the cases $(\Diamond_{\alpha}, \Box_{\alpha})^{*}$ and $(\Diamond_{\alpha}, \Box_{-\alpha})^{*}$, the remaining cases being order dual.

  Suppose first that $u \geq v (R^{\Diamond}_{\alpha} \cap R^{\Box}_{\alpha}) w$ but $u (R^{\Diamond}_{\alpha} \cap R^{\Box}_{\alpha}) x$ implies not$^{\alpha}$ $x \geq^{\alpha} w$. Then let $y \in^{\alpha} a$ if and only if $u R^{\Box}_{\alpha} y$ and let $y \in^{\alpha} b$ if and only if $y \geq^{\alpha} w$. It follows that $u \in \Box_{\alpha} a$ and $u \in \Diamond_{\alpha} b$. If $u \in \Diamond_{\alpha} (a \wedge^{\alpha} b)$, then there is some $z \geq^{\alpha} w$ such that $u R^{\Diamond}_{\alpha} z$ and $u R^{\Box}_{\alpha} z$, hence $u (R^{\Diamond}_{\alpha} \cap R^{\Box}_{\alpha}) z$, contradicting the assumption that $u (R^{\Diamond}_{\alpha} \cap R^{\Box}_{\alpha}) z$ implies not$^{\alpha}$ $z \geq^{\alpha} w$.

  Now suppose that $u R^{\Diamond}_{\alpha} v$ but not $u R^{\Box}_{\alpha} v$. Then let $w \in^{-\alpha} a$ if and only if $u R^{\Box}_{-\alpha} w$. It follows that $u \in \Box_{-\alpha} a$ and $u \in \Diamond_{\alpha} a$, contradicting $\Box_{-\alpha} a \wedge \Diamond_{\alpha} a \leq \bot$.

  To deal wih the backward conditions, we appeal to Proposition \ref{prop:backward-locality-conditions}. Consider a backward condition $(\Delta'_{\alpha}, \nabla'_{\!\beta})^{*}$. Let $\Delta'_{\alpha}$ and $\nabla'_{\!\beta}$ be the adjoints of $\Delta_{\alpha}$ and $\nabla_{\!\beta}$, respectively. A modal frame $\modframe{F}$ satisfies the backward condition $(\Delta'_{\alpha}, \nabla'_{\!\beta})^{*}$ if and only if its converse $\converse{\modframe{F}}$ satisfies the forward condition $(\Delta_{\alpha}, \nabla_{\!\beta})^{*}$, i.e.\ by Proposition \ref{prop:symmetries} if and only if the adjoint of the complex algebra $\modframe{F}^{+}$ satisfies the equation $(\Delta_{\alpha}, \nabla_{\!\beta})^{*}$. By Proposition \ref{prop:backward-locality-conditions} this holds if and only if $\modframe{F}^{+}$ satisfies $(\Delta'_{\alpha}, \nabla'_{\!\beta})^{*}$\!.
\end{proof}

  The equations $(\Delta_{\alpha}, \nabla_{\!\beta})^{*}$ imply the corresponding quasi-equations $(\Delta_{\alpha}, \nabla_{\!\beta})$. This is why the conditions $(\Delta_{\alpha}, \nabla_{\!\beta})$ do not appear in the definition of intuitionistic modal algebras and positive modal algebras: they have been made redundant by the conditions $(\Delta_{\alpha}, \nabla_{\!\beta})^{*}$\!.

\begin{proposition} \label{prop:star-is-stronger}
  Let $\Delta'_{\alpha}$ and $\nabla'_{\!\beta}$ be backward modalities adjoint to the forward modalities $\Delta_{\alpha}$ and $\nabla_{\!\beta}$. Then the equation $(\Delta_{\alpha}, \nabla_{\!\beta})^{*}$ implies the quasi-equation $(\Delta_{\alpha}, \nabla_{\!\beta})$, and so does the equation $(\Delta'_{\alpha}, \nabla'_{\!\beta})^{*}$.
\end{proposition}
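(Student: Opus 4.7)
The plan is to handle the two halves separately: the forward half $(\Delta_{\alpha}, \nabla_{\!\beta})^{*} \Rightarrow (\Delta_{\alpha}, \nabla_{\!\beta})$ by a direct algebraic computation, and the backward half $(\Delta'_{\alpha}, \nabla'_{\!\beta})^{*} \Rightarrow (\Delta_{\alpha}, \nabla_{\!\beta})$ by passing to the canonical frame.

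The forward half collapses to a single application of distributivity together with the forward axiom. Taking $(\Diamond_{+}, \Box_{+})$ as a representative, from $\Box_{+} a \wedge \Diamond_{+} b \leq \Diamond_{+}(a \wedge b)$ and the quasi-equation's hypothesis $\Diamond_{+} a \leq \Box_{+} b \vee c$ one computes
\begin{align*}
\Diamond_{+} a = \Diamond_{+} a \wedge (\Box_{+} b \vee c) = (\Diamond_{+} a \wedge \Box_{+} b) \vee (\Diamond_{+} a \wedge c) \leq \Diamond_{+}(a \wedge b) \vee c.
\end{align*}
The opposite-polarity pairs $(\Diamond_{+}, \Box_{-})$ and $(\Box_{+}, \Diamond_{-})$ are handled by the same distributive expansion using the axioms $\Box_{-} a \wedge \Diamond_{+} a \leq \bot$ and $\top \leq \Box_{+} a \vee \Diamond_{-} a$ respectively, and order duality together with Proposition~\ref{prop:symmetries} reduces the twelve pairs to essentially a few.

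For the backward half I would appeal to the canonical embedding of Theorem~\ref{thm:canonical-embedding}. The equations of Figure~\ref{fig:backward-locality-axioms} are Sahlqvist-like in the (bi-)Heyting modal signature and are therefore preserved by canonical extensions, so $\complex{(\canframe{\alg{A}})}$ satisfies $(\Delta'_{\alpha}, \nabla'_{\!\beta})^{*}$ whenever $\alg{A}$ does. Theorem~\ref{thm:correspondence-for-locality-conditions} then transfers this to the corresponding backward frame condition on $\canframe{\alg{A}}$, and the remaining task is to verify that each backward frame condition entails, on a unimodal frame, the frame condition of Theorem~\ref{thm:correspondence} for $(\Delta_{\alpha}, \nabla_{\!\beta})$. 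The quasi-equation then holds in $\complex{(\canframe{\alg{A}})}$ and, since quasi-equations are preserved by subalgebras, also in $\alg{A}$.

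The main obstacle is this last geometric verification, which is a Church--Rosser-style witness construction guided by each backward condition. For instance, for $(\AdjDiamond_{+}, \AdjBox_{+})^{*}$ with frame content ${R \circ {\leq}} \subseteq {{\leq} \circ R}$: given $u R^{\Diamond}_{+} v$, unimodality unfolds this as $u \geq u_{1} R v_{1} \leq v$; the backward condition applied to $u_{1} R v_{1} \leq v$ produces $u_{2} \geq u_{1}$ with $u_{2} R v$; and setting $u' \assign u_{1}$ and $v' \assign v$ discharges all of $u' \leq u$, $v' \geq v$, $u' R^{\Diamond}_{+} v$, $u R^{\Diamond}_{+} v'$, and $u' R^{\Box}_{+} v'$ (the last via $u_{1} \leq u_{2} R v \geq v$). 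Analogous witness constructions handle the remaining seven backward cases.
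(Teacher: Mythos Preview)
Your forward half is exactly the intended ``easy'' argument and is correct. The backward half, however, should be done the same way: a one-line algebraic computation using the Heyting or co-Heyting connective that the backward axiom already presupposes. For instance, $(\AdjDiamond_{+}, \AdjBox_{+})^{*}$, namely $\Diamond_{+} a \rightarrow \Box_{+} b \leq \Box_{+}(a \rightarrow b)$, must imply $(\Box_{+}, \Diamond_{+})$ (not $(\Diamond_{+}, \Box_{+})$ --- note that $\AdjDiamond_{+}$ is adjoint to $\Box_{+}$ and $\AdjBox_{+}$ to $\Diamond_{+}$). From $\Diamond_{+} b \wedge c \leq \Box_{+} a$ one gets $c \leq \Diamond_{+} b \rightarrow \Box_{+} a \leq \Box_{+}(b \rightarrow a)$, hence $\Box_{+}(a \vee b) \wedge c \leq \Box_{+}((a \vee b) \wedge (b \rightarrow a)) \leq \Box_{+} a$. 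The other backward cases are equally short.

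Your detour through the canonical frame has two concrete problems beyond being circuitous. First, in your worked example you aim at the wrong quasi-equation and you use the \emph{unimodal} frame content $R \circ {\leq} \subseteq {\leq} \circ R$ of the full condition $(\AdjDiamond_{+})^{*}$ rather than the multimodal component $R^{\Box}_{+} \subseteq {\leq} \circ (R^{\Box}_{+} \cap R^{\Diamond}_{+})$ that actually corresponds to $(\AdjDiamond_{+}, \AdjBox_{+})^{*}$. Second, and more seriously, you invoke unimodality of the canonical frame (``unimodality unfolds this as $u \geq u_{1} R v_{1} \leq v$''), but unimodality of $\canframe{\alg{A}}$ is exactly what the quasi-equations $(\Delta_{\alpha}, \nabla_{\!\beta})$ deliver via Corollary~\ref{cor:canonical-frame-is-unimodal}, so assuming it here is circular. (The frame route can in fact be repaired by working entirely with the multimodal frame conditions, where $(\AdjDiamond_{+}, \AdjBox_{+})^{*}$ trivially gives $(\Box_{+}, \Diamond_{+})$ with $v' = v$; but then you are also forward-referencing the canonicity result of Theorem~\ref{thm:canonicity-for-locality-conditions}, and the whole machinery is overkill for what is a two-line residuation argument.)
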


\begin{proof}
  We omit the easy proof of this observation.
\end{proof}

  It remains to show that the equations $(\Delta_{\alpha}, \nabla_{\!\beta})^{*}$ are canonical.

\begin{theorem}[Canonicity] \label{thm:canonicity-for-locality-conditions}
  All of the equations $(\Delta_{\alpha}, \nabla_{\!\beta})^{*}$ are canonical.
\end{theorem}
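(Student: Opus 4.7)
The plan is to deduce canonicity from Theorem~\ref{thm:correspondence-for-locality-conditions}: if $\alg{A}$ satisfies the equation $(\Delta_{\alpha}, \nabla_{\!\beta})^{*}$, I will show that $\canframe{\alg{A}}$ satisfies the corresponding frame condition, whence $\complex{(\canframe{\alg{A}})}$ satisfies the equation by correspondence. Symmetries significantly reduce the bookkeeping. Order duality halves the forward cases, and each backward equation reduces to a forward equation on the adjoint via Propositions~\ref{prop:backward-locality-conditions} and~\ref{prop:symmetries}: a tense algebra satisfies a backward equation iff its adjoint satisfies the corresponding forward equation, and taking the canonical frame commutes with passage to the adjoint up to converse of the accessibility relation. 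So it suffices to treat the eight forward equations, which split into two qualitatively different groups.

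The four ``inclusion'' equations $(\Delta_{\alpha}, \nabla_{\!-\alpha})^{*}$ correspond to frame conditions of the form $R^{\Delta}_{\alpha} \subseteq R^{\nabla}_{-\alpha}$ and are immediate from primality. For instance, assuming $\top \leq \Box_{+} a \vee \Diamond_{-} a$ and $\filter{U} R^{\Box}_{+} \filter{V}$ in $\canframe{\alg{A}}$, for any $a \notin \filter{V}$ we have $\Box_{+} a \notin \filter{U}$, and primeness of $\filter{U}$ applied to $\Box_{+} a \vee \Diamond_{-} a \in \filter{U}$ yields $\Diamond_{-} a \in \filter{U}$, confirming $\filter{U} R^{\Diamond}_{-} \filter{V}$. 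The remaining inclusion cases are handled analogously.

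The substantive work lies in the four ``compositional'' equations $(\Delta_{\alpha}, \nabla_{\!\alpha})^{*}$. Taking $(\Diamond_{+}, \Box_{+})^{*}$ as the representative, given $\filter{U} R^{\Diamond}_{+} \filter{V}$ I must exhibit a prime filter $\filter{W} \supseteq \filter{V}$ satisfying both $\Diamond_{+}[\filter{W}] \subseteq \filter{U}$ and $\Box_{+}^{-1}[\filter{U}] \subseteq \filter{W}$. I would apply the prime filter theorem to the filter $\filter{F}$ generated by $\filter{V} \cup \Box_{+}^{-1}[\filter{U}]$ and the ideal $\ideal{I} \assign \set{b \in \alg{A}}{\Diamond_{+} b \notin \filter{U}}$. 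The disjointness $\filter{F} \cap \ideal{I} = \emptyset$ is exactly where the equation enters: any $w \in \filter{F}$ satisfies $w \geq v \wedge a$ for some $v \in \filter{V}$ and some $a$ with $\Box_{+} a \in \filter{U}$, and the equation $\Box_{+} a \wedge \Diamond_{+} b \leq \Diamond_{+}(a \wedge b)$ then yields $\Diamond_{+} w \geq \Box_{+} a \wedge \Diamond_{+} v$, which lies in $\filter{U}$ since $v \in \filter{V}$ and $\filter{U} R^{\Diamond}_{+} \filter{V}$. By order duality this also covers $(\Box_{+}, \Diamond_{+})^{*}$ (with the roles of filter and ideal swapped, using the disjunctive form $\Box_{+}(a \vee b) \leq \Box_{+} a \vee \Diamond_{+} b$ to refute a hypothetical intersection), and the two polarity-$-$ analogues follow by essentially the same construction.

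The main obstacle is isolating, for each compositional equation, the correct filter-ideal pair and recognizing that the equation is just strong enough to force their disjointness. Once that is done, the prime filter theorem delivers the required $\filter{W}$, and the remaining cases follow by the symmetry machinery of Propositions~\ref{prop:backward-locality-conditions} and~\ref{prop:symmetries} together with order duality.
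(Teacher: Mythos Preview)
Your treatment of the eight forward locality equations is correct and essentially the same as the paper's: the inclusion cases $(\Delta_{\alpha}, \nabla_{-\alpha})^{*}$ are immediate from primality, and for the compositional cases $(\Delta_{\alpha}, \nabla_{\alpha})^{*}$ your prime-filter-theorem argument with the filter generated by $\filter{V} \cup \Box_{+}^{-1}[\filter{U}]$ and the ideal $\set{b}{\Diamond_{+} b \notin \filter{U}}$ is a clean repackaging of the paper's Zorn-plus-primality construction.

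There is, however, a genuine gap in your reduction for the backward equations. Proposition~\ref{prop:backward-locality-conditions} only applies to \emph{tense} algebras, and the commutation of canonical frames with adjoints that you invoke likewise presupposes that the adjoint of $\alg{A}$ exists. But the backward locality equations are formulated purely in terms of forward modalities together with Heyting (co-)implication, and the algebras $\alg{A}$ for which canonicity is being asserted are merely appropriate (co-/bi-)Heyting modal algebras, not tense ones. In general such an $\alg{A}$ has no adjoint modalities: a $\Box_{+}$ on $\alg{A}$ admits a left adjoint $\AdjDiamond_{+}$ only when it preserves arbitrary meets, which need not hold. So the chain ``$\alg{A}$ satisfies backward $\Leftrightarrow$ $\converse{\alg{A}}$ satisfies forward $\Rightarrow$ $\complex{(\canframe{\converse{\alg{A}}})}$ satisfies forward $\Leftrightarrow$ $\complex{(\canframe{\alg{A}})}$ satisfies backward'' breaks at the first step, because $\converse{\alg{A}}$ is undefined. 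The symmetry argument works for correspondence (Theorem~\ref{thm:correspondence-for-locality-conditions}) precisely because complex algebras of frames are perfect and hence tense; it does not transfer to canonicity.

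The paper accordingly does \emph{not} reduce the backward cases to the forward ones, but proves them directly: for $(\AdjDiamond_{+}, \AdjBox_{+})^{*}$ one starts from $\filter{U} R^{\Box}_{+} \filter{V}$, extends $\filter{U}$ (not $\filter{V}$) to a maximal filter $\filter{W}$ disjoint from $\Box_{+}[\alg{A} \setminus \filter{V}]$, and uses $\Diamond_{+} a \rightarrow \Box_{+} b \leq \Box_{+}(a \rightarrow b)$ to show $\filter{W} R^{\Diamond}_{+} \filter{V}$. The other backward cases are handled analogously, with the equation in each case supplying exactly the implication needed to close the argument. You will need to supply these direct arguments (or otherwise justify why one may assume $\alg{A}$ is tense, which does not appear to be available).
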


\begin{proof}
  The forward locality axioms are canonical by virtue of being Sahlqvist in the sense of \cite{gehrke+nagahashi+venema05}. However, for the sake of being self-contained we provide a brief proof of this fact. By order duality it suffices to consider the equations $(\Diamond_{\alpha}, \Box_{\alpha})^{*}$ and $(\Diamond_{\alpha}, \Box_{-\alpha})^{*}$. We proceed as in Theorem \ref{thm:canonicity}.

  Suppose that $\filter{U} R^{\Diamond}_{+} \filter{V}$, i.e.\ $\Diamond_{+} [\filter{V}] \subseteq \filter{U}$. Extend $\filter{V}$ to a maximal filter $\filter{W}$ such that $\Diamond_{+} [\filter{W}] \subseteq \filter{U}$. The filter $\filter{W}$ is prime and if $a \notin \filter{W}$, then $\Diamond_{+} (a \wedge w) \notin \filter{U}$ for some $w \in \filter{W}$. But then $\Diamond_{+} w \in \filter{U}$, hence $\Box_{+} a \notin \filter{U}$ because $\Box_{+} a \wedge \Diamond_{+} w \leq \Diamond_{+} (a \wedge w)$. Thus $\filter{U} (R^{\Diamond}_{+} \cap R^{\Box}_{+}) \filter{W} \supseteq \filter{V}$. The proof for $(\Diamond_{-}, \Box_{-})$ entirely is analogous.

  Suppose that $\filter{U} R^{\Diamond}_{\alpha} \filter{V}$, i.e.\ $\Diamond_{\alpha} [\filter{V}] \subseteq \filter{U}$. If $a \in^{\alpha} \filter{V}$, then $\Diamond_{\alpha} a \in \filter{U}$, hence $\Box_{-\alpha} a \notin \filter{U}$ because $\Box_{-\alpha} a \wedge \Diamond_{\alpha} a \leq \bot$. We therefore have $\filter{U} R^{\Box}_{-\alpha} \filter{V}$.

  It remains to deal with the backward locality axioms. By order duality it suffices to consider the equations $(\AdjDiamond_{\alpha}, \AdjBox_{\alpha})^{*}$ and $(\AdjDiamond_{\alpha}, \AdjBox_{-\alpha})^{*}$.

  Suppose that $\filter{U} R^{\Box}_{+} \filter{V}$, i.e.\ $\Box_{+} [\alg{A} \setminus \filter{V}] \subseteq \alg{A} \setminus \filter{U}$. Extend $\filter{U}$ to a maximal filter $\filter{W}$ disjoint from $\Box_{+} [\alg{A} \setminus \filter{V}]$. The filter $\filter{W}$ is prime, $\filter{W} R^{\Box}_{+} \filter{V}$, and if $\Diamond_{+} a \notin \filter{W}$, then $w \wedge \Diamond_{+} a \leq \Box v$ for some $w \in \filter{W}$ and $v \notin \filter{V}$. Thus $w \leq \Diamond_{+} a \rightarrow \Box_{+} v \leq \Box_{+} (a \rightarrow v)$, so $a \rightarrow v \in \filter{V}$ and $a \notin \filter{V}$, showing that $\filter{U} \subseteq \filter{W} (R^{\Box}_{+} \cap R^{\Diamond}_{+}) \filter{V}$.

  Suppose that $\filter{U} R^{\Diamond}_{-} \filter{V}$, i.e.\ $\Diamond_{-} [\alg{A} \setminus \filter{V}] \subseteq \filter{U}$. Extend $\alg{A} \setminus \filter{U}$ to a maximal ideal $\ideal{I}$ disjoint from $\Diamond_{-} [\alg{A} \setminus \filter{V}]$, and let $\filter{W} = \alg{A} \setminus \ideal{I}$. The filter $\filter{W}$ is prime, $\filter{W} R^{\Diamond}_{-} \filter{V}$, and if $\Box_{-} a \in \filter{W}$, then $\Diamond_{-} v \leq w \vee \Box_{-} a$ for some $v \notin \filter{V}$ and $w \notin \filter{W}$. Thus $\Diamond_{-} (a \rightarrow v) \leq \Diamond_{-} v \minus \Box_{-} a \leq w \notin \filter{W}$, so $a \rightarrow v \in \filter{V}$ and $a \notin \filter{V}$, showing that $\filter{U} \supseteq \filter{W} (R^{\Diamond}_{-} \cap R^{\Box}_{-}) \filter{U}$.

  Suppose that $\filter{U} R^{\Box}_{+} \filter{V}$, i.e.\ $\Box_{+} [\alg{A} \setminus \filter{V}] \subseteq \alg{A} \setminus \filter{U}$, and $a \in \filter{V}$. If $\Box_{-} a \in \filter{U}$, then we have $\Box_{+} (a \rightarrow \bot) \in \filter{U}$ because $\Box_{-} a \leq \Box_{+} (a \rightarrow \bot)$, so $a \rightarrow \bot \in \filter{V}$ and $\bot \in \filter{V}$. But $\bot \notin \filter{V}$, hence $\Box_{-} a \notin \filter{U}$ and $\filter{U} R^{\Box}_{-} \filter{V}$.

  Suppose that $\filter{U} R^{\Diamond}_{-} \filter{V}$, i.e.\ $\Diamond_{-}[\alg{A} \setminus \filter{V}] \subseteq \filter{U}$, and $a \in \filter{V}$. If $\Diamond_{+} a \notin \filter{U}$, then we have $\Diamond_{-} (a \rightarrow \bot) \notin \filter{U}$ because $\Diamond_{-} (a \rightarrow \bot) \leq \Diamond_{+} a$, so $a \rightarrow \bot \in \filter{V}$ and $\bot \in \filter{V}$. But $\bot \notin \filter{V}$, hence $\Diamond_{+} a \in \filter{U}$ and $\filter{U} R^{\Diamond}_{+} \filter{V}$.
\end{proof}

  We call an $\lang{L}$-unimodal algebra \emph{appropriate} for $(\Delta_{\alpha})^{*}$ if it is appropriate for $(\Delta_{\alpha}, \nabla_{\alpha})^{*}$ in case $\nabla_{\alpha} \in \lang{L}$ and also appropriate for $(\Delta_{\alpha}, \nabla_{-\alpha})^{*}$ in case $\nabla_{-\alpha} \in \lang{L}$. We say that an $\lang{L}$-modal algebra appropriate for $(\Delta_{\alpha})^{*}$ satisfies $(\Delta_{\alpha})^{*}$ if it satisfies $(\Delta_{\alpha}, \nabla_{\alpha})^{*}$ in case $\nabla_{\alpha} \in \lang{L}$ and also satisfies $(\Delta_{\alpha}, \nabla_{-\alpha})^{*}$ in case $\nabla_{-\alpha} \in \lang{L}$. Given a tense unimodal signature $\lang{L}'$, an \emph{$\lang{L}'$-local} $\lang{L}$-unimodal algebra is an $\lang{L}$-unimodal algebra appropriate for $(\Delta_{\alpha})^{*}$ for each $\Delta_{\alpha} \in \lang{L}'$ which satisfies each such $(\Delta_{\alpha})^{*}$. Similarly, an \emph{$\lang{L}'$-local $\lang{L}$-unimodal frame} is an $\lang{L}$-unimodal frame which satisfies $(\Delta_{\alpha})^{*}$ for each $\Delta_{\alpha} \in \lang{L}'$.

  Combining the canonicity and correspondence results of the last two sections now yields the following Kripke completeness theorem for the logic of $\lang{L}$-unimodal frames satisfying any combination of the forward and backward locality conditions, provided that the appropriate modalities are present in $\lang{L}$.

\begin{theorem}[Kripke completeness]
  The quasivariety of $\lang{L}'$-local $\lang{L}$-unimodal algebras is generated as a universal class by the class of all complex algebras of $\lang{L}'$-local $\lang{L}$-unimodal frames.
\end{theorem}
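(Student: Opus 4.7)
The plan is to combine the canonical embedding of Theorem~\ref{thm:canonical-embedding} with the canonicity and correspondence results of the previous two sections. Since universal sentences that fail in a substructure also fail in any superstructure containing it, it suffices to show that every $\lang{L}'$-local $\lang{L}$-unimodal algebra $\alg{A}$ embeds into the complex algebra of some $\lang{L}'$-local $\lang{L}$-unimodal frame. The natural candidate is $\complex{(\canframe{\alg{A}})}$, with embedding $\unit_{\alg{A}}$ supplied by Theorem~\ref{thm:canonical-embedding}.

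Verifying that $\canframe{\alg{A}}$ is an $\lang{L}'$-local $\lang{L}$-unimodal frame proceeds in two stages. First, $\canframe{\alg{A}}$ is an $\lang{L}$-unimodal frame by Corollary~\ref{cor:canonical-frame-is-unimodal}. Second, for each $\Delta_{\alpha} \in \lang{L}'$, Proposition~\ref{prop:analysis} reduces the locality condition $(\Delta_{\alpha})^{*}$ on a unimodal frame to the conjunction of those pair-wise conditions $(\Delta_{\alpha}, \nabla_{\!\beta})^{*}$ appropriate to the signature $\lang{L}$. The algebra $\alg{A}$ satisfies each of the corresponding equations by hypothesis, so by the canonicity Theorem~\ref{thm:canonicity-for-locality-conditions} the pair-wise frame conditions hold in $\canframe{\alg{A}}$; reassembling them via Proposition~\ref{prop:analysis} yields $(\Delta_{\alpha})^{*}$, as required.

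The complex algebra $\complex{(\canframe{\alg{A}})}$ is itself an $\lang{L}'$-local $\lang{L}$-unimodal algebra by the correspondence Theorems~\ref{thm:correspondence} and~\ref{thm:correspondence-for-locality-conditions}, noting that a complex algebra is automatically a bi-Heyting algebra and hence meets any appropriateness requirement imposed by the locality axioms. This closes the loop: $\unit_{\alg{A}}$ embeds $\alg{A}$ into an algebra belonging to the claimed generating class, so every universal sentence true in all such complex algebras is also true in $\alg{A}$. The converse inclusion---that every complex algebra of an $\lang{L}'$-local $\lang{L}$-unimodal frame lies in the quasivariety---is the correspondence direction just invoked.

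All of the substantive work is in fact carried out by the canonicity, correspondence, and appropriateness machinery of the previous two sections; the argument here is essentially bookkeeping. The one point that needs mild care is the passage, for each $\Delta_{\alpha} \in \lang{L}'$, from the pair-wise frame conditions $(\Delta_{\alpha}, \nabla_{\!\beta})^{*}$ back to the single global locality condition $(\Delta_{\alpha})^{*}$ that was used to \emph{define} $\lang{L}'$-locality on frames. This passage is exactly what Proposition~\ref{prop:analysis} licenses, and once it is applied the completeness theorem falls out as an immediate corollary of the canonical embedding.
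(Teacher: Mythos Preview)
Your proposal is correct and follows essentially the same approach as the paper, which gives no explicit proof and simply states that the theorem follows by combining the canonicity and correspondence results of the previous two sections; your write-up is a faithful and detailed unpacking of that one-line justification.
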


   Let us now consider an example to illustrate how to obtain an axiomatization of such a logic from the menu of axioms presented above.

\begin{example} \label{example}
  Suppose that we want to axiomatize the quasivariety of Heyting $\lang{L}$-modal algebras generated by the complex algebras of all $\lang{L}$-unimodal frames for $\lang{L} = \{ \Box_{+}, \Diamond_{+}, \Box_{-} \}$ which satisfy the conditions
\begin{align*}
  {\leq} \circ {R} & \subseteq {R} \circ {\geq}, \\
  {R} \circ {\leq} & \subseteq {\leq} \circ {R}.
\end{align*}
  Looking at our list of locality conditions, we see that these are the conditions $(\Box_{-})^{*}$ and $(\AdjDiamond_{+})^{*}$, therefore we first need to check that $\Box_{-} \in \lang{L}$ and $\AdjDiamond_{+} \in \converse{\lang{L}}$, i.e.\ $\Box_{+} \in \lang{L}$. The next step is to inspect Figures~\ref{fig:forward-locality-axioms} and~\ref{fig:backward-locality-axioms} to find all conditions of the form $(\Box_{-}, \Delta_{\alpha})^{*}$ for $\Delta_{\alpha} \in \lang{L}$ and $(\AdjDiamond_{+}, \nabla_{\!\beta})^{*}$ for $\nabla_{\!\beta} \in \converse{L}$. This yields the conditions $(\Box_{-}, \Diamond_{+})^{*}$ and $(\AdjDiamond_{+}, \AdjBox_{+})^{*}$ and $(\AdjDiamond_{+}, \AdjBox_{-})^{*}$. The conditions $(\AdjDiamond_{+}, \AdjBox_{+})^{*}$ and $(\AdjDiamond_{+}, \AdjBox_{-})^{*}$ require the Heyting implication, therefore we need to check that Heyting implication is present in our signature.

  The locality axioms $(\Box_{-}, \Diamond_{+})^{*}$ and $(\AdjDiamond_{+}, \AdjBox_{+})^{*}$ and $(\AdjDiamond_{+}, \AdjBox_{-})^{*}$ imply the ordinary axioms $(\Box_{-}, \Diamond_{+})$ and $(\Box_{+}, \Diamond_{+})$ and $(\Box_{+}, \Box_{-})$, therefore the latter do not need to be part of our axiomatization. However, every other condition $(\Delta_{\alpha}, \nabla_{\!\beta})$ for $\Delta_{\alpha}, \nabla_{\!\beta} \in \lang{L}$ will be part of our axiomatization. In particular, these are the conditions $(\Box_{-}, \Box_{+})$ and $(\Diamond_{+}, \Box_{+})$ and $(\Diamond_{+}, \Box_{-})$. We also recall that the condition $(\Diamond_{+}, \Box_{+})$ may be stated equationally if Heyting implication is part of the signature.

  We thus obtain an axiomatization which contains the three locality equations $(\Box_{-}, \Diamond_{+})^{*}$ and $(\AdjDiamond_{+}, \AdjBox_{+})^{*}$ and $(\AdjDiamond_{+}, \AdjBox_{-})^{*}$, the two equations $(\Box_{-}, \Box_{+})$ and $(\Diamond_{+}, \Box_{+})$, and the quasiequation $(\Diamond_{+}, \Box_{-})$. Had we moreover imposed the condition $(\Diamond_{+})^{*}$, the equation $(\Diamond_{+}, \Box_{+})$ and the quasiequation $(\Diamond_{+}, \Box_{-})$ would be replaced by the equations $(\Diamond_{+}, \Box_{+})^{*}$ and $(\Diamond_{+}, \Box_{-})^{*}$.
\end{example}

  Finally, we characterize the $\lang{L}$-modal algebras which are the negation-free subreducts of Boolean algebras with an operator. In the case of $\lang{L} = \{ \Box_{+}, \Diamond_{+} \}$ these are precisely the positive modal algebras of Dunn~\cite{dunn95}.

\begin{definition}
  A Boolean $\lang{L}$-modal algebra is an $\lang{L}$-modal algebra expanded by a Boolean negation $\neg$ which satisfies all applicable equations from the following (redunant) list:
\begin{align*}
  \Box_{+} a & = \Box_{-} \neg a, & \Box_{+} a & = \neg \Diamond_{+} \neg a, & \Box_{+} a & = \neg \Diamond_{-} a, \\
  \Box_{-} a & = \Box_{+} \neg a, & \Box_{-} a & = \neg \Diamond_{+} a, & \Box_{-} a & = \neg \Diamond_{-} \neg a, \\
  \Diamond_{+} a & = \Diamond_{-} \neg a, & \Diamond_{+} a & = \neg \Box_{+} \neg a, & \Diamond_{+} a & = \neg \Box_{-} a, \\
  \Diamond_{-} a & = \Diamond_{+} \neg a, & \Diamond_{-} a & = \neg \Box_{+} a, & \Diamond_{-} a & = \neg \Box_{-} \neg a.
\end{align*}
\end{definition}

  Because all modalities are interdefinable, Boolean $\lang{L}$-modal algebras for any choice of $\lang{L}$ are termwise equivalent to ordinary Boolean algebras with a modal operation, i.e.\ to Boolean algebras with a $\Box_{+}$ operator.

\begin{definition}
  A \emph{positive $\lang{L}$-modal algebra} is an $\lang{L}$-modal algebra which satisfies the forward condition $(\Delta_{\alpha})^{*}$ for each $\Delta_{\alpha} \in \lang{L}$.
\end{definition}

  By Proposition~\ref{prop:star-is-stronger} positive $\lang{L}$-modal algebras are $\lang{L}$-unimodal algebras.

\begin{theorem}[Positive modal algebras] \label{thm:positive-modal-logic}
  Let $\alg{A}$ be an $\lang{L}$-modal algebra. Then $\alg{A}$ is a subreduct of a Boolean $\lang{L}$-modal algebra if and only if $\alg{A}$ is a positive $\lang{L}$-modal algebra.
\end{theorem}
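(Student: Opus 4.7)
The plan is to prove the two directions separately, with the easier one being that every Boolean $\lang{L}$-modal algebra is already a positive $\lang{L}$-modal algebra. Since the forward locality axioms $(\Delta_{\alpha})^{*}$ in Figure \ref{fig:forward-locality-axioms} are genuine equations in the signature of $\lang{L}$-modal algebras (no Heyting connectives appear), they are inherited by arbitrary subreducts, so it suffices to verify them directly in a Boolean $\lang{L}$-modal algebra. Each check is a routine Boolean computation: for $\Box_{+} a \wedge \Diamond_{+} b \leq \Diamond_{+}(a \wedge b)$, rewrite $\Diamond_{+} b$ as $\neg \Box_{+} \neg b$, use that $\Box_{+}$ preserves meets to get $\Box_{+} a \wedge \Box_{+}(\neg a \vee \neg b) = \Box_{+}(a \wedge \neg b) \leq \Box_{+} \neg b$, and conclude the left-hand side is $\leq \bot$ after negation; for $\Box_{-} a \wedge \Diamond_{+} a \leq \bot$ one uses $\Box_{-} a = \neg \Diamond_{+} a$ directly; the remaining six cases follow similarly from the interdefinability equations.

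For the nontrivial direction, let $\alg{A}$ be a positive $\lang{L}$-modal algebra. By the observation immediately after the definition (Proposition \ref{prop:star-is-stronger}), $\alg{A}$ is in particular an $\lang{L}$-unimodal algebra, so Corollary \ref{cor:canonical-frame-is-unimodal} yields that the canonical frame $\canframe{\alg{A}}$ is $\lang{L}$-unimodal; write $R \assign \bigcap_{\Delta_{\alpha} \in \lang{L}} R^{\Delta}_{\alpha}$ for its generating binary relation. Theorem \ref{thm:canonicity-for-locality-conditions} further guarantees that $\canframe{\alg{A}}$ satisfies the forward locality condition $(\Delta_{\alpha})^{*}$ for each $\Delta_{\alpha} \in \lang{L}$. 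I now introduce $\alg{B}$, the Boolean $\lang{L}$-modal algebra whose underlying Boolean algebra is the full powerset of the universe $W$ of $\canframe{\alg{A}}$ and whose modal operators are defined locally from the single relation $R$ in the obvious way, e.g.\ $\Box_{+} X \assign \set{u \in W}{uRv \text{ implies } v \in X}$ and the other three analogously. A short check shows that these four operators satisfy all the applicable interdefinability equations from the definition of a Boolean $\lang{L}$-modal algebra, so $\alg{B}$ is indeed such an algebra.

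The proposed embedding of $\alg{A}$ into $\alg{B}$ is the composition $\iota \circ \unit_{\alg{A}}$, where $\unit_{\alg{A}} \colon \alg{A} \to \complex{(\canframe{\alg{A}})}$ is the canonical embedding of Theorem \ref{thm:canonical-embedding} and $\iota$ is the set-theoretic inclusion of upsets of $\canframe{\alg{A}}$ into the powerset of $W$. Lattice homomorphy and injectivity of $\iota$ are clear, so the whole theorem reduces to showing that $\iota$ commutes with each modal operator in $\lang{L}$; this is the main obstacle, and it is exactly what the machinery of Section \ref{sec:locality} was set up to dispense with. Concretely, Proposition \ref{prop:local-modalities} says that on a frame satisfying $(\Delta_{\alpha})^{*}$ the operator $\Delta_{\alpha}$ acting on upsets via the monotone relation $R^{\Delta}_{\alpha}$ coincides pointwise with the operator acting via local evaluation along $R$; applied to $\canframe{\alg{A}}$ this gives precisely the equality needed on each upset $\unit_{\alg{A}}(a)$. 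Hence $\iota \circ \unit_{\alg{A}}$ is an embedding of $\lang{L}$-modal algebras, which exhibits $\alg{A}$ as a subreduct of the Boolean $\lang{L}$-modal algebra $\alg{B}$, completing the proof.
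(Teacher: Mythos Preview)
Your proof is correct and follows essentially the same route as the paper's own argument: for the nontrivial direction you pass to the canonical frame, use canonicity of the locality axioms (together with the observation that positive implies unimodal) to see that this frame is unimodal and satisfies all the forward locality conditions, build the Boolean $\lang{L}$-modal algebra $\alg{B}$ on the full powerset with operators defined locally from the single relation $R$, and then invoke Proposition~\ref{prop:local-modalities} to identify the complex-algebra modalities on upsets with their local versions, so that the canonical embedding followed by the inclusion of upsets into the powerset is an $\lang{L}$-modal embedding. This is precisely the paper's argument, only spelled out a bit more explicitly.
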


\begin{proof}
  For the left-to-right direction, it suffices to verify that the forward locality equation $(\Delta_{\alpha})^{*}$ holds in each $\lang{L}$-modal algebra for $\Delta_{\alpha} \in \lang{L}$ and the backward locality equation $(\nabla_{\!\beta})^{*}$ holds in each tense $\lang{L}$-modal algebra for $\nabla_{\!\beta} \in \lang{L}$. Right-to-left, let $\alg{A}$ be a positive (tense) $\lang{L}$-modal algebra. By Theorem \ref{thm:canonicity-for-locality-conditions} its canonical extension $\complex{\canframe{\alg{A}}}$ is also a positive (tense) $\lang{L}$-modal algebra which contains $\alg{A}$ as a subalgebra. In particular, $\canframe{\alg{A}}$ is an $\lang{L}$-unimodal frame with $R = \bigcap_{\Delta_{\alpha} \in \lang{L}} R^{\Delta}_{\alpha}$. Let $\alg{B}$ be the expansion of the Boolean algebra of all subsets of $\canframe{\alg{A}}$ (not necessarily upsets) by the following operations for the modalities in $\lang{L}$:
\begin{align}
  \Box_{+} a & = \set{u \in W}{u R v \text{ implies } v \in a} \notag\\
  \Box_{-} a & = \set{u \in W}{u R v \text{ implies } v \notin a} \notag\\
  \Diamond_{+} a & = \set{u \in W}{u R v \text{ for some } v \in a} \notag\\
  \Diamond_{-} a & = \set{u \in W}{u R v \text{ for some } v \notin a} \notag
\end{align}
In case $\alg{A}$ is a tense $\lang{L}$-modal algebra, we also need to expand $\canframe{\alg{A}}$ by the appropriate backward modalities. Propositions \ref{prop:local-modalities} and \ref{prop:backward-local-modalities} now imply that $\complex{\canframe{\alg{A}}}$ is a subalgebra of $\alg{B}$.
\end{proof}

\end{document}